\long\def\symbolfootnote[#1]#2{\begingroup%
\def\thefootnote{\fnsymbol{footnote}}\footnote[#1]{#2}\endgroup}
\newtheorem{theorem}{Theorem}[section]
\newtheorem{lemma}{Lemma}[section]
\theoremstyle{definition} 
\newtheorem{definition}{Definition}[section]
\newtheorem{remark}{Remark}[section]
\numberwithin{equation}{section} 
\begin{document}

\title{Sequential Detection of Three-Dimensional Signals under Dependent Noise}

\date{}

\author{Annabel Prause and Ansgar Steland \\ \ \\  Institute of Statistics \\ RWTH Aachen University
}

\maketitle

\symbolfootnote[0]{\normalsize Address correspondence to Annabel Prause or Ansgar Steland,
Institute of Statistics, RWTH Aachen University, W\"ullnerstra{\ss}e 3, 52062 Aachen, Germany; E-mail: prause@stochastik.rwth-aachen.de or steland@stochastik.rwth-aachen.de}
\symbolfootnote[0]{\normalsize This is a preprint version of an article to appear in {\em Sequential Analysis}.}

\begin{abstract}
{\small \noindent\textbf{Abstract:} We study detection methods for multivariable signals under dependent noise. The main focus is  on three-dimensional signals, i.e.\ on signals in the space-time domain. Examples for such signals are multifaceted. They include geographic and climatic data as well as image data, that are observed over a fixed time horizon.
We assume that the signal is observed as a finite block of noisy samples whereby we are interested in detecting changes from a given reference signal.
Our detector statistic is based on a sequential partial sum process, related to classical signal decomposition and reconstruction approaches applied to the sampled signal. 
We show that this detector process converges weakly under the no change null hypothesis that the signal coincides with the reference signal, provided that the spatial-temporal partial sum process associated to the random field of the noise terms disturbing the sampled signal converges to a Brownian motion. 
More generally, we also establish the limiting distribution under a wide class of local alternatives that allows for smooth as well as discontinuous changes. 
Our results also cover extensions to the case that the reference signal is unknown.
We conclude with an extensive simulation study of the detection algorithm.} 
\\ \ \\
{\small \noindent\textbf{Keywords:} Change-point problems; Correlated noise random fields; Image processing; Multivariate Brownian motion; Sampling theorems; Sequential detection. }
\end{abstract}


\section{Introduction} 

Signal processing and signal transmission play an important role in many different areas. Typical problems include the reconstruction of a signal by its discretely sampled values as well as the detection of changes from a given reference signal. 
For univariate signals $f\colon\mathbb{R}\to\mathbb{R}$, sampled equidistantly using a sampling period $\tau>0$ and disturbed by additive noise, such that one obtains a block of noisy samples $\{y_{\bm i}: i\in\{1,\ldots,n\}\}$ where
\begin{align}\label{basicmodelpreface}
y_{\bm i}=f\left(i\tau\right)+\varepsilon_{\bm i},
\end{align}
a nonparametric joint reconstruction/detection algorithm has been proposed in the paper of \cite{PawlakSteland}.
Their approach has several appealing features. Firstly, the algorithm can detect changes while reconstructing the signal at the same time. Secondly, it is a nonparametric approach, i.e.\ no further information about the exact class to which the observed signal belongs is necessary. Lastly, the procedure works in a sequential way such that changes can be detected on-line, in contrast to off-line detection schemes which can first detect changes in retrospect, i.e.\ when the whole data set is already available.

A natural question arises whether this approach also works for high-dimensional signals. 
One answer to this problem is given in \cite{PrauseSteland}, where the authors treat matrix-valued signals and apply results from \cite{PawlakSteland} by considering quadratic forms.
In the present paper, however, we consider a more general framework by focusing our attention on signals $f\colon\mathbb{R}^q\to\mathbb{R}$ for $q>1$. Examples for such signals are multifaceted, including geographic and climatic data as well as image data, that are observed over a fixed time horizon. 
In order to simplify the notation we fix $q=3$ as this case also covers the most interesting applications. However, our results also hold true for $q=2$ and arbitrary $q>3$ and the corresponding proofs can easily be completed along the same lines. Thus, in the following 
we are interested in reconstructing three-dimensional signals and, at the same time, in detecting changes from a given reference signal. Here, one component represents the time and the other two the location. The application that we have in mind are video signals, i.e.\ sequences of image frames over time.

The basis on which we now want to establish our investigations is a finite block of noisy samples $\{y_{\bm{i}}=y_{i_1,i_2,i_3}: (i_1,i_2,i_3)\in\{1,\ldots,n_1\}\times\{1,\ldots,n_2\}\times\{1,\ldots,n_3\}\}$ that -- in accordance with model (\ref{basicmodelpreface}) -- is obtained from the model
\begin{align}\label{modelmvb}
y_{i_1,i_2,i_3}=f\left(i_1\tau_1,i_2\tau_2,i_3\tau_3\right)+\varepsilon_{i_1,i_2,i_3}\,.
\end{align}
Here, $f(t,r_2,r_3)$ is the unknown signal depending on time ($i_1$) and location ($i_2$ and $i_3$), $\left\{\varepsilon_{\bm{i}}=\varepsilon_{i_1,i_2,i_3}\right\}$ is a zero mean random field and $\tau_j=\tau_{n_j},\,j=1,2,3$, are the sampling periods. We assume that they fulfill $\tau_j\to 0$ and $n_j\tau_j\to\overline{\tau}_j$, $j=1,2,3$ as $\min_{1\leq i\leq 3}n_i\to\infty$.
As in \cite{PawlakSteland} we want to base our approaches on classical reconstruction procedures from the signal sampling theory, leading to sequential partial sum processes as detector statistics, see Section \ref{DetAlg}. In order to make these detector statistics applicable we need to determine proper control limits; thus, in Section \ref{LimDist} we will show that we can generalize the two main weak convergence results in \cite{PawlakSteland} to our multidimensional context, i.e.\ we show weak convergence of the detection process towards Gaussian processes under different assumptions on the dependence structure of the noise processes where either the null hypothesis $f=f_0$ or the alternative $f\neq f_0$ holds true. In Section \ref{Ext} we present extensions to weighting functions, which allow to detect the location of the change as well, and discuss how to treat the case of an unknown but time-constant reference signal. 
Finally, in Section \ref{sim} we present some simulation results concerning the rejection rates and the power of the detection algorithm.

\section{The detection algorithm}\label{DetAlg}
We now want to extend the main results of \cite{PawlakSteland} to signals $f\colon\mathbb{R}^q\to\mathbb{R}$ with $q=3$. 
%
As in \cite{PawlakSteland} we base our estimator of $f(t,r_2,r_3)$ on results of the signal sampling theory like the Shannon-Whittaker theorem. This theorem has generalizations to signals with several variables. In three dimensions we have for band-limited functions on $[-\Omega_1,\Omega_1]\times[-\Omega_2,\Omega_2]\times[-\Omega_3,\Omega_3]$ with $0<\Omega_1,\Omega_2,\Omega_3<\infty$ the representation
\begin{align*}
f(t,r_2,r_3)=&\sum_{i_1=-\infty}^{\infty}\sum_{i_2=-\infty}^{\infty}\sum_{i_3=-\infty}^{\infty}f\left(\frac{i_1\pi}{\Omega_1},\frac{i_2\pi}{\Omega_2},\frac{i_3\pi}{\Omega_3}\right)\\&\ \ \textnormal{sinc}\left(\Omega_1\left(t-\frac{i_1\pi}{\Omega_1}\right)\right)\textnormal{sinc}\left(\Omega_2\left(r_2-\frac{i_2\pi}{\Omega_2}\right)\right)\textnormal{sinc}\left(\Omega_3\left(r_3-\frac{i_3\pi}{\Omega_3}\right)\right),
\end{align*}
where $\textnormal{sinc}(x)=\frac{\sin(x)}{x}$, cf.\ \cite{Jerri}, p.~1571. 
The most direct idea to construct an estimator of $f(t,r_2,r_3)$ would now be to just replace the values of $f$ at $\left(i_1\pi/\Omega_1,i_2\pi/\Omega_2,i_3\pi/\Omega_3\right)$ by the noisy observations $y_{\bm{i}}=y_{i_1,i_2,i_3}$. However, \cite{PawlakStadtmueller} have shown that this naive estimator is not even consistent in one dimension. Instead, they propose a post-filtering correction of the so-called oversampled version of the Shannon-Whittaker series to filter out high frequencies. This is the approach that we also adapt here. In three dimensions this oversampled version is of the form
\begin{align*}
f(t,r_2,r_3)=&\sum_{i_1=-\infty}^{\infty}\sum_{i_2=-\infty}^{\infty}\sum_{i_3=-\infty}^{\infty}f\left(i_1\tau_1,i_2\tau_2,i_3\tau_3\right)\\&\ \ \textnormal{sinc}(\pi\tau_1^{-1}(t-i_1\tau_1))\textnormal{sinc}(\pi\tau_2^{-1}(r_2-i_2\tau_2))\textnormal{sinc}(\pi\tau_3^{-1}(r_3-i_3\tau_3))
\end{align*}
for $0<\tau_j\leq\pi/\Omega_j, j=1,2,3$. If we convolve this version with
\begin{align*}
g(t,r_2,r_3)=\frac{\Omega_1\Omega_2\Omega_3}{\pi^3}\textnormal{sinc}(\Omega_1 t)\textnormal{sinc}(\Omega_2 r_2)\textnormal{sinc}(\Omega_3 r_3),
\end{align*}
use the fact that
\begin{align*}
&\quad\ \textnormal{sinc}(\pi\tau_1^{-1}(t-i_1\tau_1))\textnormal{sinc}(\pi\tau_2^{-1}(r_2-i_2\tau_2))\textnormal{sinc}(\pi\tau_3^{-1}(r_3-i_3\tau_3))\ast g(t,r_2,r_3)\\&
=\tau_1\tau_2\tau_3\frac{\Omega_1\Omega_2\Omega_3}{\pi^3} \textnormal{sinc}(\Omega_1(t-i_1\tau_1))\textnormal{sinc}(\Omega_2(r_2-i_1\tau_2))\textnormal{sinc}(\Omega_3(r_3-i_1\tau_3)),
\end{align*}
and replace $f\left(i_1\tau_1,i_2\tau_2,i_3\tau_3\right)$ by the noisy sample $y_{i_1,i_2,i_3}$,
this finally leads to the following truncated convolution form as an estimator for $f(t,r_2,r_3),$ namely
\begin{align*}
\widehat{f}_{n_1,n_2,n_3}(t,r_2,r_3)\coloneqq\tau_1\tau_2\tau_3
\sum_{i_1=1}^{n_1}\sum_{i_2=1}^{n_2}\sum_{i_3=1}^{n_3}y_{i_1,i_2,i_3}\varphi\left(t-i_1\tau_1,r_2-i_2\tau_2,r_3-i_3\tau_3\right),
\end{align*}
cf.\ \cite{PawlakStadtmueller}, p.~1427, and \cite{PawlakStadtmueller2}, p.~2527.
Here,
\begin{align*}
\varphi(t,r_2,r_3)=\frac{\sin(\Omega_1 t)}{\pi t}\frac{\sin(\Omega_2 r_2)}{\pi r_2}\frac{\sin(\Omega_3 r_3)}{\pi r_3}
\eqqcolon\widetilde{\varphi}_1(t)\widetilde{\varphi}_2(r_2)\widetilde{\varphi}_3(r_3)
\end{align*}
is a three-dimensional product reconstruction kernel with $\widetilde{\varphi}_j(0)=\Omega_j/\pi$ for $j=1,2,3$.

Given a reference signal $f_0(t,r_2,r_3)$, our aim now is to decide whether or not we can reject the null hypothesis 
\[
H_0: f(t,r_2,r_3)=f_0(t,r_2,r_3)
\]
for all $t\in[0,\overline{\tau}_1], r_2\in[0,\overline{\tau}_2], r_3\in[0,\overline{\tau}_3]$. As we receive our data in a sequential way over time as a sequence of image frames, we want to be able to detect changes as early as possible, i.e.\ we want to give an alarm as soon as we have enough evidence in our samples 
\[
\left\{y_{i_1,i_2,i_3}: i_1\in\{1,\ldots,k\},i_2\in\{1,\ldots,n_2\},i_3\in\{1,\ldots,n_3\}\right\}, 
\]
corresponding to the first $k$ image frames, to reject the null hypothesis. To achieve this aim we consider a sequential partial sum process over time which is defined as
\begin{align}\label{detectionprocess}
\mathcal{F}_n(s,t,r_2,r_3)&\coloneqq(\tau_1\tau_2\tau_3)^{-1/2}(\widehat{f}_{\left\lfloor n_1s \right\rfloor}(t,r_2,r_3)-\mathbb{E}_0\widehat{f}_{\left\lfloor n_1s \right\rfloor}(t,r_2,r_3))\notag\\&
=\sqrt{\tau_1\tau_2\tau_3}
\sum_{l_1=1}^{\lfloor n_1s\rfloor}\sum_{l_2=1}^{n_2}\sum_{l_3=1}^{n_3}\left[y_{l_1,l_2,l_3}-f_0(l_1\tau_1,l_2\tau_2,l_3\tau_3)\right]\notag\\&
\hspace{4.3cm}\varphi\left(t-l_1\tau_1,r_2-l_2\tau_2,r_3-l_3\tau_3\right),
\end{align}
for $0<s_0\leq s\leq 1, t\in[0,\overline{\tau}_1], r_2\in[0,\overline{\tau}_2], r_3\in[0,\overline{\tau}_3]$. The subscript $n$ denotes the dependence on the sample sizes. Note that $\mathbb E_0$ denotes as usual the expectation taken under the null hypothesis.
With this process we can easily define detectors such as the local detector
\begin{align}\label{locmvb}
\mathcal L_n\coloneqq
	\min\left\{n_0\leq k\leq n_1: \max_{r_2\in[0,\overline{\tau}_2],\atop r_3\in[0,\overline{\tau}_3]}\left|\mathcal{F}_n\left(\frac{k}{n_1},\frac{\overline{\tau}_1k}{n_1},r_2,r_3\right)\right|>c_L\right\}
\end{align}
or the global maximum norm detector
\begin{align}\label{globmvb}
\mathcal M_n\coloneqq\min\left\{n_0\leq k\leq n_1: \max_{0\leq t\leq \overline{\tau}_1 k/n_1} \max_{r_2\in[0,\overline{\tau}_2],\atop r_3\in[0,\overline{\tau}_3]}\left|\mathcal{F}_n\left(\frac{k}{n_1},t,r_2,r_3\right)\right|>c_M\right\}
\end{align}
with control limits $c_L>0$ and $c_M>0$ and $n_0\coloneqq\lfloor n_1s_0\rfloor$, $s_0\in(0,1)$. The reason to start monitoring in $n_0$ is that we assume that we have a kind of learning sample, i.e.\ we assume
\begin{align}\label{learningsample}
f(t,r_2,r_3)=f_0(t,r_2,r_3),\ 0\leq t\leq s_0\overline{\tau}_1, 0<s_0<1.
\end{align}
This assumption guarantees that no initial change in the signal occurs before the monitoring procedure starts which allows to estimate unknown parameters of the detection process such as the asymptotic variance $\sigma^2$, see below. 

Now the question arises how to reasonably choose the control parameters $c_L$ and $c_M$. To answer this question we are interested in the limiting distribution of our detector statistics. These can be derived from the limiting distribution of our stochastic process $\mathcal{F}_n(s,t,r_2,r_3)$ which is subject of the next section.

\section{Process limit distributions}\label{LimDist}
The asymptotic results to be discussed now, under the null hypothesis of no change and a rich class of alternative hypotheses under which the true signal converges to the reference signal, are based on a weak assumption about the asymptotic distribution of the partial sum process of the random field $\left\{\varepsilon_{i_1,i_2,i_3}\right\}$. Before discussing that assumption and providing the asymptotic results, we introduce some preliminaries and notations used in the sequel.
\subsection{Preliminaries}
As usual, the cardinality of a set $u\subseteq\{1,\ldots, q\}$ is denoted as $|u|$. Moreover, for $v\subseteq\{1,\ldots, q\}$ we write $u-v$ for the complement of $v$ with respect to $u$. In particular, we just write $-v$ if we take the complement of $v$ with respect to the whole set $\{1,\ldots, q\}$. Sets of the form $\{j,j+1,\ldots,k\}$ for integers $j$ and $k$ with $j\leq k$ are abbreviated as $j:k$, such that $\{1,\ldots, q\}=1:q$.

To pick out the components of a vector $\bm{x}\in\mathbb R^q$ that correspond to a set $u\subseteq 1:q$, we write $\bm{x}_u$, i.e.\ $\bm{x}_u$ stands for a vector with $|u|$ components with selected entries of $\bm{x}$. Now, let $u,v\subseteq 1:q$ and $\bm{x},\bm z\in [\bm{a},\bm{b}]$ with $u \cap v=\emptyset$. The symbol $\bm{x}_u:\bm z_v$ then denotes the point $\bm y\in\left[\bm a_{u \cup v},\bm b_{u \cup v}\right]$, where $y_j=x_j$ for all $j\in u$ and $y_j=z_j$ for all $j\in v$. 

Recall the concept of the $q$-fold alternating sum of a function $f$ over the hyperrectangle $[\bm{a},\bm{b}]$\index{$q$-fold alternating sum} which is defined as
\begin{align}\label{alternierendeSumme}
\Delta(f;\bm a,\bm b)\coloneqq\sum_{v\subseteq\{1,\ldots,q\}}(-1)^{|v|}f\left(\bm a_v:\bm b_{-v}\right).
\end{align}

Now, let $W$ be a subset of $[0,1]^q$. For points in $[0,1]^q$ set $\bm{t}=(t_1,\ldots,t_q)$ and $\bm{s}=(s_1,\ldots,s_q)$ respectively. We call $W$ a \textit{block} if it is of the form
\[
W\coloneqq\prod_{j=1}^{q}(s_j,t_j],
\]
where each $(s_j,t_j], j=1,\ldots,q$ is a left-open, right-closed subintervals of $[0,1]$. We now define the increment $X(W)$ of a stochastic process $X=\{X(\bm{t}):\bm{t}\in[0,1]^q\}$ around a block $W$ by means of the alternating sum (\ref{alternierendeSumme}) as
\[
X(W)\coloneqq\Delta(X;\bm s,\bm t)=\sum_{v\subseteq\{1,\ldots,q\}}(-1)^{|v|}X\left(\bm s_v:\bm t_{-v}\right).
\]
We are now able to define the Brownian motion on $[0,1]^q$, cf.\ \cite{Deo}, p.~709, where $C_q$ stands for the space of all real-valued continuous functions on $[0,1]^q$.
  \begin{definition}
The Brownian motion $B=\{B(\bm{t}):\bm{t}\in[0,1]^q\}$ on $[0,1]^q$\index{Brownian motion,!with multidimensional time set} is characterized by
\begin{itemize}
	\item[(a)] $P(B\in C_q)=1$,
	\item[(b)] if $W_1,\ldots,W_k$ are pairwise disjoint blocks in $[0,1]^q$, then the increments \[B(W_1),\ldots,B(W_k)\] are independent normal random variables with means zero and variances \[\lambda(W_1),\ldots,\lambda(W_k),\] $\lambda$ being the $q$-dimensional Lebesgue measure on $[0,1]^q$.
\end{itemize}
  \end{definition}

We can now formulate the main assumption for the asymptotic theory of the detection process as follows, where $\Rightarrow$ -- as usual -- stands for weak convergence in an appropriately chosen function space, see Appendix \ref{details}.

\textbf{Assumption 1:} Let $\{\varepsilon_{i_1,i_2,i_3}\}$ be a weakly stationary random field with $\mathbb E(\varepsilon_{\bm{i}})=0$ which satisfies a functional central limit theorem, i.e.\
\begin{align}\label{Ass1}
Z_n(v_1,v_2,v_3)\coloneqq(n_1n_2n_3)^{-1/2}\sum_{i_1=1}^{\lfloor n_1v_1\rfloor}\sum_{i_2=1}^{\lfloor n_2v_2\rfloor}
\sum_{i_3=1}^{\lfloor n_3v_3\rfloor}\varepsilon_{i_1,i_2,i_3}\Rightarrow\sigma B&(v_1,v_2,v_3),\\& (v_1,v_2,v_3)\in[0,1]^3,\notag
\end{align}
in the Skorohod space $D[0,1]^3$ as $\min_{1\leq i\leq 3}n_i\to\infty$ for some constant $\sigma^2\in(0,\infty)$.

Here, the constant $\sigma^2$ equals the long-run variance of the random field $\{\varepsilon_{i_1,i_2,i_3}\}$, i.e.\ 
\[
\sigma^2\coloneqq\lim_{\min_{1\leq i\leq 3}n_i\to\infty}\mathbb V\textnormal{ar}\left((n_1n_2n_3)^{-1/2}\sum_{i_1=1}^{n_1}\sum_{i_2=1}^{n_2}
\sum_{i_3=1}^{n_3}\varepsilon_{i_1,i_2,i_3}\right)=\sum_{\bm{k}\in\mathbb Z^3}\mathbb E(\varepsilon_{\bm{0}}\varepsilon_{\bm{k}}).
\]
There exist several results in the literature about the weak invariance principle (\ref{Ass1}) under specific conditions on the random field. In particular,
in the i.i.d.\ case we get the functional central limit theorem under the sole assumptions that 
\[
\mathbb E(\xi_{\bm{0}})=0,\ \mathbb E\left(\xi_{\bm{0}}^2\right)<\infty\ \textnormal{and}\ \sigma^2>0,
\]
see Corollary 1 in \cite{Wichura}.
More generally, a functional central limit theorem for strictly stationary and $\varphi$-mixing random fields can be found in \cite{Deo}, cf. Theorem 1.
Further results on weak invariance principles for random fields include weakly stationary associated as well as weakly stationary and $\alpha$-mixing random fields, cf.\ \cite{BulinskiKaene}, p.~2906, and \cite{BerkesMorrow}, Theorem~1, respectively. The latter obtain a strong approximation of the partial sum field by a Brownian motion from which one can deduce a weak invariance principle quite directly.
Other results on functional central limit theorems for random fields include the ones of \cite{WangWoodroofe}, cf.\ Theorem~1.1, and \cite{MachkouriVonlyWu}, cf.\ Theorem~2. These authors consider random fields of the form $X_{\bm{i}}=g(\varepsilon_{\bm{i}-\bm{s}};\bm{s}\in\mathbb Z^q)$ where $g$ is a measurable function and the $\{\varepsilon_{\bm{j}}; \bm{j}\in\mathbb Z^q\}$ are i.i.d.\ random variables. \cite{MachkouriVonlyWu} introduce the notion of a $p$-stable random field and then obtain a weak invariance principle for the so-called smoothed partial sum process.

\subsection{Asymptotics under the Null Hypothesis}
With the help of Assumption~1 we are now in a position to formulate the following theorem stating the asymptotic behaviour of the process.
\begin{theorem}\label{FCLTNull}
Suppose the noise process $\{\varepsilon_{\bm{i}}=\varepsilon_{i_1,i_2,i_3}\}$ meets Assumption~1. Assume that the sampling periods fulfill $n_j\tau_j\to\overline{\tau}_j$ for $j=1,2,3,$ as $\min_{1\leq i\leq 3}n_i\to\infty$. Then, under the null hypothesis $H_0$, we have
\[
\mathcal{F}_n(s,t,r_2,r_3)\Rightarrow\mathcal{F}(s,t,r_2,r_3),
\]
as $\min_{1\leq i\leq 3}n_i\to\infty$ for $0<s_0\leq s\leq 1$, $t\in[0,\overline{\tau}_1]$, $r_2\in[0,\overline{\tau}_2]$ and $r_3\in[0,\overline{\tau}_3]$.

The limit stochastic process $\mathcal{F}(s,t,r_2,r_3)$ is of the form
\begin{align*}
\mathcal{F}(s,t,r_2,r_3)\coloneqq\sqrt{\overline{\tau}_1\overline{\tau}_2\overline{\tau}_3}\,\sigma
\int_0^s\int_0^1\int_0^1&
\!\varphi\left(t-\overline{\tau}_1z_1,r_2-\overline{\tau}_2z_2,r_3-\overline{\tau}_3z_3\right)\\&
\mathrm{d}B(z_1,z_2,z_3),
\end{align*}
where $B(z_1,z_2,z_3)$ is the standard Brownian motion on $[0,1]^3$.
\end{theorem}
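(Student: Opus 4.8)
Under $H_0$ we have $y_{l_1,l_2,l_3}-f_0(l_1\tau_1,l_2\tau_2,l_3\tau_3)=\varepsilon_{l_1,l_2,l_3}$, so the detector process reduces to the purely stochastic term
\[
\mathcal{F}_n(s,t,r_2,r_3)=\sqrt{\tau_1\tau_2\tau_3}\sum_{l_1=1}^{\lfloor n_1s\rfloor}\sum_{l_2=1}^{n_2}\sum_{l_3=1}^{n_3}\varepsilon_{l_1,l_2,l_3}\,\varphi\!\left(t-l_1\tau_1,r_2-l_2\tau_2,r_3-l_3\tau_3\right).
\]
The plan is to recognise the right-hand side as, up to an asymptotically negligible error, a Riemann--Stieltjes integral of the smooth deterministic kernel $\varphi$ against the partial sum field $Z_n$ of Assumption~1, and then to argue by a continuous mapping argument. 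To this end one observes that the increment of $Z_n$ over the grid block $\prod_{j=1}^{3}\bigl((l_j-1)/n_j,\,l_j/n_j\bigr]$ equals $(n_1n_2n_3)^{-1/2}\varepsilon_{l_1,l_2,l_3}$, that $\sqrt{\tau_1\tau_2\tau_3}\,\sqrt{n_1n_2n_3}=\sqrt{(n_1\tau_1)(n_2\tau_2)(n_3\tau_3)}\to\sqrt{\overline{\tau}_1\overline{\tau}_2\overline{\tau}_3}$, and that $l_j\tau_j=(l_j/n_j)(n_j\tau_j)$ is uniformly close to $\overline{\tau}_j(l_j/n_j)$. Hence, with $z_j=l_j/n_j$, the triple sum is a Riemann--Stieltjes sum for the functional
\[
\Psi(\zeta)(s,t,r_2,r_3):=\sqrt{\overline{\tau}_1\overline{\tau}_2\overline{\tau}_3}\int_{0}^{s}\!\!\int_{0}^{1}\!\!\int_{0}^{1}\varphi\!\left(t-\overline{\tau}_1z_1,r_2-\overline{\tau}_2z_2,r_3-\overline{\tau}_3z_3\right)\mathrm{d}\zeta(z_1,z_2,z_3)
\]
evaluated at $\zeta=Z_n$, where for $\zeta\in D[0,1]^3$ the integral is made sense of by the three-dimensional integration-by-parts formula, which turns it into boundary terms plus $\pm\int \zeta(z)\,(\partial_{1}\partial_{2}\partial_{3}\varphi)(t-\overline{\tau}_1z_1,\dots)\,\mathrm{d}z$.

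Since the sinc product kernel $\varphi=\widetilde{\varphi}_1\widetilde{\varphi}_2\widetilde{\varphi}_3$ is $C^\infty$ with all partial derivatives bounded on the compact set $[-\overline{\tau}_1,\overline{\tau}_1]\times[-\overline{\tau}_2,\overline{\tau}_2]\times[-\overline{\tau}_3,\overline{\tau}_3]$ of relevant arguments, $\Psi$ is well defined on all of $D[0,1]^3$, is linear, and is bounded---indeed Lipschitz---with respect to the supremum norm of $\zeta$, with a Lipschitz constant independent of $(s,t,r_2,r_3)$; moreover it maps continuous $\zeta$ into $C$ of the index set and is continuous at every continuous $\zeta$. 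First I would show that $\sup|\mathcal{F}_n-\Psi(Z_n)|=o_P(1)$, the supremum being over $[s_0,1]\times[0,\overline{\tau}_1]\times[0,\overline{\tau}_2]\times[0,\overline{\tau}_3]$. This splits into (i) replacing each $l_j\tau_j$ by $\overline{\tau}_j(l_j/n_j)$, controlled by Lipschitz continuity of $\varphi$ together with $n_j\tau_j-\overline{\tau}_j\to0$, and (ii) the Riemann-sum error between the discrete weighted sum and $\Psi(Z_n)$, which, after integration by parts in both the sum and the integral, is bounded by a constant multiple of $(\max_j n_j^{-1})\,\|Z_n\|_\infty\,\|\partial_1\partial_2\partial_3\varphi\|_\infty$ together with analogous lower-dimensional boundary contributions; since the weakly convergent field $Z_n$ is stochastically bounded in the sup norm, both errors tend to zero in probability, uniformly in $(s,t,r_2,r_3)$.

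It then remains to apply the continuous mapping theorem. By Assumption~1, $Z_n\Rightarrow\sigma B$ in $D[0,1]^3$, the limit has continuous paths almost surely, and $\Psi$ is continuous there; passing, if desired, to an almost-sure representation $\tilde{Z}_n\to\sigma\tilde{B}$ (which, the limit being continuous, holds uniformly) via the Skorokhod representation theorem and arguing pathwise, one obtains $\Psi(Z_n)\Rightarrow\Psi(\sigma B)$. By linearity of the Wiener integral $\Psi(\sigma B)=\sigma\Psi(B)=\mathcal{F}$, and combining this with the $o_P(1)$ bound yields $\mathcal{F}_n\Rightarrow\mathcal{F}$ in the function space of Appendix~\ref{details}; note also that $\mathcal{F}$ is a well-defined centered Gaussian process because $\varphi$ is bounded, hence square-integrable, over the relevant hyperrectangle, so the Wiener integral and its covariance kernel exist. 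The main obstacle I anticipate is step (i)--(ii): making the passage from the discrete triple sum to the continuous stochastic integral fully rigorous and \emph{uniform} over the four-dimensional index set---in particular over the variable $s$, which moves the upper limit of summation and integration---while being careful that the Skorokhod-topology convergence of $Z_n$ is exploited only through sup-norm quantities (legitimate since the limit is continuous) and that the boundary terms produced by the multivariate integration by parts are handled simultaneously with the interior term. The smoothness and boundedness of $\varphi$ on the compact range of its arguments is precisely what makes all of these error estimates go through.
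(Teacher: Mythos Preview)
Your proposal is correct and follows essentially the same route as the paper: write $\mathcal{F}_n$ as a functional $\Psi$ (the paper's $\Lambda$) of the partial-sum field $Z_n$, up to $o_P(1)$, establish that $\Psi$ is continuous at continuous paths, and invoke the continuous mapping theorem together with Assumption~1 and the a.s.\ continuity of $B$. The only technical difference is that the paper packages the continuity estimate through the Hardy--Krause variation of $\varphi_w$ (its Lemmas~A.1--A.3), whereas you exploit the $C^\infty$ smoothness of the sinc kernel and work directly with the mixed partial $\partial_1\partial_2\partial_3\varphi$; for a smooth product kernel these are equivalent, your version being slightly more elementary but less portable to non-smooth kernels. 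One simplification you can take from the paper: the multivariate Riemann--Stieltjes integral of $\varphi_w$ against the pure step function $Z_n$ equals the weighted triple sum \emph{exactly} (the paper verifies this by direct computation on the grid), so your step~(ii) is vacuous and only step~(i)---replacing $l_j\tau_j$ by $\overline{\tau}_j\,l_j/n_j$ and $\sqrt{\tau_1\tau_2\tau_3\,n_1n_2n_3}$ by $\sqrt{\overline{\tau}_1\overline{\tau}_2\overline{\tau}_3}$---is needed to produce the $o_P(1)$ remainder.
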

The weak convergence takes place in a higher dimensional Skorohod space and the last integral is interpreted as multivariate Riemann-Stieltjes integral, see Appendix \ref{details} for more details.

The next lemma is a characterization of the correlation structure of the limit process $\mathcal{F}(s,t,r_2,r_3)$.
  \begin{lemma}\label{covstructure}
\begin{itemize}
	\item[(a)] The process $\mathcal{F}(s,t,r_2,r_3)$ is a nonstationary multivariable Gaussian process with
\[ 
\mathbb E(\mathcal{F}(s,t,r_2,r_3))=\bm{0} 
\] 
and covariance function
\begin{align}\label{covfct}
&\quad\ \mathbb C\textnormal{ov}\left(\mathcal{F}(s^{(1)},t^{(1)},r^{(1)}_2,r^{(1)}_3),\mathcal{F}(s^{(2)},t^{(2)},r^{(2)}_2,r^{(2)}_3)\right)\notag\\&
=\sigma^2\overline{\tau}_1\overline{\tau}_2\overline{\tau}_3\int_0^{\min\{s^{(1)},s^{(2)}\}}\int_0^1\int_0^1\!\varphi(t^{(1)}-\overline{\tau}_1z_1,r^{(1)}_2-\overline{\tau}_2z_2,r^{(1)}_3-\overline{\tau}_3z_3)\notag\\&\hspace{3cm}
\varphi(t^{(2)}-\overline{\tau}_1z_1,r^{(2)}_2-\overline{\tau}_2z_2,r^{(2)}_3-\overline{\tau}_3z_3)\,\mathrm{d}z_3\mathrm{d}z_2\mathrm{d}z_1
\end{align}
for $0<s_0\leq s^{(1)},s^{(2)}\leq 1$, $0\leq t^{(1)},t^{(2)}\leq\overline{\tau}_1$, $0\leq r^{(1)}_2,r^{(2)}_2\leq\overline{\tau}_2$, and $0\leq r^{(1)}_3,r^{(2)}_3\leq\overline{\tau}_3$.
\item[(b)] The process $\mathcal F(s,t,r_2,r_3)$ has continuous sample paths.
\end{itemize}
  \end{lemma}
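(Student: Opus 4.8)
The plan is to read off both claims directly from the defining integral representation of $\mathcal{F}$, using that it is a stochastic integral of a \emph{deterministic} kernel against the Brownian sheet $B$ on $[0,1]^3$. For fixed $(s,t,r_2,r_3)$ the integrand
\[
(z_1,z_2,z_3)\longmapsto \mathbf 1_{[0,s]}(z_1)\,\varphi\!\left(t-\overline{\tau}_1z_1,\,r_2-\overline{\tau}_2z_2,\,r_3-\overline{\tau}_3z_3\right)
\]
is bounded on $[0,1]^3$, since $|\widetilde\varphi_j(x)|\le\Omega_j/\pi$ for $j=1,2,3$; hence it lies in $L^2([0,1]^3)$, the integral is well defined, and it agrees with the corresponding Wiener integral against $B$. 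First I would record that $\mathcal{F}$ is Gaussian: the Wiener integral is an $L^2$-limit of linear combinations of the Gaussian increments $B(W)$, so each $\mathcal{F}(s,t,r_2,r_3)$ is Gaussian and, more generally, every finite family $\{\mathcal{F}(\bm x^{(j)})\}_j$ is jointly Gaussian. The mean-zero statement follows from $\mathbb{E}\,B(W)=0$ together with linearity.

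For the covariance I would invoke the isometry $\mathbb{E}\bigl[\int_{[0,1]^3}g_1\,\mathrm dB\int_{[0,1]^3}g_2\,\mathrm dB\bigr]=\int_{[0,1]^3}g_1g_2\,\mathrm d\lambda$, valid for deterministic $g_1,g_2\in L^2([0,1]^3)$, applied to the two kernels $g_i$ obtained with data $(s^{(i)},t^{(i)},r_2^{(i)},r_3^{(i)})$. Their product carries the indicator $\mathbf 1_{[0,\min\{s^{(1)},s^{(2)}\}]}(z_1)$, and after multiplying by the outer factor $\bigl(\sqrt{\overline{\tau}_1\overline{\tau}_2\overline{\tau}_3}\,\sigma\bigr)^2=\sigma^2\overline{\tau}_1\overline{\tau}_2\overline{\tau}_3$ one obtains precisely (\ref{covfct}). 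Nonstationarity is then immediate by inspection, since the covariance depends on $(s^{(1)},s^{(2)})$ through $\min\{s^{(1)},s^{(2)}\}$ and not merely through $s^{(1)}-s^{(2)}$; equivalently, for fixed $t,r_2,r_3$ the variance $\mathbb{V}\textnormal{ar}(\mathcal{F}(s,t,r_2,r_3))$ is a non-constant function of $s$.

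For part (b) I would apply the Kolmogorov--Chentsov continuity criterion on the four-dimensional parameter box $[s_0,1]\times[0,\overline{\tau}_1]\times[0,\overline{\tau}_2]\times[0,\overline{\tau}_3]$. Since $\mathcal{F}$ is centered Gaussian, for every $m\in\mathbb{N}$ one has $\mathbb{E}|\mathcal{F}(\bm x^{(1)})-\mathcal{F}(\bm x^{(2)})|^{2m}=(2m-1)!!\,\bigl(\mathbb{E}|\mathcal{F}(\bm x^{(1)})-\mathcal{F}(\bm x^{(2)})|^{2}\bigr)^{m}$, so it suffices to establish the linear $L^2$-increment bound $\mathbb{E}|\mathcal{F}(\bm x^{(1)})-\mathcal{F}(\bm x^{(2)})|^{2}\le C\,\|\bm x^{(1)}-\bm x^{(2)}\|$. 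Expanding the left side via the isometry and splitting the difference of kernels into (i) the contribution of replacing the domain $[0,s^{(1)}]$ by $[0,s^{(2)}]$ in the $z_1$-integration --- bounded by $|s^{(1)}-s^{(2)}|$ using boundedness of $\varphi$ --- and (ii) the contribution $\varphi(t^{(1)}-\overline{\tau}_1z_1,\dots)-\varphi(t^{(2)}-\overline{\tau}_1z_1,\dots)$ --- bounded by $|t^{(1)}-t^{(2)}|+|r_2^{(1)}-r_2^{(2)}|+|r_3^{(1)}-r_3^{(2)}|$ using the Lipschitz continuity of $\varphi=\widetilde\varphi_1\widetilde\varphi_2\widetilde\varphi_3$ on $\mathbb{R}^3$ (each $\widetilde\varphi_j\in C^\infty$ with $\widetilde\varphi_j,\widetilde\varphi_j'$ bounded) --- gives this bound. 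Choosing $m$ large enough (e.g.\ $m\ge 5$, so that $m>4$) yields $\mathbb{E}|\mathcal{F}(\bm x^{(1)})-\mathcal{F}(\bm x^{(2)})|^{2m}\le c_m\,C^m\|\bm x^{(1)}-\bm x^{(2)}\|^{m}$, whence Kolmogorov--Chentsov provides a continuous modification of $\mathcal{F}$; since $\mathcal{F}$ is already pinned down pathwise by the Riemann--Stieltjes integral, this modification is $\mathcal{F}$ itself.

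The step I expect to need the most care is the identification of the pathwise multivariate Riemann--Stieltjes integral used to define $\mathcal{F}$ with the $L^2$ Wiener-integral calculus, since the latter supplies the isometry underlying both the covariance formula and the increment bound; once this identification is in place, both parts are essentially computational. A minor secondary nuisance is tracking the Lipschitz constant of the triple product kernel $\varphi$ uniformly over the compact ranges of $t,r_2,r_3$.
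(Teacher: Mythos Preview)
Your proposal is correct. The paper itself does not give a self-contained argument here: it simply states that the proof is analogous to Corollary~1 in Pawlak--Steland (2013), invoking that the properties in Theorem~5.1.4 of Ash--Gardner carry over to multivariate Riemann--Stieltjes integrals. That reference treats the mean, covariance, and Gaussianity of integrals of deterministic functions against orthogonal-increment processes, which is exactly the content of your part~(a); so your approach via the Wiener isometry is essentially the same calculus, just phrased in $L^2$ language rather than in the Riemann--Stieltjes framework the paper keeps throughout.

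The one place where you genuinely add something is part~(b): the paper gives no indication of how continuity is obtained (presumably it is again absorbed into the Pawlak--Steland citation), whereas you supply an explicit Kolmogorov--Chentsov argument with the correct moment count ($m>4$ for a four-dimensional index set). Your flagged point of care---identifying the pathwise integration-by-parts definition of $\mathcal{F}$ in~(\ref{DefunbeschrVar}) with the $L^2$ Wiener integral---is well taken, since the paper works exclusively with the former; for smooth, bounded-variation integrands like $\varphi_w$ this identification is standard (the Riemann--Stieltjes sums converge both pathwise and in $L^2$ to the same limit), but it is worth stating once.
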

Now that we have the limit distribution of $\mathcal F_n(s,t,r_2,r_3)$ under the null hypothesis at our disposal, we can easily derive central limit theorems for the local and global maximum norm detector defined in (\ref{locmvb}) and (\ref{globmvb}). 
 \begin{lemma}\label{cltdet}
Assume that condition (\ref{learningsample}) holds true.
Then, under the conditions of Theorem~\ref{FCLTNull} the detectors satisfy the following central limit theorems:
\begin{align*}
&\mathcal L_n/n_1\Rightarrow\mathcal{L}\coloneqq\inf\left\{s\in[s_0,1]: \sup_{r_2\in[0,\overline{\tau}_2],\atop r_3\in[0,\overline{\tau}_3]}\left|\mathcal{F}(s,s\overline{\tau}_1,r_2,r_3)\right|>c_L\right\},\\&
\mathcal M_n/n_1\Rightarrow\mathcal{M}\coloneqq\inf\left\{s\in[s_0,1]: \sup_{0\leq t\leq s\overline{\tau}_1}\sup_{r_2\in[0,\overline{\tau}_2],\atop r_3\in[0,\overline{\tau}_3]}\left|\mathcal{F}(s,t,r_2,r_3)\right|>c_M\right\},
\end{align*}
as $\min_{1\leq i\leq 3} n_i\to\infty$. 
 \end{lemma}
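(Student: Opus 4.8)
The plan is to exhibit $\mathcal{L}_n/n_1$ and $\mathcal{M}_n/n_1$ as first-passage functionals of continuous-parameter processes built from $\mathcal{F}_n$, and to pass to the limit by the continuous mapping theorem, using Theorem~\ref{FCLTNull} together with the path continuity of $\mathcal{F}$ supplied by Lemma~\ref{covstructure}(b). For $s\in[s_0,1]$ put
\[
g_n^L(s):=\max_{r_2\in[0,\overline{\tau}_2],\,r_3\in[0,\overline{\tau}_3]}\left|\mathcal{F}_n\!\left(\frac{\lfloor n_1s\rfloor}{n_1},\frac{\overline{\tau}_1\lfloor n_1s\rfloor}{n_1},r_2,r_3\right)\right|,\qquad g^L(s):=\sup_{r_2\in[0,\overline{\tau}_2],\,r_3\in[0,\overline{\tau}_3]}\left|\mathcal{F}(s,s\overline{\tau}_1,r_2,r_3)\right|,
\]
and define $g_n^M,g^M$ analogously, with an additional supremum over $t\in[0,\overline{\tau}_1\lfloor n_1s\rfloor/n_1]$, resp.\ over $t\in[0,s\overline{\tau}_1]$. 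Then, up to an asymptotically negligible discretization in the starting value ($n_0/n_1\to s_0$) and in the time argument, one has $\mathcal{L}_n/n_1=\inf\{s\in[s_0,1]:g_n^L(s)>c_L\}$ and $\mathcal{M}_n/n_1=\inf\{s\in[s_0,1]:g_n^M(s)>c_M\}$, with the convention $\inf\emptyset:=1$, while $\mathcal{L}$ and $\mathcal{M}$ are the corresponding first-passage times of $g^L$ and $g^M$ over the same levels.

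The first step is to show $g_n^L\Rightarrow g^L$ and $g_n^M\Rightarrow g^M$ in $D[s_0,1]$. By Theorem~\ref{FCLTNull}, $\mathcal{F}_n\Rightarrow\mathcal{F}$ in the relevant Skorohod space, and by Lemma~\ref{covstructure}(b) the limit $\mathcal{F}$ is almost surely sample-continuous, so that the convergence is in effect uniform on the compact index set. The operations taking $\mathcal{F}_n$ to $g_n^L$, resp.\ $g_n^M$ --- restriction to the diagonal $t=s\overline{\tau}_1$ (for $g^L$), passing to absolute values, and taking suprema over the compact, continuously varying index sets (for $g^M$ also over $t\in[0,s\overline{\tau}_1]$) --- are continuous at every continuous path and preserve uniform convergence, so the continuous mapping theorem yields the claimed weak convergence with continuous limits $g^L,g^M$.

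It then remains to apply a continuous-mapping statement for the first-passage functional $\psi_c(\phi):=\inf\{s\in[s_0,1]:\phi(s)>c\}$, $\inf\emptyset:=1$: $\psi_c$ is continuous at $\phi\in C[s_0,1]$ whenever $\inf\{s:\phi(s)>c\}=\inf\{s:\phi(s)\ge c\}$, and in that case the discretized first-passage times over the grids $\{k/n_1\}$ converge to $\psi_c(\phi)$ as the mesh tends to zero as well; this is routine. By the extended continuous mapping theorem it therefore suffices to verify that, $\mathbb{P}$-almost surely, $\inf\{s:g^L(s)>c_L\}=\inf\{s:g^L(s)\ge c_L\}$, and likewise for $g^M$ and $c_M$. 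Writing $R(s):=\sup_{u\in[s_0,s]}g^L(u)$ for the continuous, nondecreasing running maximum and using the continuity of $g^L$, this identity is equivalent to $\inf\{s:R(s)>c_L\}=\inf\{s:R(s)\ge c_L\}$, which can fail only if $R$ is constant equal to $c_L$ on some nondegenerate subinterval of $[s_0,1]$; intersecting that stretch with the rationals bounds the probability of the exceptional event by $\sum_{q\in\mathbb{Q}\cap(s_0,1)}\mathbb{P}(R(q)=c_L)$, and analogously for $g^M$.

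The main obstacle is thus reduced to showing $\mathbb{P}(R(q)=c_L)=0$ for each fixed $q\in(s_0,1)$ --- that is, that the distribution of $\sup_{u\in[s_0,q]}\sup_{r_2,r_3}|\mathcal{F}(u,u\overline{\tau}_1,r_2,r_3)|$ has no atom at $c_L$ (and similarly for $g^M$). For this one uses that, by Lemma~\ref{covstructure}, $\mathcal{F}$ is a centered, sample-continuous Gaussian process whose covariance \eqref{covfct} is non-degenerate, as $\varphi\not\equiv0$; consequently the quantity above is the supremum of a non-degenerate, sample-continuous Gaussian process over a compact index set, and such suprema are known to have atomless distributions --- indeed ones that are absolutely continuous on the interior of their support. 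Hence the exceptional set is $\mathbb{P}$-null, $\psi_{c_L}$ is almost surely continuous at $g^L$ and $\psi_{c_M}$ at $g^M$, and $\mathcal{L}_n/n_1\Rightarrow\mathcal{L}$, $\mathcal{M}_n/n_1\Rightarrow\mathcal{M}$ follow. I expect this atomlessness property of the Gaussian-supremum distributions at the prescribed thresholds $c_L,c_M$ to be the only genuinely delicate ingredient; the remaining steps are routine applications of the continuous mapping theorem and of properties of first-passage maps.
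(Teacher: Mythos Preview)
Your proposal is correct and follows the same approach as the paper---Theorem~\ref{FCLTNull} combined with the continuous mapping theorem---though the paper's own proof is a single sentence invoking these two ingredients without further justification. Your careful treatment of the first-passage functional's discontinuity set and the atomlessness of the Gaussian-supremum distribution fills in exactly the technical points the paper leaves implicit.
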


\subsection{Asymptotics under the Alternative}
We now investigate the behaviour of our statistic $\mathcal{F}_n(s,t,r_2,r_3)$ under a general class of alternatives $H_1:f_{n_1,n_2,n_3}(t,r_2,r_3)\neq f_0(t,r_2,r_3)$, i.e.\ in the situation when the observed signal and the reference signal differ. We assume that our observed data $\{y_{\bm{i}}=y_{i_1,i_2,i_3}: (i_1,i_2,i_3)\in\{1,\ldots,n_1\}\times\{1,\ldots,n_2\}\times\{1,\ldots,n_3\}\}$ obey the following model:
\begin{align}\label{modelalternative3}
y_{i_1,i_2,i_3}=f_{n_1,n_2,n_3}\left(i_1\tau_1,i_2\tau_2,i_3\tau_3\right)+\varepsilon_{i_1,i_2,i_3}
\end{align}
with the true signal $f_{n_1,n_2,n_3}(t,r_2,r_3)$ depending on the sample size $(n_1,n_2,n_3)$ and $f_{n_1,n_2,n_3}(t,r_2,r_3)\to f_0(t,r_2,r_3)$ as $\min_{1\leq i\leq 3}n_i\to\infty$. 
The process $\{\varepsilon_{\bm i}=\varepsilon_{i_1,i_2,i_3}\}$ is again the zero mean noise random field fulfilling Assumption~1.

It turns out that the process $\mathcal F_n(s,t,r_2,r_3)$ converges to a well-defined and non-degenerate limit process under general conditions on the variation of the difference $f_{n_1,n_2,n_3}(t,r_2,r_3)-f_0(t,r_2,r_3)$, similar as in \cite{PawlakSteland}. However, whereas in dimension $q=1$ the Vitali variation suffices, in higher dimensions one has to consider the variation in the sense of Hardy and Krause.

Let $a,b\in\mathbb{R}$ with $a\leq b$. A \textit{ladder} on $[a,b]$ is a set $\mathcal Y$ containing $a$ and finitely many, possibly zero, values from $(a,b)$, see \cite{Owen}, p.~2. The successor of an element $y\in\mathcal Y$ is denoted by $y^+$. For $(y,\infty)\cap\mathcal Y=\emptyset$ we set $y^+=b$ and otherwise $y^+$ is the smallest element of $(y,\infty)\cap\mathcal Y$. In particular, if we consider a classical partition of $[a,b]$, we have $\mathcal Y=\{y_0,y_1,\ldots,y_m\}$ with $a=y_0<y_1<\ldots<y_m$ such that $y_{k+1}$ is the successor of $y_k$ for all $k<m$ and it is $b$ for $k=m$.

If we now define $\mathbb Y$ as the set of all ladders on $[a,b]$, we can write the total variation of a univariate function $f$ defined on $[a,b]$ as
\[
V(f;a,b)\coloneqq\sup_{\mathcal Y\in\mathbb Y}\sum_{y\in\mathcal Y}\left|f\left(y^+\right)-f(y)\right|.
\]

In order to generalize the one-dimensional variation to the multidimensional case we need the concept of multidimensional ladders. We now consider a hyperrectangle $[\bm{a},\bm{b}]$ with $\bm a,\bm b\in\mathbb R^q$ and $\bm a\leq \bm b$. We define a ladder $\mathcal Y$ on $[\bm{a},\bm{b}]$ as $\mathcal Y\coloneqq\prod_{j=1}^q\mathcal Y_j$, where $\mathcal Y_j$ is a ladder on $\left[a_j,b_j\right]$ for $j=1,\ldots,q$. Similarly, we say that $\bm y^+$ is the successor of $\bm y$ if $y^+_j$ is the successor of $y_j$ for each $j=1,\ldots,q$. Finally, we set $\mathbb Y\coloneqq\prod_{j=1}^q\mathbb Y_j$ for the set of all ladders on $[\bm{a},\bm{b}]$, where $\mathbb Y_j$ denotes the set of all ladders on $\left[a_j,b_j\right]$ for $j=1,\ldots,q$.
With these alternating sums we are now able to define the variation of $f$ over $\mathcal Y$ as
\[
V_{\mathcal Y}(f)\coloneqq\sum_{\bm y\in\mathcal Y}\left|\Delta\left(f;\bm y,\bm y^+\right)\right|.
\]
This leads to the following definition, see \cite{Owen}, Definition~1 and 2.
\begin{definition}
The variation of $f$ on the hyperrectangle $[\bm{a},\bm{b}]$, in the sense of Vitali, is
\begin{align*}
V_{[\bm{a},\bm{b}]}(f)\coloneqq\sup_{\mathcal Y\in\mathbb Y}V_{\mathcal Y}(f).
\end{align*}
The variation of $f$ on the hyperrectangle $[\bm{a},\bm{b}]$, in the sense of Hardy and Krause, is
\begin{align}\label{VarHardyKrause}
V_{HK}(f)=V_{HK}(f;\bm a,\bm b)\coloneqq\sum_{\emptyset\neq u\subseteq 1:q}V_{\left[\bm a_u,\bm b_u\right]}f\left(\bm{x}_u:\bm b_{-u}\right).
\end{align}
\end{definition}
Likewise as in the one-dimensional case, we say that the function $f$ is of bounded variation in the sense of Vitali (and we write $f\in BV$ or $f\in BV[\bm{a},\bm{b}]$) if $V(f)<\infty$. Note that for $q=1$ the variation in the sense of Vitali corresponds with the common definition of the variation of a univariate function.
We say that the function $f$ is of bounded variation in the sense of Hardy and Krause (and we write $f\in BVHK$ or $f\in BVHK [\bm{a},\bm{b}]$) if $V_{HK}(f)<\infty$. Note that the summand for $u=1:q$ in the above sum equals $V(f)$, such that $V(f)<\infty$ if $V_{HK}(f)<\infty$. Moreover, it was shown by \cite{Young}, that $V_{HK}(f)<\infty$ if and only if $V_{[\bm{a},\bm{b}]}(f)<\infty$ and $V_{\left[\bm a_u,\bm b_u\right]}f\left(\bm{x}_u:\bm z_{-u}\right)<\infty$ for all $0<|u|<q$ and all $\bm z_{-u}\in\left[\bm a_{-u},\bm b_{-u}\right]$, which is the original definition of bounded variation of Hardy, see \cite{Hardy}. This means, that in the above definition $\bm b_{-u}$ could be replaced by an arbitrary fixed point of the hyperrectangle $\left[\bm a_{-u},\bm b_{-u}\right]$ for $\emptyset\neq u\subseteq 1:q$.

For more details on these two notions of multidimensional variation we refer the reader to \cite{Owen}.
 
\subsubsection{Deterministic Disturbances}
We first consider the case where $f_{n_1,n_2,n_3}(t,r_2,r_3)$ is deterministic and start with
the following illustrative example, where $n=n_1=n_2=n_3$.
Consider
\begin{align*}
f_n(t,r_2,r_3)=f_0(t,r_2,r_3)+\frac{\left(t-\theta \overline{\tau}_1\right)^{\gamma}\widetilde{\delta}(r_2,r_3)}{n^{\beta}}\bm{1}_{\left\{t\geq\theta \overline{\tau}_1\right\}}
\end{align*}
for some $\theta\in (0,1)$, $\gamma\geq 0$, $\beta>0$, and a non-zero `location' function $\widetilde{\delta}(r_2,r_3)$ defined on $[0,\overline{\tau}_2]\times[0,\overline{\tau}_3]$. This is a local alternative with a change point at time $t=\theta \overline{\tau}_1$. This means, that up to time $\theta \overline{\tau}_1$ the observed data obey $f_0$ and after this point in time they get disturbed by $\left(t-\theta \overline{\tau}_1\right)^{\gamma}\widetilde{\delta}(r_2,r_3)/n^{\beta}$. This disturbance depends on the function $\widetilde{\delta}(r_2,r_3)$ which assigns different weights at the locations $(r_2,r_3)$ changing with the time.
In the following we require a more general model for local alternatives, namely we consider
\begin{align}\label{local3}
f_{n_1,n_2,n_3}(t,r_2,r_3)-f_0(t,r_2,r_3)=\frac{\delta(t,r_2,r_3)}{n_1^{\bm_1}n_2^{\beta_2}n_3^{\beta_3}}
\end{align}
for some $\beta_1,\beta_2,\beta_3>0$ and a deterministic function $\delta(t,r_2,r_3)$.
We assume that $\delta$ meets the following assumption.

\textbf{Assumption 2:} Let $\delta(t,r_2,r_3)$ be a nonzero function defined on $[0,\overline{\tau}_1]\times[0,\overline{\tau}_2]\times[0,\overline{\tau}_3]$ which is\\[-4ex]
\begin{itemize}
	\item[(a)] continuous
	\item[(b)] of bounded variation in the sense of Hardy and Krause. 
\end{itemize}
Now we are able to describe the asymptotic behaviour of the statistic $\mathcal{F}_n(s,t,r_2,r_3)$ under local alternatives.
\begin{theorem}\label{detalt}
Assume the sampling model in (\ref{modelalternative3}) with the local alternative given in (\ref{local3}) where $\beta=3/2$. Let Assumption~1 hold and suppose that either Assumption~2 (a) and $n\tau_j\to\overline{\tau}_j$, $n\to\infty$, $j=1,2,3,$ or Assumption~2 (b) and $n\tau_j=\overline{\tau}_j$, $j=1,2,3,$ is satisfied. Then, we have
\[
\mathcal{F}_n(s,t,r_2,r_3)\Rightarrow\mathcal{F}^{\delta}(s,t,r_2,r_3),
\]
as $\min_{1\leq i\leq 3}n_i\to\infty$ for $0<s_0\leq s\leq 1$, $t\in[0,\overline{\tau}_1]$, $r_2\in[0,\overline{\tau}_2]$ and $r_3\in[0,\overline{\tau}_3]$.

The limit stochastic process $\mathcal F^{\delta}(s,t,r_2,r_3)$ is given by
\begin{align*}
\mathcal{F}^{\delta}(s,t,r_2,r_3)\coloneqq\mathcal{F}(s,t,r_2,r_3)+\frac{1}{\sqrt{\overline{\tau}_1\overline{\tau}_2\overline{\tau}_3}}
\int_0^{s\overline{\tau}_1}\int_0^{\overline{\tau}_2}\int_0^{\overline{\tau}_3}
\!&\varphi\left(t-z_1,r_2-z_2,r_3-z_3\right)\\&\delta(z_1,z_2,z_3)\,\mathrm{d}z_3\mathrm{d}z_2\mathrm{d}z_1.
\end{align*}
\end{theorem}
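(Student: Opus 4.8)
The plan is to split $\mathcal F_n$ into the part driven by the noise field --- which under the alternative is literally the detector process already treated under $H_0$ --- and a deterministic remainder stemming from the local shift $\delta/(n_1^{\beta_1}n_2^{\beta_2}n_3^{\beta_3})$; to invoke Theorem~\ref{FCLTNull} for the first term and a quadrature (Riemann-sum) argument for the second; and to combine the two by a Slutsky-type continuous-mapping step.

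Substituting $y_{l_1,l_2,l_3}-f_0(l_1\tau_1,l_2\tau_2,l_3\tau_3)=\varepsilon_{l_1,l_2,l_3}+n_1^{-\beta_1}n_2^{-\beta_2}n_3^{-\beta_3}\,\delta(l_1\tau_1,l_2\tau_2,l_3\tau_3)$ into (\ref{detectionprocess}) yields $\mathcal F_n=\mathcal F_n^{\varepsilon}+D_n$, where $\mathcal F_n^{\varepsilon}$ is exactly the process (\ref{detectionprocess}) under $H_0$, so that $\mathcal F_n^{\varepsilon}\Rightarrow\mathcal F$ by Theorem~\ref{FCLTNull}, while
\begin{align*}
D_n(s,t,r_2,r_3)&=\frac{\sqrt{\tau_1\tau_2\tau_3}}{n_1^{\beta_1}n_2^{\beta_2}n_3^{\beta_3}}\sum_{l_1=1}^{\lfloor n_1 s\rfloor}\sum_{l_2=1}^{n_2}\sum_{l_3=1}^{n_3}\delta(l_1\tau_1,l_2\tau_2,l_3\tau_3)\\
&\hspace{3.2cm}\times\,\varphi(t-l_1\tau_1,r_2-l_2\tau_2,r_3-l_3\tau_3)
\end{align*}
is deterministic. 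I would pull a factor $\tau_1\tau_2\tau_3$ out of the triple sum, turning it into a Riemann sum --- over the grid $\{(l_1\tau_1,l_2\tau_2,l_3\tau_3)\}$ --- of the integrand $\bm z\mapsto\delta(\bm z)\,\varphi(t-z_1,r_2-z_2,r_3-z_3)$ on $[0,s\overline\tau_1]\times[0,\overline\tau_2]\times[0,\overline\tau_3]$, while the leftover scalar factor equals $\bigl(n_1^{1/2-\beta_1}n_2^{1/2-\beta_2}n_3^{1/2-\beta_3}\bigr)(n_1\tau_1\,n_2\tau_2\,n_3\tau_3)^{-1/2}$, which tends to $(\overline\tau_1\overline\tau_2\overline\tau_3)^{-1/2}$ precisely under the stated exponent condition ($\beta=3/2$) together with $n_j\tau_j\to\overline\tau_j$.

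The heart of the matter is then to show that these Riemann sums converge to $\int_0^{s\overline\tau_1}\int_0^{\overline\tau_2}\int_0^{\overline\tau_3}\delta(z_1,z_2,z_3)\,\varphi(t-z_1,r_2-z_2,r_3-z_3)\,\mathrm{d}z_3\,\mathrm{d}z_2\,\mathrm{d}z_1$, uniformly in $(s,t,r_2,r_3)$. Under Assumption~2~(a) the integrand is a continuous, hence uniformly continuous, function of $(\bm z,t,r_2,r_3)$ on the relevant compact set, so the standard Riemann-sum estimate applies; the floor $\lfloor n_1 s\rfloor$ and the defect $n_j\tau_j-\overline\tau_j$ each contribute only $o(1)$ since the integrand is bounded, and uniformity in the shift parameters follows from the equicontinuity of the family $\{\varphi(t-\cdot,r_2-\cdot,r_3-\cdot)\}$. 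Under Assumption~2~(b), where the identity $n_j\tau_j=\overline\tau_j$ makes the sampling points the equispaced grid on $[0,\overline\tau_j]$ with star-discrepancy of order $\max_j n_j^{-1}$, I would instead invoke the Koksma--Hlawka inequality (cf.\ \cite{Owen}): the quadrature error is bounded by the star-discrepancy times the Hardy--Krause variation of the integrand, and that variation is finite and bounded uniformly in $(s,t,r_2,r_3)$ because $\delta\in BVHK$ by Assumption~2~(b), $\varphi$ is a product of three smooth one-dimensional factors of bounded variation whose variation is unaffected by the shift, and standard facts about Hardy--Krause variation (products of such functions, restriction of the first variable to $[0,s\overline\tau_1]$) keep the total variation under control. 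Once $D_n\to D$ uniformly with deterministic continuous limit $D(s,t,r_2,r_3)=(\overline\tau_1\overline\tau_2\overline\tau_3)^{-1/2}\int_0^{s\overline\tau_1}\int_0^{\overline\tau_2}\int_0^{\overline\tau_3}\varphi(t-z_1,r_2-z_2,r_3-z_3)\,\delta(z_1,z_2,z_3)\,\mathrm{d}z_3\,\mathrm{d}z_2\,\mathrm{d}z_1$ is established, uniform convergence of the $D_n$ to the continuous $D$ entails convergence in the Skorohod topology of the ambient product space, on which addition of the continuous function $D$ acts continuously; hence $\mathcal F_n=\mathcal F_n^{\varepsilon}+D_n\Rightarrow\mathcal F+D=\mathcal F^{\delta}$, which is the assertion.

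I expect the main obstacle to be the uniform quadrature estimate in case~(b): one has to check that $\bm z\mapsto\delta(\bm z)\,\varphi(t-z_1,r_2-z_2,r_3-z_3)$ has Hardy--Krause variation bounded independently of $(t,r_2,r_3)$ and to handle the moving domain $[0,s\overline\tau_1]\times[0,\overline\tau_2]\times[0,\overline\tau_3]$, so that Koksma--Hlawka yields a bound vanishing uniformly in all parameters --- the exact relation $n_j\tau_j=\overline\tau_j$ being exactly what makes the discrepancy term explicitly controllable, which is why this stronger hypothesis is needed in~(b). In case~(a) the only delicate points are the bookkeeping with the floor function, the non-exact relation $n_j\tau_j\to\overline\tau_j$, and equicontinuity of the shifted kernels, all of which are routine.
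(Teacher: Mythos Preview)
Your proposal is correct and follows essentially the same route as the paper: the same decomposition $\mathcal F_n=\mathcal F_n^{\varepsilon}+D_n$, Theorem~\ref{FCLTNull} for the stochastic part, and a uniform Riemann-sum argument for $D_n$ --- via uniform continuity in case~(a) and via the Koksma--Hlawka inequality (with uniformly bounded Hardy--Krause variation of $\varphi_\delta$ and $O(n^{-1})$ star discrepancy of the equispaced grid) in case~(b). The paper handles the moving upper limit $s\overline\tau_1$ that you flag as the main obstacle by rescaling the first coordinate to $[0,1]$ and then splitting off the elementary error $\bigl|1/(sn^3)-1/(\lfloor ns\rfloor n^2)\bigr|\cdot\sum|\varphi_\delta|=O(1/n)$ before applying Koksma--Hlawka, which is exactly what your sketch anticipates.
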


Similarly as above we obtain central limit theorems for our detectors, defined in (\ref{locmvb}) and (\ref{globmvb}), under the alternative.
 \begin{lemma}\label{cltdetalt}
Let the condition in (\ref{learningsample}) hold. Then, under the assumptions of Theorem \ref{detalt} we obtain the asymptotic distribution of the local and global maximum norm detector by replacing $\mathcal F(s,t,r_2,r_3)$ by $\mathcal F^{\delta}(s,t,r_2,r_3)$ in Corollary \ref{cltdet}.
 \end{lemma}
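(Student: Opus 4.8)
The plan is to derive the limit laws of $\mathcal L_n/n_1$ and $\mathcal M_n/n_1$ from the process convergence $\mathcal F_n\Rightarrow\mathcal F^{\delta}$ of Theorem~\ref{detalt} by a continuous-mapping argument applied to a \emph{first-passage time} functional, mirroring the proof of Lemma~\ref{cltdet} but with the null limit $\mathcal F$ replaced throughout by $\mathcal F^{\delta}$. Concretely, on the relevant (higher-dimensional Skorohod) function space introduce
\[
\Psi_L(x)\coloneqq\inf\Bigl\{s\in[s_0,1]:\sup_{r_2\in[0,\overline{\tau}_2],\,r_3\in[0,\overline{\tau}_3]}|x(s,s\overline{\tau}_1,r_2,r_3)|>c_L\Bigr\},
\]
and the analogous $\Psi_M$ with an additional supremum over $t\in[0,s\overline{\tau}_1]$; these send a path to the first time its running spatial supremum exceeds the control limit. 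One checks that $\mathcal L_n/n_1=\Psi_L(\mathcal F_n)$ and $\mathcal M_n/n_1=\Psi_M(\mathcal F_n)$ up to a discretization/rounding term arising from the grid $\{n_0/n_1,\dots,n_1/n_1\}$ with $n_0/n_1=\lfloor n_1s_0\rfloor/n_1\to s_0$, which is asymptotically negligible because the limit has continuous paths. By the continuous mapping theorem it then suffices to show that $\Psi_L$ and $\Psi_M$ are measurable and continuous at $P_{\mathcal F^{\delta}}$-almost every path.

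I would then record two structural facts about $\mathcal F^{\delta}$. First, the drift term
\[
(s,t,r_2,r_3)\mapsto\frac{1}{\sqrt{\overline{\tau}_1\overline{\tau}_2\overline{\tau}_3}}\int_0^{s\overline{\tau}_1}\!\!\int_0^{\overline{\tau}_2}\!\!\int_0^{\overline{\tau}_3}\varphi(t-z_1,r_2-z_2,r_3-z_3)\,\delta(z_1,z_2,z_3)\,\mathrm d z_3\,\mathrm d z_2\,\mathrm d z_1
\]
is continuous in all of its arguments (the integrand is bounded on the compact domain, and continuity follows from continuity of $\varphi$ and $\delta$ together with dominated convergence); hence, by Lemma~\ref{covstructure}(b), $\mathcal F^{\delta}=\mathcal F+(\text{drift})$ has continuous sample paths, and so does the running supremum process $s\mapsto\sup|\mathcal F^{\delta}(s,\cdot,\cdot,\cdot)|$. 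Second, $\mathcal F^{\delta}$ is again a Gaussian process: it is the sum of the mean-zero Gaussian process $\mathcal F$ of Lemma~\ref{covstructure}(a) and a fixed deterministic function, so it has the same covariance kernel \eqref{covfct} and a shifted, continuous mean. In particular it is non-degenerate, since $\sigma^2\in(0,\infty)$ and the stochastic integral defining $\mathcal F$ has strictly positive variance wherever $\varphi$ does not vanish identically along the integration region.

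With these facts in hand, continuity of $\Psi_L$ (and $\Psi_M$) at a path $x$ follows from the standard first-passage-time criterion: $\Psi_L$ is continuous at $x$ provided $x$ has continuous paths and, for every $\eta>0$, the running supremum of $|x|$ strictly exceeds $c_L$ somewhere on $[s_0,\Psi_L(x)+\eta]$, i.e.\ $c_L$ is not merely tangentially touched. It therefore remains to verify that this ``transversal crossing'' event has probability one under $P_{\mathcal F^{\delta}}$, and this is the step I expect to be the main obstacle. The argument is identical to the one used in the null case in the proof of Lemma~\ref{cltdet}: since the supremum of a non-degenerate, continuous Gaussian field over any fixed nonempty region has an atomless distribution (so in particular no atom at $c_L$), and since continuity of the sample paths combined with a covering argument over rational times rules out that with positive probability the running supremum equals $c_L$ on a nondegenerate interval or at a strict local maximum equal to $c_L$, the set of paths at which $\Psi_L$ is discontinuous is $P_{\mathcal F^{\delta}}$-null; the learning-sample condition \eqref{learningsample} enters here exactly as in Lemma~\ref{cltdet} to control behaviour near $s_0$. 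Crucially, adding the fixed continuous drift alters neither the covariance (hence non-degeneracy) nor the path continuity, so the argument transfers verbatim. The continuous mapping theorem then yields $\mathcal L_n/n_1\Rightarrow\Psi_L(\mathcal F^{\delta})$ and $\mathcal M_n/n_1\Rightarrow\Psi_M(\mathcal F^{\delta})$, which are precisely the limits obtained from Lemma~\ref{cltdet} upon replacing $\mathcal F$ by $\mathcal F^{\delta}$.
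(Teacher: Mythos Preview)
Your approach is essentially the same as the paper's: the paper simply says the result follows from Theorem~\ref{detalt} and the continuous mapping theorem, and you have fleshed out precisely this argument by defining the first-passage functionals, verifying path continuity of $\mathcal F^{\delta}$, and checking the a.s.\ continuity conditions needed for the CMT. One minor remark: under Assumption~2(b) the function $\delta$ need not be continuous, so your justification of drift continuity should appeal only to boundedness of $\delta$ (which BVHK implies) together with continuity of $\varphi$ and dominated convergence, rather than to continuity of $\delta$ itself; the conclusion is unaffected.
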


\subsubsection{Random Disturbances}
We now consider random disturbances, i.e.\ we require that our data obey the model
\begin{align}\label{modelalternative2}
y_{i_1,i_2,i_3}=f_n\left(i_1\tau_1,i_2\tau_2,i_3\tau_3;\omega\right)+\varepsilon_{i_1,i_2,i_3}
\end{align}
where now $f_n(t,r_2,r_3;\omega)\to f_0(t,r_2,r_3)$ a.s. as $n\to\infty$.
To be more precise we require that
\begin{align}\label{local2}
f_n(t,r_2,r_3;\omega)-f_0(t,r_2,r_3)=\frac{\Delta(t,r_2,r_3;\omega)}{n^{\beta}}
\end{align}
for $\beta>0$, and $\Delta(t,r_2,r_3;\omega)$ being a random function that is independent of the random field $\{\varepsilon_{i_1,i_2,i_3}\}$. Moreover, we assume that $\Delta(t,r_2,r_3;\omega)\neq 0$ a.s.
We require that $\Delta(t,r_2,r_3;\omega)$ meets the following assumption.

\textbf{Assumption 3:} Let $\Delta(t,r_2,r_3;\omega)$ be an a.s.\ nonzero random function defined on $[0,\overline{\tau}_1]\times[0,\overline{\tau}_2]\times[0,\overline{\tau}_3]\times\Omega$ that is independent of the random field $\{\varepsilon_{i_1,i_2,i_3}\}$ and whose sample paths are\\[-4ex]
\begin{itemize}
	\item[(a)] continuous a.s. 
	\item[(b)] of bounded variation in the sense of Hardy and Krause a.s. 
\end{itemize}

Then we can describe the asymptotic behaviour of the statistic $\mathcal{F}_n(s,t,r_2,r_3)$ under random local alternatives.
\begin{theorem}\label{randalt}
Assume the sampling model in (\ref{modelalternative2}) with the local alternative given in (\ref{local2}) where $\beta=3/2$. Let Assumption~1 hold and suppose that either Assumption~3 (a) and $n\tau_j\to\overline{\tau}_j$, $n\to\infty$, $j=1,2,3,$ or Assumption~3 (b) and $n\tau_j=\overline{\tau}_j$, $j=1,2,3,$ is satisfied.
Then, we have
\[
\mathcal{F}_n(s,t,r_2,r_3)\Rightarrow\mathcal{F}^{\Delta}(s,t,r_2,r_3),
\]
as $\min_{1\leq i\leq 3}n_i\to\infty$ for $0<s_0\leq s\leq 1$, $t\in[0,\overline{\tau}_1]$, $r_2\in[0,\overline{\tau}_2]$ and $r_3\in[0,\overline{\tau}_3]$.

The limit stochastic process $\mathcal F^{\Delta}(s,t,r_2,r_3)$ is given by
\begin{align*}
\mathcal{F}^{\Delta}(s,t,r_2,r_3)\coloneqq\mathcal{F}(s,t,r_2,r_3)+\frac{1}{\sqrt{\overline{\tau}_1\overline{\tau}_2\overline{\tau}_3}}
\int_0^{s\overline{\tau}_1}\int_0^{\overline{\tau}_2}\int_0^{\overline{\tau}_2}
\!&\varphi\left(t-z_1,r_2-z_2,r_3-z_3\right)\\&\Delta(z_1,z_2,z_3;\omega)\,\mathrm{d}z_3\mathrm{d}z_2\mathrm{d}z_1.
\end{align*}
\end{theorem}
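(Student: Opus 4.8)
\emph{Proof plan for Theorem~\ref{randalt}.}

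The plan is to reduce the random-disturbance case to the already established deterministic-disturbance result, Theorem~\ref{detalt}, by conditioning on the random function $\Delta$, and then to lift the resulting conditional weak convergence to an unconditional statement using the independence of $\Delta$ and the noise field. First I would split the detector process. Under the model (\ref{modelalternative2}) with (\ref{local2}) and $\beta=3/2$ one has, for every sample point,
\[
y_{l_1,l_2,l_3}-f_0(l_1\tau_1,l_2\tau_2,l_3\tau_3)=\varepsilon_{l_1,l_2,l_3}+n^{-3/2}\,\Delta(l_1\tau_1,l_2\tau_2,l_3\tau_3;\omega),
\]
so that $\mathcal F_n=\mathcal F_n^{(0)}+R_n$, where $\mathcal F_n^{(0)}$ is the partial sum process (\ref{detectionprocess}) built from the $\varepsilon_{\bm i}$ alone (i.e.\ the process analysed in Theorem~\ref{FCLTNull}) and $R_n(s,t,r_2,r_3;\omega)=\sqrt{\tau_1\tau_2\tau_3}\,n^{-3/2}\sum_{l_1\le\lfloor n_1s\rfloor}\sum_{l_2\le n_2}\sum_{l_3\le n_3}\Delta(l_1\tau_1,l_2\tau_2,l_3\tau_3;\omega)\,\varphi(t-l_1\tau_1,r_2-l_2\tau_2,r_3-l_3\tau_3)$ depends only on $\Delta$. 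Rewriting $R_n$ as $(n^{3/2}\sqrt{\tau_1\tau_2\tau_3})^{-1}$ times a Riemann sum of mesh $\tau_j\to0$ and using $n\tau_j=\overline\tau_j$ (case (b)) resp.\ $n\tau_j\to\overline\tau_j$ (case (a)) shows that the prefactor tends to $(\overline\tau_1\overline\tau_2\overline\tau_3)^{-1/2}$ and the Riemann sum to $\int_0^{s\overline\tau_1}\int_0^{\overline\tau_2}\int_0^{\overline\tau_3}\varphi(t-z_1,r_2-z_2,r_3-z_3)\,\Delta(z_1,z_2,z_3;\omega)\,\mathrm dz_3\mathrm dz_2\mathrm dz_1=:R_\infty(s,t,r_2,r_3;\omega)$ — this is exactly the computation in the proof of Theorem~\ref{detalt} with the deterministic $\delta$ replaced by the fixed function $\Delta(\,\cdot\,;\omega)$, and $R_\infty(\,\cdot\,;\omega)$ is continuous since $\varphi$ is.

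Next I would condition on the randomness generating $\Delta$. By Assumption~3, for almost every $\omega$ the map $z\mapsto\Delta(z;\omega)$ is continuous (case (a)) resp.\ of bounded variation in the sense of Hardy and Krause (case (b)); for such $\omega$ the decomposition $\mathcal F_n=\mathcal F_n^{(0)}+R_n(\,\cdot\,;\omega)$ fits the hypotheses of Theorem~\ref{detalt} verbatim, with $\delta:=\Delta(\,\cdot\,;\omega)$. Hence, conditionally on $\omega$ and with respect to the noise randomness, $\mathcal F_n\Rightarrow\mathcal F+R_\infty(\,\cdot\,;\omega)$. Since $\Delta$ is independent of $\{\varepsilon_{\bm i}\}$, the limiting Gaussian process $\mathcal F$ (driven by the noise Brownian motion) is independent of $R_\infty$, so the unconditional limit is $\mathcal F+R_\infty=\mathcal F^{\Delta}$ as stated, with the two summands independent.

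Finally I would transfer the conditional convergence to an unconditional one. For any bounded continuous functional $h$ on the relevant path space, conditional weak convergence gives $\mathbb E[h(\mathcal F_n)\mid\omega]\to\mathbb E[h(\mathcal F^{\Delta})\mid\omega]$ for a.e.\ $\omega$, and since $h$ is bounded, dominated convergence yields $\mathbb E[h(\mathcal F_n)]\to\mathbb E[h(\mathcal F^{\Delta})]$, i.e.\ $\mathcal F_n\Rightarrow\mathcal F^{\Delta}$. Equivalently one may run a Slutsky-type argument: $\mathcal F_n^{(0)}\Rightarrow\mathcal F$ by Theorem~\ref{FCLTNull}, $R_n\to R_\infty$ almost surely in the path topology (apply the deterministic estimate of the previous paragraph $\omega$-wise), $\mathcal F_n^{(0)}$ is independent of $R_\infty$ for every $n$, so $\mathcal F_n^{(0)}+R_\infty\Rightarrow\mathcal F+R_\infty$ and then adding the asymptotically negligible perturbation $R_n-R_\infty$ preserves the limit. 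I expect the only delicate point to be this lifting step — one must check measurability of $\omega\mapsto\mathbb E[h(\mathcal F_n)\mid\omega]$ and that everything stays inside the (not necessarily separable) higher-dimensional Skorohod space used throughout; the continuity of $R_\infty$ and the standard Fubini/dominated-convergence machinery, together with the function-space conventions of Appendix~\ref{details}, make this routine, and the remaining computations are identical to those in the proof of Theorem~\ref{detalt}.
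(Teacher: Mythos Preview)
Your proposal is correct and follows essentially the same approach as the paper, whose proof consists of the single sentence ``The proof is analogously to the one of Theorem~\ref{detalt}.'' You have in fact supplied considerably more detail than the paper does --- in particular the conditioning-and-lifting step via independence of $\Delta$ and the noise field, together with the Slutsky/dominated-convergence argument --- which is exactly what is needed to make ``analogously'' rigorous.
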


\section{Extensions: Weighting functions and unknown reference signals}\label{Ext}
In this section we want to demonstrate that we can easily extend some of the results of the previous section into several directions with only slight modifications. We give two examples that shall show the great flexibility and applicability of our results. We point out, among other things, that the detector statistic in (\ref{detectionprocess}) can serve with small changes not only as a detector for changes in time, but also as a detector for the concrete location where a change takes place. Moreover, we can extend the result in Theorem~\ref{FCLTNull} to allow for unknown reference signals, by appropriate centering of the observations.

\subsection{Additional Weighting Functions}
We begin with a generalization of the detector statistic in (\ref{detectionprocess}) in order to be able to detect the position of a change. This can be achieved by adding a suitable weighting function $w$ for the different pixels of the image and leads to the sequential monitoring process
\begin{align*}
\mathcal{F}^w_n(s,t,r_2,r_3)=\sqrt{\tau_1\tau_2\tau_3}
\sum_{l_1=1}^{\lfloor n_1s\rfloor}\sum_{l_2=1}^{n_2}&\sum_{l_3=1}^{n_3}\left[y_{l_1,l_2,l_3}-f_0(l_1\tau_1,l_2\tau_2,l_3\tau_3)\right]\\&
\varphi\left(t-l_1\tau_1,r_2-l_2\tau_2,r_3-l_3\tau_3\right)w(l_2\tau_2,l_3\tau_3,r_2,r_3)
\end{align*}
for $0<s_0\leq s\leq 1, t\in[0,\overline{\tau}_1], r_2\in[0,\overline{\tau}_2], r_3\in[0,\overline{\tau}_3]$. Before we get more specific about possible forms of $w$, we first want to reformulate Theorem~\ref{FCLTNull} for the new detection process $\mathcal{F}^w_n$.
\begin{theorem}\label{basicresultmvbw2}
Let $w$ be a continuous function with
\[
\sup_{r_2\in[0,\overline{\tau}_2],r_3\in[0,\overline{\tau}_3]}V_{HK}(w(\cdot,\cdot,r_2,r_3))<\infty.
\]
Suppose the noise process $\{\varepsilon_{\bm i}=\varepsilon_{i_1,i_2,i_3}\}$ meets Assumption 1. We assume that the sampling periods fulfill $n_j\tau_j\to\overline{\tau}_j$, $j=1,2,3,$ as $\min_{1\leq i\leq 3}n_i\to\infty$. Then, under the null hypothesis $H_0$, we have
\[
\mathcal{F}^w_n(s,t,r_2,r_3)\Rightarrow\mathcal{F}^w(s,t,r_2,r_3),
\]
as $\min_{1\leq i\leq 3}n_i\to\infty$ for $0<s_0\leq s\leq 1$, $t\in[0,\overline{\tau}_1]$, $r_2\in[0,\overline{\tau}_2]$, and $r_3\in[0,\overline{\tau}_3]$.

The limit stochastic process $\mathcal{F}^w(s,t,r_2,r_3)$ is of the form
\begin{align*}
\mathcal{F}^w(s,t,r_2,r_3)\coloneqq\sqrt{\overline{\tau}_1\overline{\tau}_2\overline{\tau}_3}\,\sigma
\int_0^s\int_0^1\int_0^1
&\!\varphi\left(t-\overline{\tau}_1z_1,r_2-\overline{\tau}_2z_2,r_3-\overline{\tau}_3z_3\right)\\&w(\overline{\tau}_2z_2,\overline{\tau}_3z_3,r_2,r_3)
\,\mathrm{d}B(z_1,z_2,z_3),
\end{align*}
where $B(z_1,z_2,z_3)$ is the standard Brownian motion on $[0,1]^3$.
\end{theorem}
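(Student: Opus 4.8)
The proof runs parallel to that of Theorem~\ref{FCLTNull}; the only new feature is the extra factor $w(l_2\tau_2,l_3\tau_3,r_2,r_3)$, which we absorb into the reconstruction kernel. Under $H_0$ the bracket $y_{l_1,l_2,l_3}-f_0(l_1\tau_1,l_2\tau_2,l_3\tau_3)$ equals $\varepsilon_{l_1,l_2,l_3}$, so that, after the substitution $z_j=l_j/n_j$ and using $\sqrt{\tau_1\tau_2\tau_3}=(n_1n_2n_3)^{-1/2}\sqrt{\overline\tau_1\overline\tau_2\overline\tau_3}\,(1+o(1))$ together with $n_j\tau_j\to\overline\tau_j$, one rewrites the partial sum as a multivariate Riemann--Stieltjes sum against the noise partial sum field $Z_n$ of Assumption~1:
\[
\mathcal{F}^w_n(s,t,r_2,r_3)=\sqrt{\overline\tau_1\overline\tau_2\overline\tau_3}\int_0^s\!\!\int_0^1\!\!\int_0^1\varphi^w_{t,r_2,r_3}(z_1,z_2,z_3)\,\mathrm dZ_n(z_1,z_2,z_3)+o_P(1),
\]
uniformly in $(s,t,r_2,r_3)$, where
\[
\varphi^w_{t,r_2,r_3}(z_1,z_2,z_3)\coloneqq\varphi\bigl(t-\overline\tau_1z_1,\,r_2-\overline\tau_2z_2,\,r_3-\overline\tau_3z_3\bigr)\,w\bigl(\overline\tau_2z_2,\overline\tau_3z_3,r_2,r_3\bigr)
\]
and the integral is the multivariate Riemann--Stieltjes integral of Appendix~\ref{details}. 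This is exactly the representation used for Theorem~\ref{FCLTNull}, now with $\varphi$ replaced by $\varphi^w$.

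It therefore suffices to verify that $\varphi^w_{t,r_2,r_3}$, regarded as a function of $(z_1,z_2,z_3)$ with $(t,r_2,r_3)$ as parameters, has the two properties of $\varphi$ that the proof of Theorem~\ref{FCLTNull} exploits: joint continuity, and finite Hardy--Krause variation in $(z_1,z_2,z_3)$ that is bounded uniformly in $(t,r_2,r_3)$. Continuity is immediate since $\varphi$ is a product of smooth sinc factors and $w$ is continuous. For the variation bound one combines a Leibniz-type product inequality for the Hardy--Krause variation with the facts that $\varphi$, being a tensor product of smooth functions restricted to a compact rectangle, has finite Hardy--Krause variation uniformly over the compact range of the shifts $(t,r_2,r_3)$, and that $w$ is bounded with $\sup_{r_2,r_3}V_{HK}(w(\cdot,\cdot,r_2,r_3))<\infty$ by hypothesis. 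These two properties ensure, on the one hand, that the discretization error separating $\mathcal{F}^w_n$ from the displayed integral is uniformly $o_P(1)$, and, on the other hand, that $h\mapsto\int_0^s\!\int_0^1\!\int_0^1\varphi^w_{t,r_2,r_3}\,\mathrm dh$ defines a functional that is continuous jointly in $(s,t,r_2,r_3)$ on the function space of Appendix~\ref{details}.

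From here the conclusion follows as for Theorem~\ref{FCLTNull}: by Assumption~1 we have $Z_n\Rightarrow\sigma B$ in $D[0,1]^3$, and the continuous mapping theorem applied to the above functional gives
\[
\mathcal{F}^w_n(s,t,r_2,r_3)\Rightarrow\sqrt{\overline\tau_1\overline\tau_2\overline\tau_3}\,\sigma\int_0^s\!\!\int_0^1\!\!\int_0^1\varphi^w_{t,r_2,r_3}(z_1,z_2,z_3)\,\mathrm dB(z_1,z_2,z_3)=\mathcal{F}^w(s,t,r_2,r_3).
\]
Tightness over the four-dimensional index set $(s,t,r_2,r_3)$, needed for weak convergence in the relevant space, is inherited from the corresponding estimates in the proof of Theorem~\ref{FCLTNull}, now using the uniform Hardy--Krause bound and the uniform modulus of continuity of $\varphi^w$ in place of those for $\varphi$.

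I expect the main obstacle to be the uniform Hardy--Krause bound on $\varphi^w_{t,r_2,r_3}$: since this variation does not behave multiplicatively in the naive way, one must invoke the correct product rule, which expresses the Vitali variation of every lower-dimensional face of a product through the variations of the two factors, and then track all the resulting mixed terms and check that each is finite and bounded uniformly in the parameters. The continuity of $w$ together with the uniform bound on the $V_{HK}$ of its $(\cdot,\cdot,r_2,r_3)$-sections is exactly what is needed to make this work.
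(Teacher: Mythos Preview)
Your proposal is correct and follows the same approach as the paper. The paper's own proof is a single sentence: ``As $w$ fulfills the same assumptions as $\varphi$ one can adopt the proof of Theorem~\ref{FCLTNull}''; your write-up simply unpacks this, absorbing $w$ into the kernel and checking that the product $\varphi^w_{t,r_2,r_3}$ inherits continuity and uniform Hardy--Krause variation (the paper's Lemma~\ref{phi_wBVHK} relies on the same product rule from Owen that you invoke), after which the continuous mapping theorem via Lemma~\ref{lemmaSI} finishes the argument exactly as before. Your closing remark about tightness is unnecessary, since in the paper's framework the functional $\Lambda$ is shown to be Skorohod-continuous at continuous limits and the continuous mapping theorem delivers weak convergence in the four-parameter Skorohod space directly, without a separate tightness step.
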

Now the question arises which forms would be suitable for $w$. To answer this question it is useful to know the approximate form of the change that one wants to detect. If the aim is, for example, to detect a simple rectangle with length $c_2>0$ and width $c_3>0$ that occurs at a certain point in time, one could define $w$ for $\delta>0$ as
\begin{align}\label{charfctw}
w(z_2,z_3,r_2,r_3)=
\begin{cases}
1,  & \text{if } |z_2-r_2|\leq c_2, |z_3-r_3|\leq c_3,\\
0,  & \text{if } |z_2-r_2|\geq c_2+\delta, |z_3-r_3|\geq c_3+\delta,\\
\textnormal{smooth}, & \text{otherwise}.
\end{cases}
\end{align}
Then, one could define suitable detectors as before, e.g.\ the local detector as
\[\mathcal L^w_n\coloneqq
	\min\left\{n_0\leq k\leq n_1: \max_{l\in\{0,\ldots,n_2\},\atop m\in\{0,\ldots,n_3\}}\left|\mathcal{F}^w_n\left(\frac{k}{n_1},\frac{\overline{\tau}_1k}{n_1},\frac{\overline{\tau}_2l}{n_2},\frac{\overline{\tau}_3m}{n_3}\right)\right|>c^w_L\right\}
\]
and the global maximum norm detector as
\[\mathcal M^w_n\coloneqq
	\min\left\{n_0\leq k\leq n_1: \max_{0\leq t\leq\frac{\overline{\tau}_1k}{n_1}}\max_{r_2\in[0,\overline{\tau}_2],\atop r_3\in[0,\overline{\tau}_3]}\left|\mathcal{F}^w_n\left(\frac{k}{n_1},t,r_2,r_3\right)\right|>c^w_M\right\}
\]
for control limits $c^w_L>0$ and $c^w_M>0$.
Again, the application of the continuous mapping theorem leads to central limit theorems for these detectors. If, for example, the local detector $\mathcal L_n$ exceeds its control limit $c^w_L$ for $(k^{\star},l^{\star},m^{\star})$, we know that the rectangle occured on the $\overline{\tau}_1k^{\star}/n_1$th image frame at position $\left(\overline{\tau}_2l^{\star}/n_2,\overline{\tau}_3m^{\star}/n_3\right)$.
\begin{remark}
It is important that one chooses the weight function $w$ not only as a characteristic function, as then the continuity assumption of Theorem~\ref{basicresultmvbw2} on $w$ is not fulfilled. Moreover, characteristic functions with a domain that is not parallel to the axes have infinite variation which encourages the `smoothing' of $w$ as well, cf.\ \cite{Owen}, p.~14.  If one also wants to allow for characteristic functions without smoothing one has to consider the integrals as It\^{o} integrals instead of considering them as Riemann Stieltjes integrals as it is done in this work.
\end{remark}
The detection of more complex forms for changes than rectangles is possible as well by choosing corresponding domains for the smoothed characteristic function in (\ref{charfctw}).

\subsection{The Case of an Unknown Reference Signal}
If the reference signal is unknown to us, the above procedures are not applicable. To overcome this drawback, we shall assume in the sequel that the reference signal is time-constant, i.e.\
\begin{align}\label{timeconstant}
f_0(t,r_2,r_3)=f_0(0,r_2,r_3),\quad (r_2,r_3)\in[0,\overline{\tau}_2]\times[0,\overline{\tau}_3]
\end{align}
holds true for all $t\in[0,\overline{\tau}_1]$. In this case one may center the spatial-temporal observations at appropriately defined averages of previous observations. Here one can either use the learning sample 
\[
\left\{y_{\bm i}: \bm i\in\left\{1,\ldots,n_0\right\}\times\left\{1,\ldots,n_2\right\}\times\left\{1,\ldots,n_3\right\}\right\}
\]
or include all observations available at the current time instant. For $(l_2,l_3)\in\left\{1,\ldots,n_2\right\}\times\left\{1,\ldots,n_3\right\}$ let us define
\[
\overline{y}_{\cdot,l_2,l_3}\coloneqq\frac{1}{\left\lfloor n_1s_0\right\rfloor}\sum_{l_1=1}^{\left\lfloor n_1s_0\right\rfloor}y_{l_1,l_2,l_3},
\]
and consider
\begin{align*}
\mathcal{F}^c_n(s,t,r_2,r_3)\coloneqq\sqrt{\tau_1\tau_2\tau_3}
\sum_{l_1=1}^{\lfloor n_1s\rfloor}\sum_{l_2=1}^{n_2}\sum_{l_3=1}^{n_3}&(y_{l_1,l_2,l_3}-\overline{y}_{\cdot,l_2,l_3})
\varphi\left(t-l_1\tau_1,r_2-l_2\tau_2,r_3-l_3\tau_3\right).
\end{align*}
We then get the following theorem.
\begin{theorem}\label{unknownrefsignal}
Under the conditions of Theorem~\ref{FCLTNull} and if the reference signal fulfills (\ref{timeconstant}), we have under the null hypothesis
\[
\mathcal{F}^c_n(s,t,r_2,r_3)\Rightarrow\mathcal{F}^c(s,t,r_2,r_3),
\]
as $\min_{1\leq i\leq 3}n_i\to\infty$ for $0<s_0\leq s\leq 1$, $t\in[0,\overline{\tau}_1]$, $r_2\in[0,\overline{\tau}_2]$, and $r_3\in[0,\overline{\tau}_3]$.

The limit stochastic process $\mathcal{F}^c(s,t,r_2,r_3)$ is of the form
\begin{align*}
\mathcal{F}^c(s,t,r_2,r_3)\coloneqq\sqrt{\overline{\tau}_1\overline{\tau}_2\overline{\tau}_3}\sigma\int_0^s\int_0^1\int_0^1\!\varphi\left(t-\overline{\tau}_1z_1,r_2-\overline{\tau}_2z_2,r_3-\overline{\tau}_3z_3\right)\mathrm{d}B^c(z_1,z_2,z_3),
\end{align*}
where 
\begin{align*}
B^c(z_1,z_2,z_3)\coloneqq B(z_1,z_2,z_3)-\frac{z_1}{s_0}B(s_0,z_2,z_3)
\end{align*}
for $z_1\in[s_0,1]$, $(z_2,z_3)\in[0,1]^2$.
\end{theorem}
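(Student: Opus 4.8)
The plan is to reduce the assertion to Theorem \ref{FCLTNull} by decomposing $\mathcal{F}^c_n$ into the already-understood process $\mathcal{F}_n$ plus a correction term coming from the estimated (rather than known) reference signal, and then to identify the joint limit. Write $y_{l_1,l_2,l_3}=f_0(0,l_2\tau_2,l_3\tau_3)+\varepsilon_{l_1,l_2,l_3}$ under $H_0$ and using the time-constancy assumption (\ref{timeconstant}). Then $y_{l_1,l_2,l_3}-\overline{y}_{\cdot,l_2,l_3}=\varepsilon_{l_1,l_2,l_3}-\overline{\varepsilon}_{\cdot,l_2,l_3}$, where $\overline{\varepsilon}_{\cdot,l_2,l_3}=\lfloor n_1 s_0\rfloor^{-1}\sum_{i_1=1}^{\lfloor n_1 s_0\rfloor}\varepsilon_{i_1,l_2,l_3}$; in particular the deterministic part cancels exactly, so the analysis is driven purely by the noise field. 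Hence
\[
\mathcal{F}^c_n(s,t,r_2,r_3)=\widetilde{\mathcal{F}}_n(s,t,r_2,r_3)-\frac{\lfloor n_1 s\rfloor}{\lfloor n_1 s_0\rfloor}\,\widetilde{\mathcal{F}}_n\!\left(s_0,t,r_2,r_3\right),
\]
where $\widetilde{\mathcal{F}}_n$ is the process from (\ref{detectionprocess}) written with $\varepsilon$ in place of $y-f_0$ (which, under $H_0$ with known $f_0$, is exactly $\mathcal{F}_n$). The coefficient $\lfloor n_1 s\rfloor/\lfloor n_1 s_0\rfloor\to s/s_0$ uniformly for $s\in[s_0,1]$.

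The key steps, in order: (i) establish the algebraic decomposition above, checking carefully that the inner subtraction of $\overline{\varepsilon}_{\cdot,l_2,l_3}$ reorganizes into the value of $\widetilde{\mathcal F}_n$ at the argument $s_0$ times the stated scalar — this uses that $\varphi$ and the summation over $l_2,l_3$ do not involve $l_1$, so the average over $l_1\le\lfloor n_1 s_0\rfloor$ factors out; (ii) invoke Theorem \ref{FCLTNull} to get $\widetilde{\mathcal F}_n\Rightarrow\mathcal F$ in the relevant Skorohod space, but in fact one needs the \emph{joint} weak convergence of $\big(\widetilde{\mathcal F}_n(s,\cdot),\,\widetilde{\mathcal F}_n(s_0,\cdot)\big)$, which is immediate since the second coordinate is a continuous coordinate projection (evaluation at $s=s_0$) of the first, and $\mathcal F$ has continuous paths by Lemma \ref{covstructure}(b); (iii) apply the continuous mapping theorem to the map $(x(\cdot),y(\cdot))\mapsto x(s,t,r_2,r_3)-\tfrac{s}{s_0}y(t,r_2,r_3)$, together with the uniform convergence of the scalar coefficients, to conclude $\mathcal{F}^c_n\Rightarrow \mathcal F(s,t,r_2,r_3)-\tfrac{s}{s_0}\mathcal F(s_0,t,r_2,r_3)$; (iv) identify this limit with the stated $\mathcal F^c$, i.e.\ verify that
\[
\mathcal F(s,t,r_2,r_3)-\frac{s}{s_0}\mathcal F(s_0,t,r_2,r_3)
=\sqrt{\overline\tau_1\overline\tau_2\overline\tau_3}\,\sigma\int_0^s\!\!\int_0^1\!\!\int_0^1\varphi(t-\overline\tau_1 z_1,r_2-\overline\tau_2 z_2,r_3-\overline\tau_3 z_3)\,\mathrm dB^c(z_1,z_2,z_3),
\]
which reduces, by linearity of the Riemann--Stieltjes integral in the integrator, to checking that $B(z_1,z_2,z_3)-\tfrac{z_1}{s_0}B(s_0,z_2,z_3)$ is the correct integrator; here one should note that the rescaling $z_1=\lfloor n_1 s\rfloor/n_1$ versus $z_1=\overline\tau_1 z_1$ inside $\varphi$ is exactly the change of variables already used in the proof of Theorem \ref{FCLTNull}, so no new computation is needed beyond bookkeeping. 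Finally one records that $B^c$ so defined is a well-defined continuous process (a ``tied-down in the first coordinate'' Brownian sheet) so that $\mathcal F^c$ is a bona fide Gaussian limit.

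The main obstacle is step (i) together with the joint-convergence bookkeeping in step (ii): one must be scrupulous that under the null hypothesis the centering $\overline{y}_{\cdot,l_2,l_3}$ removes exactly $f_0(0,l_2\tau_2,l_3\tau_3)$ and nothing more (this is precisely where time-constancy (\ref{timeconstant}) is used, and where the learning-sample assumption (\ref{learningsample}) guarantees the average is computed over a stretch with no change), and that the resulting two pieces are jointly, not merely marginally, weakly convergent — the latter being free because evaluation at the fixed level $s_0$ is continuous on the path space in which Theorem \ref{FCLTNull} delivers convergence. Everything downstream is an application of the continuous mapping theorem and linearity of the stochastic integral, and requires no estimates beyond those already contained in the proof of Theorem \ref{FCLTNull}.
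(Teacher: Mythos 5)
There is a genuine error in step (i), and it propagates through steps (iii) and (iv). Your claimed identity
\[
\mathcal{F}^c_n(s,t,r_2,r_3)=\widetilde{\mathcal{F}}_n(s,t,r_2,r_3)-\frac{\lfloor n_1 s\rfloor}{\lfloor n_1 s_0\rfloor}\,\widetilde{\mathcal{F}}_n(s_0,t,r_2,r_3)
\]
is false. Writing out the correction term, you get
\[
-\sqrt{\tau_1\tau_2\tau_3}\Bigl(\sum_{l_1=1}^{\lfloor n_1 s\rfloor}\widetilde{\varphi}_1(t-l_1\tau_1)\Bigr)\sum_{l_2=1}^{n_2}\sum_{l_3=1}^{n_3}\frac{1}{\lfloor n_1 s_0\rfloor}\sum_{i_1=1}^{\lfloor n_1 s_0\rfloor}\varepsilon_{i_1,l_2,l_3}\,\widetilde{\varphi}_2(r_2-l_2\tau_2)\widetilde{\varphi}_3(r_3-l_3\tau_3):
\]
the time kernel $\widetilde{\varphi}_1$ is attached to the \emph{outer} index $l_1\le\lfloor n_1s\rfloor$, while the noise average over the learning sample is \emph{unweighted} in the time direction. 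In $\widetilde{\mathcal F}_n(s_0,t,r_2,r_3)$, by contrast, each $\varepsilon_{i_1,l_2,l_3}$ with $i_1\le\lfloor n_1s_0\rfloor$ carries the weight $\widetilde{\varphi}_1(t-i_1\tau_1)$. Since $\widetilde{\varphi}_1$ is not constant, these are different objects, and no scalar multiple of $\widetilde{\mathcal F}_n(s_0,\cdot)$ reproduces the correction term. Consequently the limit you identify, $\mathcal F(s,t,r_2,r_3)-\tfrac{s}{s_0}\mathcal F(s_0,t,r_2,r_3)$, is not the theorem's $\mathcal F^c$; your step (iv) claim that the two coincide ``by linearity in the integrator'' is also incorrect, because substituting $B^c=B-\tfrac{z_1}{s_0}B(s_0,z_2,z_3)$ into the Riemann--Stieltjes integral produces a term $-\tfrac{1}{s_0}\int_0^s\varphi_1(t-\overline\tau_1z_1)\,\mathrm dz_1$ (a Lebesgue integral in $z_1$) against the increments of $B(s_0,\cdot,\cdot)$ in $(z_2,z_3)$ — not $\tfrac{s}{s_0}$ times the stochastic integral of $\varphi$ over $[0,s_0]$. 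The two candidate limits even have different covariance functions.

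The repair is to perform your decomposition one level down, on the \emph{unweighted} partial sum field, where it is exact: with $Z_n$ as in Assumption~1,
\[
(n_1n_2n_3)^{-1/2}\sum_{i=1}^{\lfloor n_1v_1\rfloor}\sum_{j=1}^{\lfloor n_2v_2\rfloor}\sum_{k=1}^{\lfloor n_3v_3\rfloor}(y_{i,j,k}-\overline y_{\cdot,j,k})
= Z_n(v_1,v_2,v_3)-\frac{\lfloor n_1v_1\rfloor}{\lfloor n_1s_0\rfloor}Z_n(s_0,v_2,v_3)\Rightarrow\sigma B^c(v_1,v_2,v_3),
\]
which \emph{is} a continuous functional of $Z_n$ (evaluation at $v_1=s_0$ plus a uniformly convergent scalar factor, exactly the joint-convergence argument you describe). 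Then apply the integral functional $\Lambda$ and the continuous-mapping/summation-by-parts machinery from the proof of Theorem~\ref{FCLTNull} to this centered partial sum process; since $B^c$ has continuous paths, Lemma~\ref{lemmaSI} applies and yields $\mathcal F^c_n\Rightarrow\sqrt{\overline\tau_1\overline\tau_2\overline\tau_3}\,\sigma\int\varphi\,\mathrm dB^c$ directly. This is the paper's route; your observation that the time-constancy assumption makes the deterministic part cancel exactly is correct and is indeed where (\ref{timeconstant}) enters.
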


\section{Simulation of the detection process}\label{sim}
In this section we want to investigate the performance of the global maximum norm detector defined in (\ref{globmvb}). Before we do so, we have to explain how we calculate an appropriate control limit $c_M$ such that we can guarantee that the asymptotic false alarm probability is smaller than a given $\alpha$. We first note that for the global maximum norm detector $\mathcal M_n$ a type one error occurs if $\mathcal M_n/n_1<1$. Next, we adapt Theorem~2 of \cite{PawlakSteland} to our situation in order to obtain a more explicit formula for the type one error. For that we assume condition~(\ref{learningsample}), namely that
\begin{align*}
f(t,r_2,r_3)=f_0(t,r_2,r_3),\ 0\leq t\leq s_0\overline{\tau}_1, 0<s_0<1,
\end{align*}
i.e.\ we assume that there is no initial change in the signal.
\begin{theorem}
Let the condition in (\ref{learningsample}) hold. Under the assumptions of Theorem~\ref{FCLTNull} we have
\begin{align*}
\quad\lim_{n\to\infty}P_0\left(\frac{\mathcal M_n}{n_1}<1\right)=P\left(\sup_{s_0<s\leq 1}\sup_{0\leq t\leq s\overline{\tau}_1}\sup_{r_2\in[0,\overline{\tau}_2],\atop r_3\in[0,\overline{\tau}_3]}\left|\mathcal{F}(s,t,r_2,r_3)\right|>c_M\right),
\end{align*}
where $\mathcal{F}(s,t,r_2,r_3)$ is a zero mean Gaussian process with the covariance function defined in (\ref{covfct}).
\end{theorem}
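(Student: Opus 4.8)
The statement to be proved equates the asymptotic type-one error probability $\lim_{n\to\infty}P_0(\mathcal M_n/n_1<1)$ with the exceedance probability of the supremum of the limit Gaussian process $\mathcal F$ over the parameter set. The plan is to derive this as a consequence of the continuous mapping theorem applied to the weak convergence $\mathcal F_n \Rightarrow \mathcal F$ established in Theorem~\ref{FCLTNull}, exactly in the spirit of Theorem~2 of \cite{PawlakSteland}. First I would observe that under condition~(\ref{learningsample}) the null hypothesis holds on the learning interval, so for $n_0\le k\le n_1$ the centering in $\mathcal F_n$ is correct and the detector $\mathcal M_n$ defined in (\ref{globmvb}) is governed by the null-distributed process. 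The key identity to set up is
\[
\left\{\frac{\mathcal M_n}{n_1}<1\right\}
=\left\{\max_{n_0\le k\le n_1}\ \max_{0\le t\le \overline{\tau}_1 k/n_1}\ \max_{r_2\in[0,\overline{\tau}_2],\, r_3\in[0,\overline{\tau}_3]}\left|\mathcal F_n\left(\tfrac{k}{n_1},t,r_2,r_3\right)\right|>c_M\right\},
\]
which follows directly from the definition of $\mathcal M_n$ as the first $k$ at which the inner maximum exceeds $c_M$: this event occurs for some $k\le n_1$ if and only if the maximum over all admissible $k$ exceeds $c_M$.

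Next I would identify the right-hand side with a functional of $\mathcal F_n$. Define $\Psi$ on the relevant Skorohod space by
\[
\Psi(g)\coloneqq\sup_{s_0<s\le 1}\ \sup_{0\le t\le s\overline{\tau}_1}\ \sup_{r_2\in[0,\overline{\tau}_2],\, r_3\in[0,\overline{\tau}_3]}\left|g(s,t,r_2,r_3)\right|.
\]
The discrete maximum over $k\in\{n_0,\dots,n_1\}$ converges to the supremum over $s\in(s_0,1]$ because $n_0/n_1\to s_0$ and the grid $\{k/n_1\}$ becomes dense; the step functions $k\mapsto \overline{\tau}_1 k/n_1$ approximate the diagonal constraint $t\le s\overline{\tau}_1$. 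Thus, up to asymptotically negligible discretization, $\mathbf 1\{\mathcal M_n/n_1<1\}$ is $\mathbf 1\{\Psi(\mathcal F_n)>c_M\}$. Since $\Psi$ is continuous with respect to the sup-type topology on the space where $\mathcal F$ lives (using that $\mathcal F$ has continuous sample paths by Lemma~\ref{covstructure}(b)), the continuous mapping theorem gives $\Psi(\mathcal F_n)\Rightarrow \Psi(\mathcal F)$. Because $\mathcal F$ is a nondegenerate Gaussian process, the distribution of $\Psi(\mathcal F)$ has no atom at $c_M$, so the portmanteau theorem upgrades this to convergence of the probabilities $P_0(\Psi(\mathcal F_n)>c_M)\to P(\Psi(\mathcal F)>c_M)$, which is precisely the claimed right-hand side.

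The main obstacle is handling the interplay between the discrete index $k$ (and the associated step-function upper limit $\overline{\tau}_1 k/n_1$ in the inner maximum over $t$) and the continuous supremum over $(s,t)$ in $\Psi$; one must show the discretization error vanishes, which requires either an equicontinuity / tightness argument for $\mathcal F_n$ along the diagonal or an interpolation of $\mathcal F_n$ in the first coordinate combined with uniform continuity of the limit. A secondary technical point is verifying that $\Psi$ is measurable and continuous at $\mathbb P^{\mathcal F}$-a.e.\ path in the chosen function space — this is where continuity of sample paths of $\mathcal F$ (Lemma~\ref{covstructure}(b)) is essential, and where one must be careful that the sup over the full parameter hyperrectangle, rather than a countable dense subset, is the relevant functional; this can be settled by the separability of $\mathcal F$ afforded by path continuity. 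Once these points are in place, the atomlessness of $\Psi(\mathcal F)$ at the fixed level $c_M$ (a standard fact for suprema of nondegenerate continuous Gaussian fields) closes the argument.
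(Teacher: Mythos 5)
Your proposal is correct and follows essentially the same route as the paper, which states this result as an adaptation of Theorem~2 of Pawlak and Steland and (implicitly, as in its proof of Lemma~\ref{cltdet}) derives it from Theorem~\ref{FCLTNull} together with the continuous mapping theorem, using path continuity of $\mathcal F$ from Lemma~\ref{covstructure}(b) and the absence of an atom of $\sup|\mathcal F|$ at $c_M$. Your additional remarks on the discretization of the index $k$ and the boundary between the events $\{\mathcal M_n/n_1<1\}$ and $\{\mathcal M_n/n_1\le 1\}$ only make explicit what the paper leaves implicit.
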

Because of this theorem we can ensure that the asymptotic false alarm probability is not greater than $\alpha\in (0,1)$, if we choose the control limit $c_M=c_M(\alpha)$ as the smallest $c_M$ such that
\begin{align*}
P\left(\sup_{s_0\leq s\leq 1}\sup_{0\leq t\leq s\overline{\tau}_1}\sup_{r_2\in[0,\overline{\tau}_2],\atop r_3\in[0,\overline{\tau}_3]}\left|\mathcal{F}(s,t,r_2,r_3)\right|>c_M\right)\leq\alpha.
\end{align*}
As it is, however, not easy to obtain a concrete formula for the distribution of  
\[
X=\sup_{s_0\leq s\leq 1}\sup_{0\leq t\leq s\overline{\tau}_1}\sup_{r_2\in[0,\overline{\tau}_2],\atop r_3\in[0,\overline{\tau}_3]}\left|\mathcal{F}(s,t,r_2,r_3)\right| 
\]
we propose the following Monte Carlo algorithm to simulate $X$ and the control limit $c_M$. The algorithm is an adaption of the proposed algorithm in \cite{PawlakSteland}, p.~8, to the multivariable process $\mathcal{F}(s,t,r_2,r_3)$.

\textit{Step 1}: Generate trajectories of the Gaussian process $\mathcal{F}(s,t,r_2,r_3)$ on a grid $\{(s_i,t_j,(r_2)_k,(r_3)_l): i,j,k,l=1,\ldots,N\}$ where $0\leq s_1<\ldots<s_N\leq 1$, $0\leq t_1<\ldots<t_N\leq \overline{\tau}_1$, $0\leq (r_2)_1<\ldots<(r_2)_N\leq \overline{\tau}_2$ and $0\leq(r_3)_1<\ldots<(r_3)_N\leq \overline{\tau}_3$ for some $N\in\mathbb{N}$.

\textit{Step 2}: Return $X$ by calculating the maximum of the values $|\mathcal{F}(s_i,t_j,(r_2)_k,(r_3)_l)|$ for all $(i,j,k,l)$ such that the constraints $s_0\leq s_i\leq 1$ and $0\leq t_j\leq s_i\overline{\tau}_1$ are satisfied.

\textit{Step 3}: Using a large numer of repetitions of Step 1 and Step 2 produce realizations of $X$ to determine the empirical $(1-\alpha)-$quantile as an approximation for $c_M(\alpha)$.

We now begin our investigations with an illustrative example of the detection scheme. For that we assume that our reference signal is given by
\[
f_0(t,r_2,r_3)=\sin(6t)\sin(4r_2)\sin(4r_3)
\]
on $[0,2]^3$. Moreover, we assume that at the point in time $t=1$ a jump occurs over the whole image sequence which leads to an alternative signal of the form
\begin{align*}
f_1(t,r_2,r_3)=\begin{cases}
  f_0(t,r_2,r_3),  & t<1\\
  0.2+f_0(t,r_2,r_3), & t\geq 1
\end{cases}
\end{align*}
with $(t,r_2,r_3)\in[0,2]^3$. 
Thus, we obtain our noisy sample by the model
\[
y_{i_1,i_2,i_3}=f_j(i_1\tau_1,i_2\tau_2,i_3\tau_3)+\varepsilon_{i_1,i_2,i_3},
\]
for $j=0$ (null hypothesis) resp. $j=1$ (alternative),
where $\{\varepsilon_{i_1,i_2,i_3}\}$ is an i.i.d.\ $\mathcal N(0,1)$-distributed random field. In the following we take $\tau_1=0.04$ and $\tau_2=\tau_3=0.05$ corresponding to $n_1=\overline{\tau}_1/\tau_1=50$ observations in the time domain, and to $n_2=\overline{\tau}_2/\tau_2=40$ and $n_3=\overline{\tau}_3/\tau_3=40$ observations in the spatial domain. Moreover, we take the bandwidths as $\Omega_1=\Omega_2=\Omega_3=10$ and choose $s_0=0.05$ leading to $n_0=\left\lfloor s_0n_1\right\rfloor=2$.

We now consider the global maximum norm detector 
\[
\max_{0\leq t\leq \overline{\tau}_1 k/n_1} \max_{r_2\in[0,\overline{\tau}_2], r_3\in[0,\overline{\tau}_3]}\left|\mathcal{F}_n\left(\frac{k}{n_1},t,r_2,r_3\right)\right|
\]
for $k=n_0,\ldots,n_1$. If we take $\alpha=0.05$ and apply the Monte Carlo algorithm from above, we obtain as value for the control limit $c_M=23.3147$ which is the horizontal line in Figure~\ref{IllustrativeEx}. Furthermore, the solid line corresponds to the detection process under the alternative whereas the dashed line corresponds to the detection process under the null hypothesis. We can see that the partial sum process stays below the control limit for the whole observation period $[0,2]$ if there is no change in the signal. If we have, however, a change-point at $t=1$ the detection process directly reacts and crosses the control limit a short while later, namely for $k=30$ corresponding to the point in time $t_0=30\cdot 0.04=1.2$.
\begin{figure}
  \begin{center} 
  \includegraphics[width=120mm]{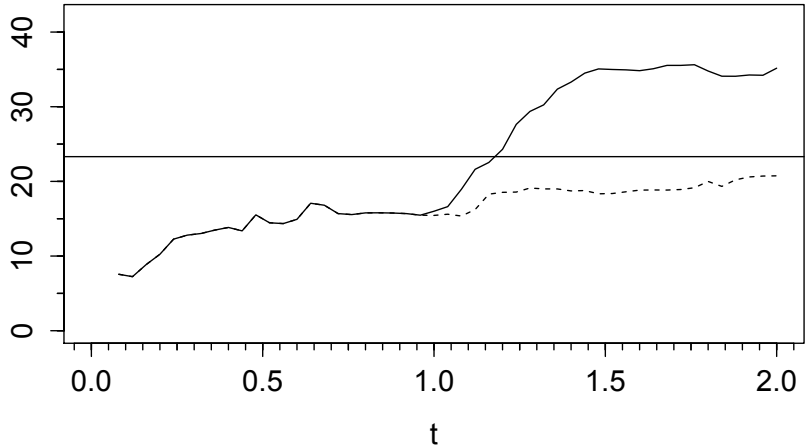}
  \caption{The global maximum norm detector $M_n$ applied to the signal $f_1(t,r_2,r_3)$ with a change-point at $t=1$. The change is detected at $t=1.2$.}
  \label{IllustrativeEx}
  \end{center}
\end{figure}

In the following simulation study we investigate the accuracy of the global maximum norm detector. Moreover, we evaluate the influence of different sampling periods and different correlation structures of the noise process. We also want to find out the influence of the asymptotic variance and its estimator developed in \cite{PrauseSteland2016} on the proper selection of the control limit $c_M$.

\subsection{Influence of the Sampling Periods}
We begin by analyzing the influence of different sampling periods in the spatial and time domain with respect to the rejection rates. For that, we calculate the corresponding control limit $c_M$ with the help of the Monte Carlo algorithm described above. Thus, we evaluate the process $\mathcal{F}(s,t,r_2,r_3)$ on the grid $\{(s_i,t_j,(r_2)_k,(r_3)_l): i,j,k,l=1,\ldots,N\}$ with $N=15$. After calculating the required maxima of $|\mathcal{F}(s_i,t_j,(r_2)_k,(r_3)_l)|$ we use the $95\%$-quantile of 10000 simulation replicates to estimate $c_M$.

In the following we adapt the setting of the illustrative example. As the noise process is modelled by an i.i.d.\ $\mathcal N(0,1)$-distributed random field, we obtain for the asymptotic variance $\sigma^2=1$.

Table~\ref{taberror1kind1000} shows the simulated type one errors for various sampling periods $\tau_1, \tau_2$, and $\tau_3$ for 1000 repetitions. We can see that the simulated rejection rates lie between 0.055 and 0.084 and thus that there is only a small influence of the sampling periods on the accuracy of the detector.
\begin{table}[h!]
\centering
\begin{tabular}{crrrrrrr}
\hline
$\tau_1$                       & $0.025$ & $0.0\overline{3}$ & $0.04$ & $0.05$ & $0.0\overline{6}$ & $0.08$ & $0.1$    \\
\hline
\multicolumn{1}{l|}{$\tau_2=\tau_3=0.0\overline{3}$}& 0.074 & 0.064 & 0.067 & 0.079 & 0.076 & 0.070 & 0.055 \\
\multicolumn{1}{l|}{$\tau_2=\tau_3=0.04$ }          & 0.074 & 0.066 & 0.067 & 0.061 & 0.064 & 0.076 & 0.062 \\
\multicolumn{1}{l|}{$\tau_2=\tau_3=0.05$}           & 0.074 & 0.069 & 0.084 & 0.070 & 0.063 & 0.066 & 0.058 \\
\hline
\end{tabular}
\caption{Simulated rejection rates for various sampling periods $\tau_1, \tau_2$ and $\tau_3$ for 1000 repetitions.}
\label{taberror1kind1000}
\end{table}

If we use 10000 instead of 1000 repetitions, we get even more accurate results. On account of very high computational costs, however, we only examined the rejection rates for 10000 repetitions in four cases.
\begin{table}[h!]
\centering
\begin{tabular}{crrrrrrr}
\hline
$\tau_1$                       & $0.08$ & $0.1$    \\
\hline
\multicolumn{1}{l|}{$\tau_2=\tau_3=0.04$}           & 0.0652 & 0.0598 \\
\multicolumn{1}{l|}{$\tau_2=\tau_3=0.05$}           & 0.0587 & 0.0567 \\
\hline
\end{tabular}
\caption{Simulated rejection rates for various sampling periods $\tau_1, \tau_2$, and $\tau_3$ for 10000 repetitions.}
\label{taberror1kind10000}
\end{table}

\subsection{Influence of Noise Correlations}
In this subsection we investigate how the rejection rates behave when using model (M4) as in \cite{PrauseSteland2016} for the noise process instead of taking i.i.d.\ errors, i.e.\ we put for $\rho\in(-1,1)$
\begin{align*}
(\textnormal{M4})\quad \varepsilon_{t,i,j}=X_{t,i,j}+v_{t,i,j},\qquad
X_{t,i,j}=\rho X_{t-1,i,j}+u_{t,i,j}
\end{align*}
where 
$u_{t,i,j}\stackrel{i.i.d.}\sim\mathcal N(0,1)$ for all $t,i,j\in\mathbb{Z}$ and the $v_{t,i,j}$ follow for each fixed $t$ their model (M1) which means that
\begin{align*} 
(\textnormal{M1})\,\,\, v_{i,j}=v_{t,i,j}=&\coloneqq a_1\eta_{i-1,j-1}+a_2\eta_{i-1,j}+a_3\eta_{i-1,j+1}+a_4\eta_{i,j-1}\\&+a_5\eta_{i,j}+a_6\eta_{i,j+1}+a_7\eta_{i+1,j-1}+a_8\eta_{i+1,j}+a_9\eta_{i+1,j+1}
\end{align*}
for i.i.d.\ innovations $\eta_{i,j}$ with $\eta_{i,j}\sim\mathcal{N}(0,1)$ for all $i$ and $j$ and real weights $a_k,\ k=1,\ldots,9$, where $a_5=1$ and $a_k=a$ for $k\neq 5$. Moreover, we suppose that the $v_{t,i,j}$
are uncorrelated for different values of $t$ and that $u_{t_1,i_1,j_1}$ and $v_{t_2,i_2,j_2}$ are uncorrelated for all $t_1,t_2,i_1,i_2,j_1,j_2\in\mathbb{Z}$.

We now fix $\tau_1=0.04, \tau_2=\tau_3=0.05$, and $u_0=0.25$ leading to $n_0=12$ as size for the learning sample in the time domain. The rest of the setting stays the same as in the illustrative example. We allow the autoregressive parameter $\rho$ to vary over the set $\{0.1,0.2,0.3,0.4,0.5\}$ while the moving average parameter $a$ lies in the set $\{\-0.1,0.01,0.1,0.3,0.5\}$. By Theorem~\ref{FCLTNull} we now obtain the proper control limit via the formula $c_{M,\sigma}=\sigma c_M$ where $\sigma$ is the asymptotic standard deviation of the noise process and $c_M$ the control limit for i.i.d.\ error terms. If the dependence structure of the noise process is unknown, one has to replace $\sigma$ by a proper estimator, see \cite{PrauseSteland2016}, leading to a control limit of the form $c_{M,\widehat{\sigma}}=\widehat{\sigma}c_M$.

Table~\ref{taberror1kindDepTrueSigma} shows the rejection rates for 1000 repetitions for control limits calculated with the true $\sigma$. We see that the rejection rates of the detector are quite accurate over the whole set of considered parameters. Smaller values of $\rho$ and $a$, reflecting a weak dependence structure of the noise process, lead to higher rejection rates, while larger values of these parameters, reflecting a strong dependence of the error terms, lead to lower rejection rates. Moreover, we can see that in most cases the rejection rates decrease for fixed $a$ and growing $\rho$ as well as for fixed $\rho$ and growing $a$, where this decrease is greater for smaller than for larger values of $a$ and $\rho$ respectively.
\begin{table}[h!]
\centering
\begin{tabular}{crrrrrrr}
\hline
$\rho$    & $0.1$ & $0.2$ & $0.3$ & $0.4$ & $0.5$   \\
\hline
\multicolumn{1}{l|}{$a=-0.1$}    & 0.099 & 0.086 & 0.065 & 0.053 & 0.036 \\
\multicolumn{1}{l|}{$a=0.01$}   & 0.063 & 0.063 & 0.059 & 0.056 & 0.039 \\
\multicolumn{1}{l|}{$a=0.1$}   & 0.058 & 0.053 & 0.047 & 0.041 & 0.035 \\
\multicolumn{1}{l|}{$a=0.3$}   & 0.033 & 0.033 & 0.036 & 0.038 & 0.033 \\
\multicolumn{1}{l|}{$a=0.5$}   & 0.031 & 0.028 & 0.027 & 0.029 & 0.028 \\
\hline
\end{tabular}
\caption{Simulated rejection rates for 1000 repetitions for control limits calculated with the true $\sigma$ ($n_1=50, n_0=12$).}
\label{taberror1kindDepTrueSigma}
\end{table}

When the asymptotic variance is unknown we can use the estimators proposed in \cite{PrauseSteland2016} in order to calculate the proper control limits. In this paper the accuracy of the estimators is shown in an extensive simulation study for various models of the underlying random field.

\subsection{Power Study}
We now want to analyse the power of our detection scheme when allowing for alternatives of the form (\ref{local3}). For that we take the local departure models as
\begin{align}\label{simualternative}
f_{n_1,n_2,n_3}(t,r_2,r_3)-f_0(t,r_2,r_3)=\frac{\delta_i(t,r_2,r_3)}{n_1^{\beta_1}n_2^{\beta_2}n_3^{\beta_3}}
\end{align}
with 
\begin{align}
\delta_i(t,r_2,r_3)=g_i\sin\left(15(t-1)+\frac{\pi}{2}\right)\sin\left(4r_2\right)\sin\left(4r_3\right)
\end{align}
for $(t,r_2,r_3)\in[0,2]^3$ and $(g_1,g_2,g_3)=(68,125,559)$. Moreover, we put $\beta_2=\beta_3=0.5$ and choose $\beta_1\in\{0.3,0.5,1\}$. 



Note that the alternative signal $f_{n_1,n_2,n_3}(t,r_2,r_3)$ displays an amplitude, a frequency as well as a phase distortion in the time domain compared to the reference signal $f_0(t,r_2,r_3)=\sin(6t)\sin(4r_2)\sin(4r_3)$. 

By the asymptotic theory of Theorem~\ref{detalt} we expect that changes with $\beta_1>1/2$ cannot be detected for large sample sizes in the time domain, whereas changes with $\beta_1\leq1/2$ can easily be detected. In particular, the case $\beta_1=1/2$ corresponds to the case where the process $\mathcal{F}_n(s,t,r_2,r_3)$ converges to the process $\mathcal{F}^{\delta}(s,t,r_2,r_3)$ defined in Theorem~\ref{detalt}. 

The values of $g_i, i=1,2,3,$ were chosen in such a way that the initial power for $n_1=20$ observations in the time domain is reasonably high and the same for all three different values of $\beta_1$. Here, $g_1=68$ corresponds to $\beta_1=0.3$, $g_2=125$ to $\beta_1=0.5$, and $g_3=559$ to $\beta_1=1$.

The rest of the simulation setting stays the same as in the illustrative example; in particular, we suppose that the error terms are i.i.d.\ and that this is known. The resulting power curves are shown in Figure~\ref{PowerIID}.

\begin{figure}[h!]
\begin{center}
  \includegraphics[width=120mm]{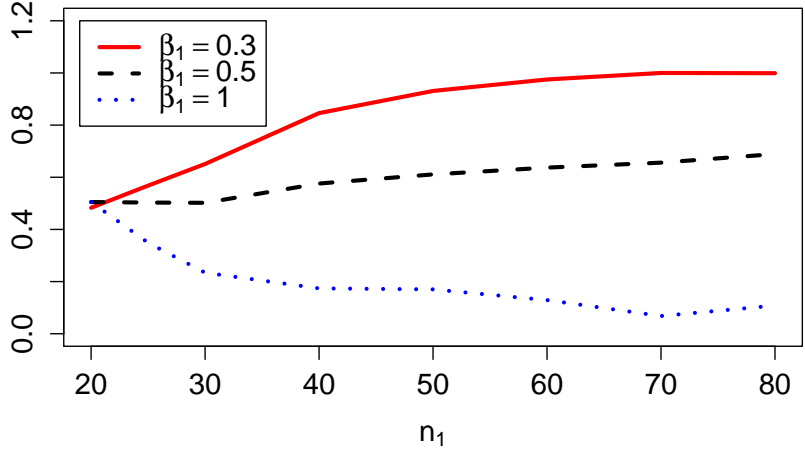}
  \caption{Simulated power for the models in (\ref{simualternative}) for different values of $\beta_1$ as a function of the sample size $n_1$ in the time domain.}
  \label{PowerIID}
\end{center}
\end{figure}

Due to very high computational costs the sample size $n_1$ only varies between $20$ and $80$.
We see that the curves indeed confirm what is predicted by the theory:\ For $\beta_1<1/2$ the power increases towards one and for $\beta_1>1/2$ it decreases towards the type one error rate of $\alpha=5\%$. For $\beta_1=1/2$ one could assume that the power increases as well when looking at Figure~\ref{PowerIID}; Table~\ref{beta105power} shows, however, that for larger sample sizes in the time domain it levels off between 0.7 and 0.8 which thus confirms the theory as well.

\begin{table}[h!]
\centering
\begin{tabular}{crrrrr}
\hline
$n_1$         & $80$ & $160$ & $240$ & $320$ \\
\hline
\multicolumn{1}{l|}{$\beta_1=0.5$} & 0.689 & 0.760 & 0.778 & 0.786 \\
\hline
\end{tabular}
\caption{Simulated power for the models in (\ref{simualternative}) for $\beta_1=0.5$ and different sample sizes $n_1$.}
\label{beta105power}
\end{table}

\appendix

\section{Proofs}

\subsection{Preliminaries}\label{details}
Before proving the main results of this paper we briefly want to review the Skorohod space for functions defined on $[0,1]^q$ as well as the multivariate Riemann-Stieltjes integral.

The functional $\mathcal F_n(s,t,r_2,r_3)$ and its variations that were introduced in the previous sections can be viewed as an element of the Skorohod space $D_q=D[0,1]^q$ for functions defined on $[0,1]^q$ with $q\in\mathbb{N}$. This function space is the generalisation of the well-known Skorohod space $D=D[0,1]$ for functions with a single time parameter to functions with several time parameters and thus allows for certain discontinuities.

Informally speaking, the space $D[0,1]^q$ contains all real-valued functions that are `continuous from above, with limits from below'. To be more precise, let $\bm t\in[0,1]^q$ and define $R_p$, for all $1\leq p\leq q$, as one of the relations `$<$' and `$\geq$'. Denote by $Q_{R_1,\ldots,R_q}(\bm t)$ the quadrant
\[
\{(s_1,\ldots,s_q)\in[0,1]^q: s_pR_pt_p, 1\leq p\leq q\}.
\]
We say that $f$ is an element of $D_q$ if the following two conditions hold:
\begin{itemize}
	\item[(a)] The limit \[f_Q\coloneqq\lim_{\bm s\to \bm t,\atop \bm s\in Q}f(\bm s)\] exists for each of the $2^q$ quadrants $Q=Q_{R_1,\ldots,R_q}(\bm t)$. 
	\item[(b)] We have \[f(\bm t)=f_{Q_{\geq,\ldots,\geq}}=\lim_{\bm s\to \bm t\atop \bm s\in Q_{\geq,\ldots,\geq}} f(\bm s).\]
\end{itemize}
These conditions are analogously to the ones for $D[0,1]$. 
For more details, including the notion of weak convergence in $D[0,1]^q$, we refer to \cite{Straf} and \cite{BickelWichura} respectively.

The generalization of the univariate Riemann-Stieltjes integral to the multivariate Riemann-Stieltjes integral in $q$ dimensions is now done by means of the $q$-fold alternating sum of a function in (\ref{alternierendeSumme}), cf. also \cite{Sard}.

Let $\mathcal Y=\prod_{j=1}^q\mathcal Y_j$ be a ladder on the hyperrectangle $[\bm{a},\bm{b}]\subset\mathbb R^q$, where $\mathcal Y_j$ is a ladder on $\left[a_j,b_j\right]$ for all $j=1,\ldots,q$.  Let $\widetilde{\bm y}\in\left[\bm{y},\bm{y}^+\right]$ for each $\bm y\in\mathcal Y$, where $\bm y^+$ denotes the component-by-component successor of $\bm y$, see above. Define a norm $\|\mathcal Y\|$ on $[\bm{a},\bm{b}]$ as
\[
\left\|\mathcal Y\right\|\coloneqq\max\left\{\max_{y_1\in\mathcal Y_1}\left(y_1^+-y_1\right),\max_{y_2\in\mathcal Y_2}\left(y_2^+-y_2\right),\ldots,\max_{y_q\in\mathcal Y_q}\left(y_q^+-y_q\right)\right\}.
\]
Suppose that $f$ and $h$ are real-valued functions defined on the hyperrectangle $[\bm{a},\bm{b}]$.
Now, for each ladder $\mathcal Y\in\mathbb Y=\prod_{j=1}^q\mathbb Y_j$ consider the so-called Riemann-Stieltjes sum\index{Riemann-Stieltjes sum}
\begin{align}\label{RSSumme}
\Sigma
\coloneqq\sum_{\bm y\in\mathcal Y}h\left(\widetilde{\bm y}\right)\Delta\left(f;\bm y,\bm y_+\right)
=\sum_{\bm y\in\mathcal Y}h\left(\widetilde{\bm y}\right)\left[\sum_{v\subseteq\{1,\ldots,q\}}(-1)^{|v|}f\left(\bm y_v:\bm y^+_{-v}\right)\right].
\end{align}
Analogously to the one-dimensional case we now define the Riemann-Stieltjes integral of $h$ with respect to $f$ as
\begin{align}\label{DefMRSI}
\int_{\bm a}^{\bm b}\!h(\bm{x})\,\mathrm{d}f(\bm{x})\coloneqq\lim_{\left\|\mathcal Y\right\|\to 0}\Sigma,
\end{align}
if the latter exists. The integral on the left is understood as a multivariate integral, namely as
\begin{align*}
\int_{\bm a}^{\bm b}\!h(\bm{x})\,\mathrm{d}f(\bm{x})
&=\int_{\bm{a}_{1:q}}^{\bm{b}_{1:q}}\!h(\bm{x})\,\mathrm{d}f\left(\bm{x}\right)\\&
=\int_{a_1}^{b_1}\int_{a_2}^{b_2}\ldots\int_{a_q}^{b_q}\!h\left(x_1,x_2,\ldots,x_q\right)\,\mathrm{d}f\left(x_1,x_2,\ldots,x_q\right).
\end{align*}
Similar to the one-dimensional case this integral and all `lower dimensional' integrals exist if $h$ is a continuous function on the hyperrectangle $[\bm a,\bm b]$, $\bm a,\bm b\in\mathbb{R}^q$ and if $f$ is a function of bounded variation in the sense of Hardy and Krause on $[\bm a,\bm b]$.

Moreover, there exists a generalization of the integration by parts formula for multivariate Riemann-Stieltjes integrals, see \cite{Young}, p.~287.
This allows us to define the multivariate Riemann-Stieltjes integral even with respect to functions $h$ that are not of bounded variation in the sense of Vitali or in the sense of Hardy and Krause. In this case we take the integration by parts formula as a definition for the integral, i.e.\ we put
\begin{align}\label{DefunbeschrVar}
\int_{\bm{a}_{1:q}}^{\bm{b}_{1:q}}\!f(\bm{x})\,\mathrm{d}h(\bm{x})&
\coloneqq\left[h(\bm{x})f(\bm{x})\right]_{\bm{a}_{1:q}}^{\bm{b}_{1:q}}+(-1)^q\int_{\bm{a}_{1:q}}^{\bm{b}_{1:q}}\!h(\bm{x})\,\mathrm{d}f(\bm{x})\notag\\&
+\sum_{v\subseteq\{1,\ldots,q\},\atop 1\leq |v|\leq q-1}(-1)^{|v|}\int_{\bm{a}_v}^{\bm{b}_v}\!\left[h(\bm{x})\,\mathrm{d}f(\bm{x})\right]_{\bm{a}_{-v}}^{\bm{b}_{-v}}
\end{align}
whenever the right-hand side exists.
The square bracket notation is used for the evaluation, respectively, the increment of a multivariate antiderivative $f$ over a hyperrectangle $[\bm{a},\bm{b}]\subset\mathbb R^q$. Thus, we have
\[
\left[f(\bm{x})\right]_{\bm a}^{\bm b}
=\left[f(x_1,\ldots,x_q)\right]_{\bm{a}_{1:q}}^{\bm{b}_{1:q}}
\coloneqq\Delta(f;\bm a,\bm b)
=\sum_{v\subseteq\{1,\ldots,q\}}(-1)^{|v|}f\left(\bm{a}_v:\bm{b}_{-v}\right).
\]
If the evaluation of $f$ only takes place over a hyperrectangle $\left[\bm{a}_{-v},\bm{b}_{-v}\right],\ v\subset 1:q$, we write $\left[f(\bm{x})\right]_{\bm{a}_{-v}}^{\bm{b}_{-v}}$, and this is defined as
\[
\left[f(\bm{x})\right]_{\bm{a}_{-v}}^{\bm{b}_{-v}}
=\left[f\left(\bm{x}_v:\bm{x}_{-v}\right)\right]_{\bm{a}_{-v}}^{\bm{b}_{-v}}
\coloneqq\sum_{w\subseteq -v}(-1)^{|w|}f\left(\bm{x}_v:\bm a_w:\bm b_{-w}\right).
\]
Thus, we can define the Riemann-Stieltjes integral with respect to a (multivariable) Brownian motion, where the sample paths are of unbounded variation almost surely.

\subsection{Proofs}
This section is devoted to rigorous proofs of the presented theorems. Note that parts of the material are taken from \cite{Prause}.
We start with proving Theorem~\ref{FCLTNull} and introduce the following abbreviating notation.
Let
\begin{align}\label{defphi_w}
\varphi_w(v_1,v_2,v_3,t,r_2,r_3)\coloneqq
\varphi\left(t-\overline{\tau}_1v_1,r_2-\overline{\tau}_2v_2,r_3-\overline{\tau}_3v_3\right)
\end{align}
for $v_1,v_2,v_3\in [0,1]$, $t\in[0,\overline{\tau}_1]$, $r_2\in[0,\overline{\tau}_2]$ and $r_3\in[0,\overline{\tau}_3]$. Here, 
\begin{align*}
&\quad\ \ \varphi\left(t-\overline{\tau}_1v_1,r_2-\overline{\tau}_2v_2,r_3-\overline{\tau}_3v_3\right)\\&
\hspace{2cm}=\frac{\sin(\Omega_1 (t-\overline{\tau}_1v_1))}{\pi (t-\overline{\tau}_1v_1)}\frac{\sin(\Omega_2 (r_2-\overline{\tau}_2v_2))}{\pi (r_2-\overline{\tau}_2v_2)}\frac{\sin(\Omega_3 (r_3-\overline{\tau}_3v_3))}{\pi (r_3-\overline{\tau}_3v_3)}\\&
\hspace{2cm}\eqqcolon\varphi_1(v_1,t)\varphi_2(v_2,r_2)\varphi_3(v_3,r_3)
\end{align*}
with $\varphi_j(v_j,\overline{\tau}_jv_j)=\Omega_j/\pi$ for $j=1,2,3$.

For the proof of Theorem~\ref{FCLTNull} we need the following lemmas.
\begin{lemma}\label{phi_wBVHK}
Let $\varphi_w$ be defined as in (\ref{defphi_w}). Then
\[
\sup_{t\in[0,\overline{\tau}_1],\atop{r_2\in[0,\overline{\tau}_2],r_3\in[0,\overline{\tau}_3]}}
V_{HK}(\varphi_w(\cdot,\cdot,\cdot,t,r_2,r_3))<\infty.
\]
\end{lemma}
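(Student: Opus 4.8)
The function $\varphi_w(v_1,v_2,v_3,t,r_2,r_3) = \varphi_1(v_1,t)\varphi_2(v_2,r_2)\varphi_3(v_3,r_3)$ is, as a function of $(v_1,v_2,v_3)$, a product of three univariate factors, each depending on exactly one of the variables. The plan is to exploit this product structure together with a general fact about the Hardy--Krause variation of product functions: if $f(\bm x) = \prod_{j=1}^{q} g_j(x_j)$ with each $g_j$ of bounded (univariate) variation on $[a_j,b_j]$ and bounded in sup-norm, then $f \in BVHK[\bm a,\bm b]$ with an explicit bound for $V_{HK}(f)$ in terms of the quantities $\|g_j\|_\infty$ and $V(g_j;a_j,b_j)$. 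Indeed, for a product function the Vitali variation over any subrectangle indexed by $u \subseteq 1{:}q$ factorizes: the alternating sum $\Delta$ of a product over a block splits into a product of one-dimensional increments, so $V_{[\bm a_u,\bm b_u]}\big(\prod_j g_j\big) = \prod_{j\in u} V(g_j;a_j,b_j) \cdot \prod_{j\notin u}\lvert g_j(\text{fixed})\rvert$, and summing over all nonempty $u$ in the definition \eqref{VarHardyKrause} gives a finite bound built from the $V(g_j)$ and $\|g_j\|_\infty$.

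\textbf{Key steps.} First I would record the product-variation lemma just described (or cite the corresponding statement in \cite{Owen}). Second, I would fix $(t,r_2,r_3)$ and identify the three univariate factors $v_1 \mapsto \varphi_1(v_1,t)$, $v_2 \mapsto \varphi_2(v_2,r_2)$, $v_3 \mapsto \varphi_3(v_3,r_3)$ on $[0,1]$. Third, for each factor I would show two things uniformly in the suppressed parameter: (i) it is bounded, and (ii) it has finite univariate total variation with a bound independent of $t$ (resp.\ $r_2$, $r_3$). For (i) note $\varphi_j(v_j,\cdot) = \sin(\Omega_j(\cdot-\overline\tau_j v_j))/(\pi(\cdot-\overline\tau_j v_j))$ is the sinc-type kernel, which is bounded by $\Omega_j/\pi$ (the removable-singularity value) since $\lvert\sin x/x\rvert \le 1$. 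For (ii), the map $v_j \mapsto \widetilde\varphi_j(t-\overline\tau_j v_j)$ is a $C^1$ function of $v_j$ on the compact interval $[0,1]$ (the sinc function is real-analytic, in particular $C^1$, everywhere including the origin), so its total variation equals $\int_0^1 \lvert \frac{d}{dv_j}\widetilde\varphi_j(t-\overline\tau_j v_j)\rvert\,dv_j = \overline\tau_j\int_0^1 \lvert\widetilde\varphi_j'(t-\overline\tau_j v_j)\rvert\,dv_j$, which is bounded by $\overline\tau_j \cdot \sup_{\xi\in[-\overline\tau_j,\overline\tau_j]}\lvert\widetilde\varphi_j'(\xi)\rvert < \infty$, a bound not depending on $t\in[0,\overline\tau_j]$. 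Fourth, I would plug these uniform bounds into the product-variation lemma to conclude $V_{HK}(\varphi_w(\cdot,\cdot,\cdot,t,r_2,r_3))$ is bounded by a finite constant depending only on $\Omega_1,\Omega_2,\Omega_3$ and $\overline\tau_1,\overline\tau_2,\overline\tau_3$, hence take the supremum over $(t,r_2,r_3)$.

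\textbf{Main obstacle.} The only genuinely delicate point is the behaviour of the sinc kernel $\widetilde\varphi_j$ near its removable singularity at the origin: one must make sure that $\widetilde\varphi_j$ and $\widetilde\varphi_j'$ are genuinely continuous (indeed smooth) there, so that the $C^1$-to-total-variation estimate is legitimate and the sup of $\lvert\widetilde\varphi_j'\rvert$ over the compact interval $[-\overline\tau_j,\overline\tau_j]$ is finite. This follows from the power-series expansion $\sin x/x = \sum_{k\ge 0}(-1)^k x^{2k}/(2k+1)!$, which is entire; differentiating term-by-term shows $\widetilde\varphi_j'$ is likewise continuous (and bounded) on any compact set. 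Everything else is bookkeeping with the definition of $V_{HK}$ and the factorization of alternating sums of product functions, which I would not spell out in full detail.
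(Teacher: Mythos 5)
Your proof is correct and follows essentially the same route as the paper, which likewise exploits the product structure (via Proposition~11 of Owen on products of BVHK functions) and reduces the claim to the one-dimensional bounded variation of each sinc factor. Your version merely spells out the factorization of the alternating sums and the uniformity in $(t,r_2,r_3)$ in more detail than the paper does.
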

\begin{proof}
By Proposition~11 in \cite{Owen} we know that for $f,g\in\ $BVHK we also have $fg\in\ $BVHK; thus, we can consider each factor of $\varphi_w$ separately and the assertion follows by the one-dimensional theory for functions of bounded variation.
\end{proof}
\begin{lemma}\label{PIMEstimateVHK}
Let $h$ be a real-valued function on $[\bm{a},\bm{b}]$ and let $f$ be a function of bounded variation in the sense of Hardy and Krause on $[\bm{a},\bm{b}]$. Suppose that
\[
\int_{\bm{a}_v}^{\bm{b}_v}\!\left[f(\bm{x})\,\mathrm{d}h(\bm{x})\right]_{\bm{a}_{-v}}^{\bm{b}_{-v}}
\]
exists for all $\emptyset\neq v\subseteq 1:q$.
Then there exists a constant $c\in\mathbb R$ such that
\begin{align}\label{ComibPIMVar}
\left|\int_{\bm a}^{\bm b}\!f(\bm{x})\,\mathrm{d}h(\bm{x})\right|
\leq 2^q\sup_{\bm{x}\in[\bm{a},\bm{b}]}|h(\bm{x})|\left[\sup_{\bm{x}\in[\bm{a},\bm{b}]}|f(\bm{x})|+cV_{HK}(f)\right].
\end{align}
\end{lemma}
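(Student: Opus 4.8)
The plan is to bound $\left|\int_{\bm a}^{\bm b} f(\bm x)\,\mathrm d h(\bm x)\right|$ by rewriting it via the integration-by-parts formula (\ref{DefunbeschrVar}) so that $h$ appears only under evaluations (never as an integrator) and $f$ appears only as the integrator of genuine Riemann-Stieltjes integrals. Concretely, since $f\in BVHK[\bm a,\bm b]$, the integrals $\int_{\bm a_v}^{\bm b_v}[h(\bm x)\,\mathrm d f(\bm x)]_{\bm a_{-v}}^{\bm b_{-v}}$ exist for all $\emptyset\neq v\subseteq 1:q$ (by the existence statement recalled in Appendix \ref{details}, $h$ continuous is not even needed for these — only boundedness together with $f\in BVHK$), and by hypothesis the integrals with the roles of $f$ and $h$ swapped also exist; hence the right-hand side of (\ref{DefunbeschrVar}) is well defined and equals $\int_{\bm a}^{\bm b} f\,\mathrm d h$. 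So I would start from
\begin{align*}
\int_{\bm{a}_{1:q}}^{\bm{b}_{1:q}}\!f(\bm{x})\,\mathrm{d}h(\bm{x})&
=\left[h(\bm{x})f(\bm{x})\right]_{\bm{a}_{1:q}}^{\bm{b}_{1:q}}+(-1)^q\int_{\bm{a}_{1:q}}^{\bm{b}_{1:q}}\!h(\bm{x})\,\mathrm{d}f(\bm{x})\\&\quad
+\sum_{v\subseteq\{1,\ldots,q\},\atop 1\leq |v|\leq q-1}(-1)^{|v|}\int_{\bm{a}_v}^{\bm{b}_v}\!\left[h(\bm{x})\,\mathrm{d}f(\bm{x})\right]_{\bm{a}_{-v}}^{\bm{b}_{-v}}
\end{align*}
and apply the triangle inequality to the $2^q$ summands (one boundary term, one top-dimensional integral, and $2^q-2$ lower-dimensional mixed terms).

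The boundary term $[h f]_{\bm a}^{\bm b}=\Delta(hf;\bm a,\bm b)$ is an alternating sum of $2^q$ evaluations of $hf$, so $|[hf]_{\bm a}^{\bm b}|\leq 2^q\sup|h|\sup|f|$. For the top-dimensional term I would use the standard Riemann-Stieltjes bound $\left|\int_{\bm a}^{\bm b} h\,\mathrm d f\right|\leq \sup_{[\bm a,\bm b]}|h|\cdot V_{[\bm a,\bm b]}(f)\leq \sup|h|\cdot V_{HK}(f)$, the first inequality being the multivariate analogue of $|\int g\,\mathrm d f|\le \sup|g|\,V(f)$ applied to the Riemann-Stieltjes sums (\ref{RSSumme}) and passing to the limit, and the second because $V_{[\bm a,\bm b]}(f)=V_{[\bm a_u,\bm b_u]}f(\bm x_u:\bm b_{-u})$ for $u=1:q$ is exactly one summand in the definition (\ref{VarHardyKrause}) of $V_{HK}(f)$. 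For each mixed term with $1\le|v|\le q-1$, the inner object $[\,\cdot\,]_{\bm a_{-v}}^{\bm b_{-v}}$ is an alternating sum over $w\subseteq -v$ of at most $2^{q-|v|}$ copies of $\int_{\bm a_v}^{\bm b_v} h(\bm x)\,\mathrm d f(\bm x_v:\bm a_w:\bm b_{-w})$ (integrating in the $v$-coordinates with the $-v$ coordinates frozen); each such $|v|$-dimensional integral is bounded by $\sup|h|\cdot V_{[\bm a_v,\bm b_v]}f(\bm x_v:\bm z_{-v})$, and by Young's characterization of $BVHK$ recalled in the excerpt, $\sup_{\bm z_{-v}} V_{[\bm a_v,\bm b_v]}f(\bm x_v:\bm z_{-v})<\infty$; summing these finitely many finite quantities over all $v$ produces a finite constant times $\sup|h|$, which I would absorb into $c\,V_{HK}(f)$ (or, if one prefers, into a single constant $c$ depending on $q$ and the BVHK-data of $f$). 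Collecting the three kinds of bounds gives (\ref{ComibPIMVar}) with the factor $2^q$ coming from the boundary term; the remaining terms, being of order $\sup|h|\cdot V_{HK}(f)$ up to a constant, are subsumed in the bracketed expression after possibly enlarging $c$.

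The main obstacle is bookkeeping rather than depth: one must verify that every integral appearing on the right-hand side of (\ref{DefunbeschrVar}) — including all the lower-dimensional ones obtained after expanding the frozen-boundary brackets — actually exists and is finite, which is precisely where the hypotheses ``$f\in BVHK$'' (giving existence of $\int h\,\mathrm df$ by continuity-free Riemann-Stieltjes theory plus Young's finiteness of the sectional Vitali variations) and ``$\int_{\bm a_v}^{\bm b_v}[f\,\mathrm dh]_{\bm a_{-v}}^{\bm b_{-v}}$ exists for all $v$'' (needed so that (\ref{DefunbeschrVar}) may be invoked as the definition of the left-hand integral) enter. Care must also be taken that $V_{[\bm a_v,\bm b_v]}f(\bm x_v:\bm z_{-v})$ is bounded uniformly in $\bm z_{-v}$; this is exactly Young's/Hardy's condition and is the only non-elementary input. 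Once existence is secured, the estimate itself is just the triangle inequality applied termwise together with the elementary bound $|\int h\,\mathrm d f|\le\sup|h|\cdot V(f)$ in each relevant dimension, and the constant $c$ can be taken to depend only on $q$ and on $\sup_{0<|v|<q}\sup_{\bm z_{-v}}V_{[\bm a_v,\bm b_v]}f(\bm x_v:\bm z_{-v})/V_{HK}(f)$.
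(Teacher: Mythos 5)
Your proposal is correct and takes exactly the route the paper intends: the paper's own proof is a single sentence pointing to the integration-by-parts formula (\ref{DefunbeschrVar}), the Riemann--Stieltjes sum definition, and the Hardy--Krause variation, and your write-up is precisely the detailed version of that argument (boundary term bounded by $2^q\sup|h|\sup|f|$, the top-dimensional and sectional integrals bounded by $\sup|h|$ times Vitali variations controlled via Young's characterization, constants absorbed into $c$). The only minor overreach is your parenthetical claim that continuity of $h$ is not needed for the existence of $\int h\,\mathrm{d}f$ when $f\in BVHK$ --- the paper's appendix only asserts existence for continuous integrands, and in general a merely bounded $h$ sharing discontinuities with $f$ can break existence --- but this is a side remark that does not affect the estimate itself.
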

\begin{proof}
The assertion follows by an application of the integration by parts formula combined with the definition of the multivariate Riemann-Stieltjes integral and the variation in the sense of Hardy and Krause.
\end{proof}
\begin{lemma}\label{lemmaSI}
Let $\bm a,\bm b\in\mathbb R^q$, $\widetilde{\bm a},\widetilde{\bm b}\in\mathbb R^p$ with $\bm a\leq \bm b,\widetilde{\bm a}\leq\widetilde{\bm b}$, $p,q\in\mathbb N$.
Let $\psi$ be a bounded function on $[\bm a,\bm b]\times[\widetilde{\bm a},\widetilde{\bm b}]$ with
\[
\sup_{\bm y\in[\widetilde{\bm a},\widetilde{\bm b}]}V_{HK}\left(\psi(\bm{\cdot},\bm y)\right)<\infty.
\]
Let $\{f_n\}$ be a sequence of functions such that $f_n\in D[\bm a,\bm b]$ for all $n\in\mathbb N$. Suppose that $\lim_{n\to\infty}f_n=f$ in the Skorohod topology for a function $f\in C[\bm a,\bm b]$. Moreover, suppose that
\[
\int_{\bm a_v}^{\bm s_v}\!\left[\psi(\bm x,\bm y)\,\mathrm{d}f_n(\bm x)\right]_{\bm a_{-v}}^{\bm s_{-v}}
\]
exists for $\bm s_v\in[\bm a_v,\bm b_v]$, $\bm y\in[\widetilde{\bm a},\widetilde{\bm b}]$ and for all $n\in\mathbb N$ and $\emptyset\neq v\subseteq 1:q$. Then,
\[
\lim_{n\to\infty}\sup_{\bm s\in[\bm a,\bm b]}\sup_{\bm y\in[\widetilde{\bm a},\widetilde{\bm b}]}\left|\int_{\bm a}^{\bm s}\!\psi(\bm x,\bm y)\,\mathrm{d}f_n(\bm x)-\int_{\bm a}^{\bm s}\!\psi(\bm x,\bm y)\,\mathrm{d}f(\bm x)\right|=0.
\]
\end{lemma}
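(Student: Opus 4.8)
The plan is to reduce the uniform convergence of the integrals to the uniform convergence $f_n \to f$ via the integration-by-parts formula (\ref{DefunbeschrVar}), exploiting the fact that the \emph{integrand} $\psi$ (not the integrator) is the object of bounded variation in the sense of Hardy and Krause. First I would apply (\ref{DefunbeschrVar}) to rewrite each integral $\int_{\bm a}^{\bm s}\psi(\bm x,\bm y)\,\mathrm{d}g(\bm x)$, for $g\in\{f_n,f\}$, as a sum consisting of: a boundary evaluation term $[\psi(\bm x,\bm y)g(\bm x)]_{\bm a}^{\bm s}$; the ``full'' integral $(-1)^q\int_{\bm a}^{\bm s}g(\bm x)\,\mathrm{d}\psi(\bm x,\bm y)$; and the lower-dimensional mixed terms $\sum_{1\le|v|\le q-1}(-1)^{|v|}\int_{\bm a_v}^{\bm s_v}[g(\bm x)\,\mathrm{d}\psi(\bm x,\bm y)]_{\bm a_{-v}}^{\bm s_{-v}}$. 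The point of this move is that in every resulting term $g=f_n$ or $g=f$ now appears only through pointwise evaluations or as the integrand against $\mathrm{d}\psi$, and $\psi(\cdot,\bm y)$ has $V_{HK}$ bounded uniformly in $\bm y$ by hypothesis, so Lemma \ref{PIMEstimateVHK} is available.

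Next I would estimate the difference termwise. Since $f\in C[\bm a,\bm b]$ and $f_n\to f$ in the Skorohod topology, convergence in that topology to a continuous limit is equivalent to uniform convergence, so $\varepsilon_n\coloneqq\sup_{\bm x\in[\bm a,\bm b]}|f_n(\bm x)-f(\bm x)|\to 0$; I would invoke this at the outset. The boundary evaluation terms differ by at most $2^q\sup_{\bm x,\bm y}|\psi(\bm x,\bm y)|\cdot\varepsilon_n$, which is $o(1)$ uniformly in $\bm s$ and $\bm y$. For the genuine integral term $\int_{\bm a}^{\bm s}(f_n-f)(\bm x)\,\mathrm{d}\psi(\bm x,\bm y)$ (and, mutatis mutandis, for each lower-dimensional mixed term over $[\bm a_v,\bm s_v]$ with the remaining coordinates frozen and differenced), I would apply the bound (\ref{ComibPIMVar}) of Lemma \ref{PIMEstimateVHK} with the roles of integrand and integrator as: integrand $h=f_n-f$, integrator $f=\psi(\cdot,\bm y)$ restricted to the relevant subrectangle. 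That yields a bound of the form
\[
\Bigl|\int_{\bm a_v}^{\bm s_v}\!\bigl[(f_n-f)(\bm x)\,\mathrm{d}\psi(\bm x,\bm y)\bigr]_{\bm a_{-v}}^{\bm s_{-v}}\Bigr|
\le 2^{q}\,\varepsilon_n\Bigl[\sup_{\bm x\in[\bm a,\bm b]}|\psi(\bm x,\bm y)| + c\,V_{HK}\bigl(\psi(\cdot,\bm y)\bigr)\Bigr],
\]
and since $\sup_{\bm y}V_{HK}(\psi(\cdot,\bm y))<\infty$ and $\psi$ is bounded, the right-hand side is $C\varepsilon_n$ with $C$ independent of $\bm s$ and $\bm y$. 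Summing the $2^q$ (finitely many) terms and taking $\sup_{\bm s,\bm y}$ then gives the claim after letting $n\to\infty$.

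Two technical points deserve care and I expect the second to be the main obstacle. First, one must check that all the integrals produced by (\ref{DefunbeschrVar}) — in particular those against $\mathrm{d}f_n$ that appear in its right-hand side when we read it ``in reverse'' — actually exist, so that the integration-by-parts identity is legitimately applicable; but this is exactly the hypothesis that $\int_{\bm a_v}^{\bm s_v}[\psi(\bm x,\bm y)\,\mathrm{d}f_n(\bm x)]_{\bm a_{-v}}^{\bm s_{-v}}$ exists for every $\emptyset\neq v\subseteq 1{:}q$, combined with the fact that $\psi(\cdot,\bm y)\in\,$BVHK makes the ``dual'' integrals $\int g\,\mathrm{d}\psi$ exist for continuous (hence for $D$) integrands $g$. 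The genuinely delicate part is handling the \emph{endpoint} $\bm s$ uniformly: the subrectangle of integration itself varies with $\bm s$, and $f_n$ may be discontinuous there. The clean way around this is precisely the integration-by-parts reduction above — after it, $f_n$ and $f$ enter only as integrands or as pointwise values, the integrator $\psi$ being fixed, continuous in its first argument and of uniformly bounded Hardy–Krause variation, so the estimates are genuinely uniform in $\bm s$ and $\bm y$ with no appeal to any modulus of continuity of $f_n$. I would remark that this is the multivariate analogue of the standard one-dimensional fact that $\int h\,\mathrm{d}f_n\to\int h\,\mathrm{d}f$ uniformly when $h\in\,$BV and $f_n\to f$ uniformly, the Hardy–Krause variation and the alternating-sum/increment calculus being exactly what is needed to make the several-parameter bookkeeping go through.
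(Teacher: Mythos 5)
Your proposal is correct and follows essentially the same route as the paper: the paper's proof is precisely the one-line combination of Lemma \ref{PIMEstimateVHK}, applied to $\int_{\bm a}^{\bm s}\psi(\bm x,\bm y)\,\mathrm{d}(f_n-f)(\bm x)$, with the observation that Skorohod convergence to a continuous limit implies uniform convergence. Your explicit integration-by-parts decomposition merely unfolds the proof of Lemma \ref{PIMEstimateVHK} inline, and the uniform-in-$(\bm s,\bm y)$ bound $C\varepsilon_n$ you obtain is exactly what that lemma delivers.
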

\begin{proof}
An application of Lemma \ref{PIMEstimateVHK} and the fact that Skorohod convergence towards a continuous limit implies uniform convergence show the assertion.
\end{proof}
We now prove Theorem \ref{FCLTNull}. To simplify the notation we put $n=n_1=n_2=n_3$, but the proof can be completed along the same lines if the $n_i, i=1,2,3,$ differ.
\begin{proof}[Proof of Theorem \ref{FCLTNull}]
We consider the function space $D[0,1]^3$ and define the functional
\[
\Lambda(f)=\Lambda(f)(s,t,r_2,r_3)\coloneqq\sqrt{\overline{\tau}_1\overline{\tau}_2\overline{\tau}_3}
\int_0^s\int_0^1\int_0^1\!\varphi_w(v_1,v_2,v_3,t,r_2,r_3)\,\mathrm{d}f(v_1,v_2,v_3)
\]
for $0<s_0\leq s\leq 1$, $t\in[0,\overline{\tau}_1]$, $r_2\in[0,\overline{\tau}_2]$, and $r_3\in[0,\overline{\tau}_3]$
on it, whenever the integral exists. Here $\varphi_w$ is defined as in (\ref{defphi_w}).
Let $\{f_n\}$ be a sequence of functions such that $f_n\in D[0,1]^3$ for all $n\in\mathbb N$.
We know by Lemma~\ref{lemmaSI}, that $\lim_{n\to\infty}f_n=f$ in the Skorohod topology with $f\in C[0,1]^3$ implies
\begin{align}\label{convfctl}
&\sup_{s\in[s_0,1]}\sup_{t\in[0,\overline{\tau}_1],\atop r_2\in[0,\overline{\tau}_2],r_3\in[0,\overline{\tau}_3]}\left|\int_0^s\int_0^1\int_0^1 \!\varphi_w(v_1,v_2,v_3,t,r_2,r_3)\,\mathrm{d}f_n(v_1,v_2,v_3)\right.\notag\\&\left.\hspace{3.5cm}
-\int_0^s\int_0^1\int_0^1\!\varphi_w(v_1,v_2,v_3,t,r_2,r_3)\,\mathrm{d}f(v_1,v_2,v_3)\right|\to 0
\end{align}
in $D([s_0,1]\times[0,\overline{\tau}_1]\times[0,\overline{\tau}_2]\times[0,\overline{\tau}_3])$ as $n\to\infty$, since 
\[
\sup_{t\in[0,\overline{\tau}_1],\atop{r_2\in[0,\overline{\tau}_2],r_3\in[0,\overline{\tau}_3]}}
V_{HK}(\varphi_w(\cdot,\cdot,\cdot,t,r_2,r_3))<\infty
\]
by Lemma~\ref{phi_wBVHK}.
Assumption~1 and the continuous mapping theorem now lead to
\begin{align*}
\Lambda(Z_n(\cdot))&=\sqrt{\overline{\tau}_1\overline{\tau}_2\overline{\tau}_3}
\int_0^s\int_0^1\int_0^1\!\varphi_w(v_1,v_2,v_3,t,r_2,r_3)\,\mathrm{d}Z_n(v_1,v_2,v_3)\\&
\Rightarrow\sqrt{\overline{\tau}_1\overline{\tau}_2\overline{\tau}_3}\,\sigma
\int_0^s\int_0^1\int_0^1\!\varphi_w(v_1,v_2,v_3,t,r_2,r_3)\,\mathrm{d}B(v_1,v_2,v_3),
\end{align*}
as $n\to\infty$, since $P(B(v_1,v_2,v_3)\in C\left([0,1]^3\right))=1$.

On the other hand we have
\begin{align}\label{form1}
\Lambda(Z_n(\cdot))=\sqrt{\frac{\overline{\tau}_1\overline{\tau}_2\overline{\tau}_3}{n^3}}
\int_0^s\int_0^1\int_0^1\!\varphi_w(v_1,v_2,v_3,t,r_2,r_3)\,\mathrm{d}\left(\sum_{i_1=1}^{\lfloor nv_1\rfloor}\sum_{i_2=1}^{\lfloor nv_2\rfloor}
\sum_{i_3=1}^{\lfloor nv_3\rfloor}\varepsilon_{i_1,i_2,i_3}\right).
\end{align}
We now interpret this last integral as a multivariate Riemann-Stieltjes integral. Let $\mathcal{Y}_1$ be a ladder on $[0,s]$ for $s\in[s_0,1]$ and let $\mathcal{Y}_2$ and $\mathcal{Y}_3$ be ladders on $[0,1]$. Then we can write the triple integral as
\begin{align*}
&\quad\int_0^s\int_0^1\int_0^1\!\varphi_w(v_1,v_2,v_3,t,r_2,r_3)\,\mathrm{d}\left(\sum_{i_1=1}^{\lfloor nv_1\rfloor}\sum_{i_2=1}^{\lfloor nv_2\rfloor}
\sum_{i_3=1}^{\lfloor nv_3\rfloor}\varepsilon_{i_1,i_2,i_3}\right)\\&
=
\lim_{\left\|\mathcal Y\right\|\to 0}\sum_{\bm y\in\mathcal Y}
\varphi_w(\widetilde{y}_1,\widetilde{y}_2,\widetilde{y}_3,t,r_2,r_3)\\&\hspace{2.2cm}
\left[
\sum_{i_1=1}^{\lfloor ny^+_1\rfloor}\sum_{i_2=1}^{\lfloor ny^+_2\rfloor}\sum_{i_3=1}^{\lfloor ny^+_3\rfloor}\varepsilon_{i_1,i_2,i_3}
-\sum_{i_1=1}^{\lfloor ny_1\rfloor}\sum_{i_2=1}^{\lfloor ny^+_2\rfloor}\sum_{i_3=1}^{\lfloor ny^+_3\rfloor}\varepsilon_{i_1,i_2,i_3}\right.\\&\left.\hspace{2.2cm}
-\sum_{i_1=1}^{\lfloor ny^+_1\rfloor}\sum_{i_2=1}^{\lfloor ny_2\rfloor}\sum_{i_3=1}^{\lfloor ny^+_3\rfloor}\varepsilon_{i_1,i_2,i_3}
-\sum_{i_1=1}^{\lfloor ny^+_1\rfloor}\sum_{i_2=1}^{\lfloor ny^+_2\rfloor}\sum_{i_3=1}^{\lfloor ny_3\rfloor}\varepsilon_{i_1,i_2,i_3}\right.\\&\left.\hspace{2.2cm}
+\sum_{i_1=1}^{\lfloor ny_1\rfloor}\sum_{i_2=1}^{\lfloor ny_2\rfloor}\sum_{i_3=1}^{\lfloor ny^+_3\rfloor}\varepsilon_{i_1,i_2,i_3}
+\sum_{i_1=1}^{\lfloor ny_1\rfloor}\sum_{i_2=1}^{\lfloor ny^+_2\rfloor}\sum_{i_3=1}^{\lfloor ny_3\rfloor}\varepsilon_{i_1,i_2,i_3}\right.\\&\left.\hspace{2.2cm}
+\sum_{i_1=1}^{\lfloor ny^+_1\rfloor}\sum_{i_2=1}^{\lfloor ny_2\rfloor}\sum_{i_3=1}^{\lfloor ny_3\rfloor}\varepsilon_{i_1,i_2,i_3}
-\sum_{i_1=1}^{\lfloor ny_1\rfloor}\sum_{i_2=1}^{\lfloor ny_2\rfloor}\sum_{i_3=1}^{\lfloor ny_3\rfloor}\varepsilon_{i_1,i_2,i_3}
\right],
\end{align*}
where $\bm y_+=\left(y^+_1,y^+_2,y^+_3\right)$ is the component-by-component successor of the point $\bm y=\left(y_1,y_2,y_3\right)$, $\widetilde{\bm y}=\left(\widetilde{y}^1,\widetilde{y}^2,\widetilde{y}^3\right)$ is an arbitrary point in the cube $[\bm y,\bm y_+]$ and
\[
\left\|\mathcal Y\right\|=\max\left\{\max_{y_1\in\mathcal{Y}_1}(y^+_1-y_1), \max_{y_2\in\mathcal{Y}_2}(y^+_2-y_2), \max_{y_3\in\mathcal{Y}_3}(y^+_3-y_3)\right\}.
\]
Consider now, without loss of generality, a ladder $\mathcal Y=\prod_{i=1}^3\mathcal Y_i$ with $y_i<k_i/n\leq y^+_i$ for all $k_1\in\{1,\ldots,\left\lfloor ns\right\rfloor\}$ and all $k_2,k_3\in\{1,\ldots,n\}$, respectively, and write $\widetilde{y}_{k_i}$ for points $\widetilde{y}_{k_i}\in(y_i,y_i^+], i=1,2,3$.
As the floor function $\lfloor ny\rfloor$ is constant on intervals of the form $\left[(k-1)/n,k/n\right)$, $k\in\{1,\ldots,n\}$, the triple sums in the last expression can be combined and we obtain
\begin{align*}
&\quad\sum_{i_1=1}^{\lfloor ny^+_1\rfloor}\sum_{i_2=1}^{\lfloor ny^+_2\rfloor}\varepsilon_{i_1,i_2,k_3}
-\sum_{i_1=1}^{\lfloor ny^+_1\rfloor}\sum_{i_2=1}^{\lfloor ny_2\rfloor}\varepsilon_{i_1,i_2,k_3}
-\sum_{i_1=1}^{\lfloor ny_1\rfloor}\sum_{i_2=1}^{\lfloor ny^+_2\rfloor}\varepsilon_{i_1,i_2,k_3}
+\sum_{i_1=1}^{\lfloor ny_1\rfloor}\sum_{i_2=1}^{\lfloor ny_2\rfloor}\varepsilon_{i_1,i_2,k_3}\\&
=
\sum_{i_1=1}^{\lfloor ny^+_1\rfloor}\varepsilon_{i_1,k_2,k_3}
-\sum_{i_1=1}^{\lfloor ny_1\rfloor}\varepsilon_{i_1,k_2,k_3}
=
\varepsilon_{k_1,k_2,k_3}.
\end{align*}
This leads to
\begin{align*}
&\quad
\int_0^s\int_0^1\int_0^1\!\varphi_w(v_1,v_2,v_3,t,r_2,r_3)\,\mathrm{d}\left(\sum_{i_1=1}^{\lfloor nv_1\rfloor}\sum_{i_2=1}^{\lfloor nv_2\rfloor}
\sum_{i_3=1}^{\lfloor nv_3\rfloor}\varepsilon_{i_1,i_2,i_3}\right)\\&
=
\lim_{\left\|P\right\|\to 0}\sum_{k_1=1}^{\lfloor ns\rfloor}\sum_{k_2=1}^{n}\sum_{k_3=1}^{n}
\varphi_w(\widetilde{y}_{k_1},\widetilde{y}_{k_2},\widetilde{y}_{k_3},t,r_2,r_3)\varepsilon_{k_1,k_2,k_3}\\&
=
\sum_{k_1=1}^{\lfloor ns\rfloor}\sum_{k_2=1}^{n}\sum_{k_3=1}^{n}
\varphi_w\left(\frac{k_1}{n},\frac{k_2}{n},\frac{k_3}{n},t,r_2,r_3\right)\varepsilon_{k_1,k_2,k_3},
\end{align*}
where the last equality holds
as
\begin{align*}
\left\|\left(\widetilde{y}_{k_1},\widetilde{y}_{k_2},\widetilde{y}_{k_3}\right)^{\prime}
-\left(\frac{k_1}{n},\frac{k_2}{n},\frac{k_3}{n}\right)^{\prime}\right\|
&\leq
\left\|\left(y^+_1-y_1,y^+_2-y_2,y^+_3-y_3\right)^{\prime}\right\|\\&
\leq \left\|\mathcal Y\right\|\to 0.
\end{align*}
Now we can write (\ref{form1}) as
\begin{align*}
\Lambda(Z_n(\cdot))&=\sqrt{\frac{\overline{\tau}_1\overline{\tau}_2\overline{\tau}_3}{n^3}}
\sum_{k_1=1}^{\lfloor ns\rfloor}\sum_{k_2=1}^{n}\sum_{k_3=1}^{n}
\varphi_w\left(\frac{k_1}{n},\frac{k_2}{n},\frac{k_3}{n},t,r_2,r_3\right)\varepsilon_{k_1,k_2,k_3}.
\end{align*}
If we recall the definition of $\varphi_w$ from (\ref{defphi_w}) and the fact that $\tau_j\approx\overline{\tau}_j/n$, $j=1,2,3$, 
we obtain that
\[
\mathcal{F}_n(s,t,r_2,r_3)=\Lambda(Z_n(\cdot))+o_P(1).
\]
This completes the proof.
\end{proof}
\begin{proof}[Proof of Lemma \ref{covstructure}]
The proof of this lemma is omitted here as it is analogously to the proof of Corollary~1 in \cite{PawlakSteland}, since the properties in Theorem~5.1.4 in \cite{AshGardner} also hold true for multivariate Riemann-Stieltjes integrals.
\end{proof}
\begin{proof}[Proof of Lemma \ref{cltdet}]
The assertion directly follows with Theorem \ref{FCLTNull} and the continuous mapping theorem.
\end{proof}
Before we prove Theorem \ref{detalt} we review the Hwlaka-Koksma inequality which gives an error bound for the discrete approximation of a Riemann integral for functions of bounded variation in the sense of Hardy and Krause, see for example \cite{Niederreiter}, Theorem~2.11. For this we need the notion of discrepancy which measures the deviation of an arbitrary point set in the unit cube from a uniformly distributed point set. Hence, let $P$ be a point set consisting of $\bm x^{(1)},\ldots,\bm x^{(N)}\in[0,1]^q$. Now, for an arbitrary subset $B$ of $[0,1]^q$, we define
\[
A(B;P)\coloneqq\sum_{i=1}^N\bm{1}_B\left(\bm x^{(i)}\right),
\]
i.e.\ $A(B;P)$ counts the number of points of $\bm x^{(1)},\ldots,\bm x^{(N)}$ lying in $B$. We then define the general discrepancy of the point set $P$ for a nonempty family $\mathcal B$ of Lebesgue-measurable subsets of $[0,1]^q$ as
\[
D_N(B;P)\coloneqq\sup_{B\in\mathcal B}\left|\frac{A(B;P)}{N}-\lambda_q(B)\right|.
\]
This leads to the following notion of discrepancy, cf.\ Definition~2.1. in \cite{Niederreiter}.
  \begin{definition}
The \textit{star discrepancy} $D_N^{\star}(P)=D_N^{\star}\left(\bm x^{(1)},\ldots,\bm x^{(N)}\right)$ of the point set $P$ is defined by $D_N^{\star}(P)\coloneqq D_N(\mathcal J^{\star}; P)$, where $\mathcal J^{\star}$ is the family of all subintervals of $[0,1]^q$ of the form $\prod_{i=1}^q[0,u_i)$.
  \end{definition}
We can now state the well-known Hwlaka-Koksma inequality for multivariable functions.
  \begin{lemma}\label{HKinequality}
If $f$ has bounded variation $V_{HK}(f)$ on $[0,1]^q$ in the sense of Hardy and Krause, then, for any $\bm x^{(1)},\ldots,\bm x^{(N)}\in[0,1]^q$, we have
\[
\left|\frac{1}{N}\sum_{i=1}^Nf\left(\bm x^{(i)}\right)-\int_{[0,1]^q}\!f(\bm u)\,\mathrm{d}\bm u\right|\leq V_{HK}(f)D_N^{\star}\left(\bm x^{(1)},\ldots,\bm x^{(N)}\right).
\]
  \end{lemma}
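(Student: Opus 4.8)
The plan is to adapt the classical argument for the Koksma--Hlawka inequality to the multivariate Riemann--Stieltjes calculus developed above, relying on the integration-by-parts identity (\ref{DefunbeschrVar}) and on the definition (\ref{VarHardyKrause}) of $V_{HK}$. Write $P=\{\bm x^{(1)},\dots,\bm x^{(N)}\}$, set $\bm 0=(0,\dots,0)$ and $\bm 1=(1,\dots,1)$, and introduce the empirical counting function $c_N(\bm u)\coloneqq A\big(\prod_{j=1}^{q}[0,u_j);P\big)/N$ on $[0,1]^q$ together with the local discrepancy $\Delta_N(\bm u)\coloneqq c_N(\bm u)-u_1\cdots u_q$. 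Two elementary facts are used repeatedly: first, $\Delta_N$ vanishes whenever at least one coordinate of $\bm u$ equals $0$, and (after the routine, harmless reduction to point sets contained in the open cube $(0,1)^q$) also at $\bm u=\bm 1$; second, by the very definition of the star discrepancy, $|\Delta_N(\bm u)|\le D_N^{\star}(P)$ for every $\bm u\in[0,1]^q$. Since $\mathrm dc_N$ is the measure placing mass $1/N$ at each $\bm x^{(i)}$ while $\mathrm d(u_1\cdots u_q)$ is Lebesgue measure, and (by a standard limiting argument) we may assume $f$ continuous at each $\bm x^{(i)}$ and on the relevant faces, we obtain
\[
\frac1N\sum_{i=1}^{N}f(\bm x^{(i)})-\int_{[0,1]^q}\!f(\bm u)\,\mathrm d\bm u=\int_{[0,1]^q}\!f(\bm u)\,\mathrm d\Delta_N(\bm u),
\]
the Riemann--Stieltjes integral on the right being well defined since $f\in BVHK$ and $\Delta_N$ is of bounded variation in the sense of Hardy and Krause.

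Next I would expand the right-hand integral with the integration-by-parts identity (\ref{DefunbeschrVar}) applied with $h=\Delta_N$, $\bm a=\bm 0$, $\bm b=\bm 1$. In the boundary term $[\Delta_N f]_{\bm 0}^{\bm 1}$ every summand carries a factor $\Delta_N$ evaluated either at a corner with a zero coordinate or at $\bm 1$, so this term vanishes; likewise, in each bracket $[\Delta_N(\bm x)\,\mathrm df(\bm x)]_{\bm 0_{-v}}^{\bm 1_{-v}}$ occurring in the sum, only the summand in which all coordinates indexed by $-v$ equal $1$ survives. Collecting the surviving pieces — with the full-dimensional term $(-1)^q\int_{[0,1]^q}\Delta_N\,\mathrm df$ now playing the role of the index $v=1:q$ — yields the Zaremba-type identity
\[
\int_{[0,1]^q}\!f\,\mathrm d\Delta_N=\sum_{\emptyset\ne v\subseteq 1:q}(-1)^{|v|}\int_{[\bm 0_v,\bm 1_v]}\!\Delta_N\!\left(\bm x_v:\bm 1_{-v}\right)\,\mathrm d f\!\left(\bm x_v:\bm 1_{-v}\right),
\]
where each inner integral is a $|v|$-dimensional Riemann--Stieltjes integral in the variables indexed by $v$, with the remaining variables frozen at $1$.

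It then remains to estimate each summand. The Riemann--Stieltjes sums defining $\int_{[\bm 0_v,\bm 1_v]}\Delta_N(\bm x_v:\bm 1_{-v})\,\mathrm df(\bm x_v:\bm 1_{-v})$ have absolute value at most $\sup_{\bm x_v}|\Delta_N(\bm x_v:\bm 1_{-v})|$ times $\sum_{\bm y}\big|\Delta\big(f(\,\cdot:\bm 1_{-v});\bm y,\bm y^+\big)\big|$, and passing to the limit over ladders bounds the integral by $\sup_{\bm x_v}|\Delta_N(\bm x_v:\bm 1_{-v})|\cdot V_{[\bm 0_v,\bm 1_v]}\big(f(\bm x_v:\bm 1_{-v})\big)$. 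Because $\bm x_v:\bm 1_{-v}\in[0,1]^q$, the first factor is at most $D_N^{\star}(P)$ by the second elementary fact above, so summing over $\emptyset\ne v\subseteq 1:q$ and invoking (\ref{VarHardyKrause}) with $\bm a=\bm 0$, $\bm b=\bm 1$ gives
\[
\left|\frac1N\sum_{i=1}^{N}f(\bm x^{(i)})-\int_{[0,1]^q}\!f\,\mathrm d\bm u\right|\le D_N^{\star}(P)\sum_{\emptyset\ne v\subseteq 1:q}V_{[\bm 0_v,\bm 1_v]}\!\left(f(\bm x_v:\bm 1_{-v})\right)=D_N^{\star}(P)\,V_{HK}(f),
\]
which is the claim. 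The main obstacle is the careful bookkeeping in the integration-by-parts step — tracking precisely which face and corner terms vanish because $\Delta_N$ is supported away from the lower faces — together with the (standard but genuine) technicalities caused by points on $\partial[0,1]^q$ and by possible discontinuities of $f$, both of which are disposed of by the usual interior-approximation argument; alternatively one may simply cite \cite{Niederreiter}, Theorem~2.11.
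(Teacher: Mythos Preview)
Your argument is the classical Zaremba--Hlawka derivation and is essentially correct; the only delicate points are the boundary and discontinuity technicalities you flag, and those are indeed handled by the standard interior-approximation device you mention. The paper, however, does not prove this lemma at all: it is stated as the well-known Koksma--Hlawka inequality and simply referred to \cite{Niederreiter}, Theorem~2.11, so your write-up goes well beyond what the paper itself provides. If your goal is parity with the paper, the one-line citation suffices; if you want a self-contained appendix, your sketch is the right one.
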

We now prove Theorem \ref{detalt}. To simplify the notation we put again $n=n_1=n_2=n_3$, but the proof can be completed along the same lines if the $n_i, i=1,2,3,$ differ.
\begin{proof}[Proof of Theorem \ref{detalt}]
The local alternative is given by
\[
y_{k_1,k_2,k_3}=f_n\left(k_1\tau_1,k_2\tau_2,k_3\tau_3\right)+\varepsilon_{k_1,k_2,k_3}
\]
with 
\[
f_n(t,r_2,r_3)=f_0(t,r_2,r_3)+\frac{\delta(t,r_2,r_3)}{n^{\beta}}
\]
for $\beta>0$.
Our test statistic $\mathcal{F}_n(s,t,r_2,r_3)$ is therefore defined as
\begin{align*}
\mathcal{F}_n(s,t,r_2,r_3)
&=\sqrt{\tau_1\tau_2\tau_3}
\sum_{l_1=1}^{\lfloor ns\rfloor}\sum_{l_2=1}^{n}\sum_{l_3=1}^{n}\varepsilon_{k_1,k_2,k_3}
\varphi\left(t-k_1\tau_1,r_2-k_2\tau_2,r_3-k_3\tau_3\right)\\&
+\frac{\sqrt{\tau_1\tau_2\tau_3}}{n^{\beta}}\sum_{l_1=1}^{\lfloor ns\rfloor}\sum_{l_2=1}^{n}\sum_{l_3=1}^{n}\delta(k_1\tau_1,k_2\tau_2,k_3\tau_3)\\&\hspace{4.1cm}
\varphi\left(t-k_1\tau_1,r_2-k_2\tau_2,r_3-k_3\tau_3\right)\\&
\eqqcolon T_n^{(1)}(s,t,r_2,r_3)+T_n^{(2)}(s,t,r_2,r_3).
\end{align*}
$T_n^{(1)}(s,t,r_2,r_3)$ equals $\mathcal{F}_n(s,t,r_2,r_3)$ under the null hypothesis which converges to the process $\mathcal{F}(s,t,r_2,r_3)$ of Theorem~\ref{FCLTNull} for $n\to\infty$.

By assumption on the sampling periods we obtain for the second process
\begin{align*}
&T_n^{(2)}(s,t,r_2,r_3)
=\frac{1}{n^{\beta-3/2}}\frac{1}{\sqrt{\overline{\tau}_1\overline{\tau}_2\overline{\tau}_3}}\frac{\overline{\tau}_1\overline{\tau}_2\overline{\tau}_3}{n^3}\\&\qquad
\sum_{k_1=1}^{\lfloor ns\rfloor}\sum_{k_2=1}^{n}\sum_{k_3=1}^{n}\delta\left(k_1\frac{\overline{\tau}_1}{n},k_2\frac{\overline{\tau}_2}{n},k_3\frac{\overline{\tau}_3}{n}\right)
\varphi\left(t-k_1\frac{\overline{\tau}_1}{n},r_2-k_2\frac{\overline{\tau}_2}{n},r_3-k_3\frac{\overline{\tau}_3}{n}\right)+o(1).
\end{align*}
We now fix $\beta=3/2$ and set
\begin{align*}
\varphi_{\delta}(t,r_2,r_3,z_1,z_2,z_3)=\delta(z_1,z_2,z_3)\varphi(t-z_1,r_2-z_2,r_3-z_3).
\end{align*}
If we can show that
\begin{align}\label{Tn2}
&\sup_{s\in[s_0,1]}\sup_{t\in[0,\overline{\tau}_1],\atop r_2\in[0,\overline{\tau}_2],r_3\in[0,\overline{\tau}_3]}
\left|
\frac{\overline{\tau}_1\overline{\tau}_2\overline{\tau}_3}{n^3}
\sum_{k_1=1}^{\lfloor ns\rfloor}\sum_{k_2=1}^{n}\sum_{k_3=1}^{n}\varphi_{\delta}\left(t,r_2,r_3,k_1\frac{\overline{\tau}_1}{n},k_2\frac{\overline{\tau}_2}{n},k_3\frac{\overline{\tau}_3}{n}\right)\right.\notag\\&\left.
\hspace{4.1cm}-
\int_0^{s\overline{\tau}_1}\int_0^{\overline{\tau}_2}\int_0^{\overline{\tau}_3}
\!\varphi_{\delta}\left(t,r_2,r_3,z_1,z_2,z_3\right)\,\mathrm{d}z_3\mathrm{d}z_2\mathrm{d}z_1\right|
\end{align}
tends to zero as $n\to\infty$ the assertion follows, since uniform convergence always implies convergence in the Skorohod topology. 

If $\delta(t,r_2,r_3)$ is continuous we can proceed in an analogous way as in the proof of Theorem~3 of \cite{PawlakSteland}. Thus, it remains to treat the case that
$\delta(t,r_2,r_3)$ is of bounded variation in the sense of Hardy and Krause. Our aim is to apply the Hwlaka-Koksma inequality of Lemma~\ref{HKinequality}. As this inequality is formulated for integrals over the unit cube we first observe that
\begin{align*}
&\int_0^{s\overline{\tau}_1}\int_0^{\overline{\tau}_2}\int_0^{\overline{\tau}_2}
\!\varphi_{\delta}\left(t,r_2,r_3,z_1,z_2,z_3\right)\,\mathrm{d}z_3\mathrm{d}z_2\mathrm{d}z_1\\&
\qquad=
\int_0^{1}\int_0^{1}\int_0^{1}
\!s\overline{\tau}_1\overline{\tau}_2\overline{\tau}_3\varphi_{\delta}\left(t,r_2,r_3,s\overline{\tau}_1z_1,\overline{\tau}_2z_2,\overline{\tau}_3z_3\right)\,\mathrm{d}z_3\mathrm{d}z_2\mathrm{d}z_1.
\end{align*}
Put
$
\left(x_{k_1},x_{k_2},x_{k_3}\right)=\left(\frac{k_1\overline{\tau}_1}{n},\frac{k_2\overline{\tau}_2}{n},\frac{k_3\overline{\tau}_3}{n}\right)
$
and
$
\left(\widetilde{x}_{k_1},\widetilde{x}_{k_2},\widetilde{x}_{k_3}\right)=\left(\frac{k_1}{ns},\frac{k_2}{n},\frac{k_3}{n}\right),
$
for $\left(k_1,k_2,k_3\right)\in\{1,\ldots,\lfloor ns\rfloor\}\times\{1,\ldots,n\}\times\{1,\ldots,n\}$ and $s\geq s_0>0$. Then we obtain
\begin{align*}
&\frac{\overline{\tau}_1\overline{\tau}_2\overline{\tau}_3}{n^3}
\sum_{k_1=1}^{\lfloor ns\rfloor}\sum_{k_2=1}^{n}\sum_{k_3=1}^{n}\varphi_{\delta}\left(t,r_2,r_3,k_1\frac{\overline{\tau}_1}{n},k_2\frac{\overline{\tau}_2}{n},k_3\frac{\overline{\tau}_3}{n}\right)\\&
\qquad=
\frac{1}{sn^3}
\sum_{k_1=1}^{\lfloor ns\rfloor}\sum_{k_2=1}^{n}\sum_{k_3=1}^{n}s\overline{\tau}_1\overline{\tau}_2\overline{\tau}_3\varphi_{\delta}\left(t,r_2,r_3,s\overline{\tau}_1\widetilde{x}_{k_1},\overline{\tau}_2\widetilde{x}_{k_2},\overline{\tau}_3\widetilde{x}_{k_3}\right).
\end{align*}
Thus we can reformulate (\ref{Tn2}) as
\begin{align}\label{Tn2rewrite}
&\sup_{s\in[s_0,1]}\sup_{t\in[0,\overline{\tau}_1],\atop r_2\in[0,\overline{\tau}_2],r_3\in[0,\overline{\tau}_3]}
\left|
\frac{1}{sn^3}
\sum_{k_1=1}^{\lfloor ns\rfloor}\sum_{k_2=1}^{n}\sum_{k_3=1}^{n}s\overline{\tau}_1\overline{\tau}_2\overline{\tau}_3\varphi_{\delta}\left(t,r_2,r_3,s\overline{\tau}_1\widetilde{x}_{k_1},\overline{\tau}_2\widetilde{x}_{k_2},\overline{\tau}_3\widetilde{x}_{k_3}\right)\right.\\&\notag\left.
\hspace{4cm}-
\int_0^{1}\int_0^{1}\int_0^{1}
\!s\overline{\tau}_1\overline{\tau}_2\overline{\tau}_3\varphi_{\delta}\left(t,r_2,r_3,s\overline{\tau}_1z_1,\overline{\tau}_2z_2,\overline{\tau}_3z_3\right)\,\mathrm{d}z_3\mathrm{d}z_2\mathrm{d}z_1\right|.
\end{align}
An upper bound for this expression without the suprema is
\begin{align*}
&\quad\ \left|
\frac{1}{\lfloor ns\rfloor n^2}
\sum_{k_1=1}^{\lfloor ns\rfloor}\sum_{k_2=1}^{n}\sum_{k_3=1}^{n}s\overline{\tau}_1\overline{\tau}_2\overline{\tau}_3\varphi_{\delta}\left(t,r_2,r_3,s\overline{\tau}_1\widetilde{x}_{k_1},\overline{\tau}_2\widetilde{x}_{k_2},\overline{\tau}_3\widetilde{x}_{k_3}\right)\right.\\&\left.
-
\int_0^{1}\int_0^{1}\int_0^{1}
\!s\overline{\tau}_1\overline{\tau}_2\overline{\tau}_3\varphi_{\delta}\left(t,r_2,r_3,s\overline{\tau}_1z_1,\overline{\tau}_2z_2,\overline{\tau}_3z_3\right)\,\mathrm{d}z_3\mathrm{d}z_2\mathrm{d}z_1
\right|\\&
+
\left|
\left(\frac{1}{sn^3}-\frac{1}{\lfloor ns\rfloor n^2}\right)
\sum_{k_1=1}^{\lfloor ns\rfloor}\sum_{k_2=1}^{n}\sum_{k_3=1}^{n}s\overline{\tau}_1\overline{\tau}_2\overline{\tau}_3\varphi_{\delta}\left(t,r_2,r_3,s\overline{\tau}_1\widetilde{x}_{k_1},\overline{\tau}_2\widetilde{x}_{k_2},\overline{\tau}_3\widetilde{x}_{k_3}\right)
\right|\\&
\eqqcolon S_1+S_2.
\end{align*}
We first consider $S_2$. As both $\delta$ and $\varphi$ are of bounded variation in the sense of Hardy and Krause, $\varphi_{\delta}$ is bounded. Thus, for some constant $C\in\mathbb R$ we have
\[
S_2\leq C\lfloor ns\rfloor n^2\left|\frac{1}{\lfloor ns\rfloor n^2}-\frac{1}{sn^3}\right|
=C\left|1-\frac{\lfloor ns\rfloor}{ns}\right|
\leq C\left(1-\frac{ns-1}{ns}\right)
\leq \frac{C}{ns_0}
\]
which tends to zero as $n\to\infty$, uniformly for all $s\in[s_0,1]$ and $(t,r_2,r_3)\in[0,\overline{\tau}_1]\times[0,\overline{\tau}_2]\times[0,\overline{\tau}_3]$.

To estimate $S_1$ we apply the Hwlaka-Koksma inequality. If we put $K\coloneqq\{1,\ldots,\left\lfloor ns\right\rfloor\}\times\{1,\ldots,n\}\times\{1,\ldots,n\}$ and
\begin{align*}
D^{\star}_{N_s}\left(\widetilde{x}_{k_1},\widetilde{x}_{k_2},\widetilde{x}_{k_3}\right)\coloneqq
D^{\star}_{N_s}\left(\left\{\left(\widetilde{x}_{k_1},\widetilde{x}_{k_2},\widetilde{x}_{k_3}\right):(k_1,k_2,k_3)\in K\right\}\right),
\end{align*}
where $N_s\coloneqq\lfloor ns\rfloor n^2$, we have
\begin{align}\label{HKDisk}
S_1\leq V_{HK}\left(s\overline{\tau}_1\overline{\tau}_2\overline{\tau}_3\varphi_{\delta}\left(t,r_2,r_3,s\overline{\tau}_1\cdot,\overline{\tau}_2\cdot,\overline{\tau}_3\cdot\right),[0,1]^3\right)D^{\star}_{N_s}\left(\widetilde{x}_{k_1},\widetilde{x}_{k_2},\widetilde{x}_{k_3}\right).
\end{align}
By Proposition~11 in \cite{Owen} we can estimate the variation by
\begin{align*}
&\quad\ V_{HK}\left(s\overline{\tau}_1\overline{\tau}_2\overline{\tau}_3\varphi_{\delta}\left(t,r_2,r_3,s\overline{\tau}_1\cdot,\overline{\tau}_2\cdot,\overline{\tau}_3\cdot\right),[0,1]^3\right)\\&
\leq\overline{\tau}_1\overline{\tau}_2\overline{\tau}_3V_{HK}\left(\varphi_{\delta}\left(t,r_2,r_3,\cdot,\cdot,\cdot\right),[0,\overline{\tau}_1]\times[0,\overline{\tau}_2]\times[0,\overline{\tau}_3]\right).
\end{align*}
Since $\delta(t,r_2,r_3)$, by assumption, and $(z_1,z_2,z_3)\mapsto\varphi_{\delta}\left(t,r_2,r_3,s\overline{\tau}_1z_1,\overline{\tau}_2z_2,\overline{\tau}_3z_3\right)$, by a similar argument as in Lemma~\ref{phi_wBVHK}, are of bounded variation in the sense of Hardy and Krause uniformly in $t,r_2,r_3$, we obtain
\begin{align}\label{vhkphidelta}
\sup_{s\in[s_0,1]}\sup_{t\in[0,\overline{\tau}_1],\atop r_2\in[0,\overline{\tau}_2],r_3\in[0,\overline{\tau}_3]}V_{HK}\left(s\overline{\tau}_1\overline{\tau}_2\overline{\tau}_3\varphi_{\delta}\left(t,r_2,r_3,s\overline{\tau}_1\cdot,\overline{\tau}_2\cdot,\overline{\tau}_3\cdot\right),[0,1]^3\right)<\infty.
\end{align}
It remains to verify that the discrepancy $D^{\star}_{N_s}\left(\widetilde{x}_{k_1},\widetilde{x}_{k_2},\widetilde{x}_{k_3}\right)$ is $o(1)$. As for arbitrary $u_1,u_2,u_3\in[0,1)$ we have $\lfloor nsu_1\rfloor$ points $\widetilde{x}_{k_1}$ with $\widetilde{x}_{k_1}<u_1$, $\lfloor nu_2\rfloor$ points $\widetilde{x}_{k_2}$ with $\widetilde{x}_{k_2}<u_2$, and $\lfloor nu_3\rfloor$ points $\widetilde{x}_{k_3}$ with $\widetilde{x}_{k_3}<u_3$ we obtain
\begin{align*}
D^{\star}_{N_s}&\left(\widetilde{x}_{k_1},\widetilde{x}_{k_2},\widetilde{x}_{k_3}\right)
=
\sup_{0\leq u_1,u_2,u_3<1}\left|\frac{\lfloor nsu_1\rfloor\lfloor nu_2\rfloor\lfloor nu_3\rfloor}{\lfloor ns\rfloor n^2}-u_1u_2u_3\right|\\&
=
\sup_{0\leq u_1,u_2,u_3<1}\left|\frac{(nsu_1-\varepsilon_1)(nu_2-\varepsilon_2)(nu_3-\varepsilon_3)-(ns-\varepsilon)n^2u_1u_2u_3}{(ns-\varepsilon)n^2}\right|
\end{align*}
for appropiately chosen $\varepsilon,\varepsilon_1,\varepsilon_2,\varepsilon_3\in[0,1)$. This finally leads to
\begin{align}\label{diskpart}
D^{\star}_{N_s}\left(\widetilde{x}_{k_1},\widetilde{x}_{k_2},\widetilde{x}_{k_3}\right)
\leq\frac{4n^2+3n+1}{(ns-1)n^2}
\leq\frac{4n^2+3n+1}{(ns_0-1)n^2}=O\left(\frac{1}{n}\right),
\end{align}
uniformly for all $s\in[s_0,1]$.
Now, combining (\ref{vhkphidelta}) and (\ref{diskpart}) with (\ref{HKDisk}) it follows that $S_1\to0$ as $n\to\infty$, uniformly in $s,t,r_2,r_3$. Thus, assertion (\ref{Tn2}) also follows for the case that $\delta$ is a function of bounded variation in the sense of Hardy and Krause which finally completes the proof.
\end{proof}
\begin{proof}[Proof of Lemma \ref{cltdetalt}]
The assertion directly follows with Theorem~\ref{detalt} and the continuous mapping theorem.
\end{proof}
\begin{proof}[Proof of Theorem \ref{randalt}]
The proof is analogoulsy to the one of Theorem~\ref{detalt}.
\end{proof}
\begin{proof}[Proof of Theorem \ref{basicresultmvbw2}]
As $w$ fulfills the same assumptions as $\varphi$ one can adopt the proof of Theorem~\ref{FCLTNull}.
\end{proof}
\begin{proof}[Proof of Theorem \ref{unknownrefsignal}]
The result easily follows from the fact that under Assumption~1 $ y_{i,j,k}-\overline{y}_{\cdot,j,k} = \varepsilon_{i,j,k}-\overline{\varepsilon}_{\cdot,j,k} $ and thus
\begin{align*}
&\quad\ \ \frac{1}{\sqrt{n_1n_2n_3}}\sum_{i=1}^{\left\lfloor n_1v_1\right\rfloor}\sum_{j=1}^{\left\lfloor n_2v_2\right\rfloor}\sum_{k=1}^{\left\lfloor n_3v_3\right\rfloor}(y_{i,j,k}-\overline{y}_{\cdot,j,k})\\&
%
%
=\frac{1}{\sqrt{n_1n_2n_3}}\sum_{i=1}^{\left\lfloor n_1v_1\right\rfloor}\sum_{j=1}^{\left\lfloor n_2v_2\right\rfloor}\sum_{k=1}^{\left\lfloor n_3v_3\right\rfloor}\varepsilon_{i,j,k}
-\frac{1}{\sqrt{n_1n_2n_3}}\frac{\left\lfloor n_1v_1\right\rfloor}{\left\lfloor n_1s_0\right\rfloor}\sum_{l=1}^{\left\lfloor n_1s_0\right\rfloor}\sum_{j=1}^{\left\lfloor n_2v_2\right\rfloor}\sum_{k=1}^{\left\lfloor n_3v_3\right\rfloor}\varepsilon_{l,j,k}\\&
\Rightarrow\sigma\left(B(v_1,v_2,v_3)-\frac{v_1}{s_0}B(s_0,v_2,v_3)\right)\eqqcolon\sigma B^c(v_1,v_2,v_3),
\end{align*}
as $\min_{1\leq i\leq 3}n_i\to\infty$. Now one may argue as in the proof of Theorem \ref{FCLTNull}.
\end{proof}



\begin{thebibliography}{99}

\setlength{\parskip}{-1.5ex plus0ex minus1.1ex}

\bibitem[Alexopoulos and Goldsman(2004)]{AlexopoulosGoldsman}
Alexopoulos, C. and Goldsman, D. (2004).
\newblock {To batch or not to batch?}
\newblock { \em {ACM Transactions on Modeling and Computer Simulation}} 14: 76--114.

\bibitem[Ash and Gardner(1975)]{AshGardner}
Ash, R. B. and Gardner, M. F. (1975).
\newblock {\em Topics in Stochastic Processes},
\newblock New York: Academic Press.

\bibitem[Berkes and Morrow(1981)]{BerkesMorrow}
Berkes, I. and Morrow, G. (1981).
\newblock {Strong Invariance Principles for Mixing Random Fields,}
\newblock {\em{Probability Theory and Related Fields}} 57: 15--37.

\bibitem[Bickel and Wichura(1971)]{BickelWichura}
Bickel, P. J. and Wichura, M. J. (1971).
\newblock {Convergence Criteria for Multiparameter Stochastic Processes and some
  Applications,}
\newblock {\em{Annals of Mathematical Statistics}} 42: 1656--1670.

\bibitem[Bradley(2005)]{Bradley}
Bradley, R. C. (2005).
\newblock {\em Introduction to strong mixing Conditions}, Volume~3 of {
  Technical Report},
\newblock Department of Mathematics, Indiana University, Bloomington.

\bibitem[Bulinski and Kaene(1996)]{BulinskiKaene}
Bulinski, A. V. and Kaene, M. S. (1996).
\newblock {Invariance Principle for associated Random Fields,}
\newblock {\em{Journal of Mathematical Sciences}} 81: 2905--2911.

\bibitem[Conley(1999)]{Conley}
Conley, T. G. (1999).
\newblock {GMM Estimation with cross sectional Dependence,}
\newblock {\em{Journal of Econometrics}} 92: 1--45.

\bibitem[Deo(1975)]{Deo}
Deo, C. M. (1975).
\newblock {A functional central Limit Theorem for stationary Random Fields,}
\newblock {\em{Annals of Probability}} 3: 708--715.

\bibitem[Doukhan(1994)]{Doukhan}
Doukhan, P. (1994).
\newblock {\em Mixing: Properties and Examples},
\newblock Lecture notes in statistics, New York: Springer-Verlag.

\bibitem[Driscoll and Kraay(1998)]{DriscollKraay}
Driscoll, J. C. and Kraay, A. (1998).
\newblock {Consistent covariance Matrix Estimation with spatially-dependent
  Panel Data,}
\newblock {\em{Review of Economics and Statistics}} 80: 549--560.

\bibitem[Machkouri et al.(2013)]{MachkouriVonlyWu}
El~Machkouri, M., Voln�, D., and Wu, W. B. (2013).
\newblock {A central Limit Theorem for stationary Random Fields,}
\newblock {\em{Stochastic Processes and their Applications}} 123: 1--14.
  
\bibitem[Hardy(1906)]{Hardy}
Hardy, G. H. (1906).
\newblock {On double Fourier Series and especially those which represent the double Zeta Function with real and incommensurable Parameters,}
\newblock {\em{Quarterly Journal of Mathematics}} 37: 53--79.

\bibitem[Jerri(1977)]{Jerri}
{J}erri, A. J. (1977).
\newblock {{T}he {S}hannon Sampling Theorem - Its various Extensions and
  Applications: A tutorial Review,}
\newblock {\em{{P}roceedings of the {I}{E}{E}{E}}} 65: 1565--1596.

\bibitem[Kelejian and Prucha(2007)]{KelejianPrucha}
Kelejian, H. H. and Prucha, I. R. (2007).
\newblock {HAC Estimation in a spatial Framework,}
\newblock {\em{Journal of Econometrics}} 140: 131--154.

\bibitem[Kim and Sun(2011)]{KimSun}
Kim, M. S. and Sun, Y. (2011).
\newblock {Spatial Heteroskedasticity and autocorrelation consistent Estimation
  of Covariance Matrix,}
\newblock {\em{Journal of Econometrics}} 160: 349--371.

\bibitem[Liu and Wu(2010)]{LiuWu}
Liu, W. and Wu, W. B. (2010).
\newblock {Asymptotics of Spectral Density Estimates,}
\newblock {\em{Econometric Theory}} 26: 1218--1245.

\bibitem[Newey and West(1987)]{NeweyWest}
Newey, W. and West, K. D. (1987).
\newblock {A simple, positive semi-definite, heteroskedasticity and
  autocorrelation consistent Covariance Matrix,}
\newblock {\em{Econometrica}} 55: 703--08.

\bibitem[Niederreiter(1992)]{Niederreiter}
Niederreiter, H. (1992).
\newblock {\em{Random Number Generation and quasi-Monte Carlo Methods}},
\newblock {Philadelphia: Society for Industrial and Applied Mathematics.}

\bibitem[Owen(2005)]{Owen}
Owen, A. B. (2005).
\newblock {Multidimensional Variation for Quasi-Monte Carlo,}
\newblock {\em{International Conference on Statistics in honour of
  Professor Kai-Tai Fang's 65th birthday}}.

\bibitem[Pawlak and Stadtm\"uller(1996)]{PawlakStadtmueller}
Pawlak, M. and Stadtm\"uller, U. (1996).
\newblock {Recovering band-limited Signals under Noise,}
\newblock {\em{IEEE Transactions on Information Theory}} 42: 1425--1438.

\bibitem[Pawlak and Stadtm\"uller(2007)]{PawlakStadtmueller2}
Pawlak, M. and Stadtm\"uller, U. (2007).
\newblock {\em Signal Sampling and Recovery under dependent Errors,}
\newblock {\em{IEEE Transactions on Information Theory}} 53: 2526--2541.

\bibitem[Pawlak and Steland(2013)]{PawlakSteland}
Pawlak, M. and Steland, A. (2013).
\newblock {Nonparametric sequential Signal Change Detection under dependent
  Noise,}
\newblock {\em{IEEE Transactions on Information Theory}} 59: 3514--3531.

\bibitem[Prause(2015)]{Prause}
Prause, A. (2015).
\newblock {\em Sequential nonparametric Detection of high-dimensional Signals under dependent
  Noise,}
\newblock {PhD Thesis, RWTH Aachen University.}

\bibitem[Prause and Steland(2015)]{PrauseSteland}
Prause, A. and Steland, A. (2015).
\newblock {Detecting Changes in spatial-temporal Image Data based on quadratic Forms},
\newblock in {\em Stochastic Models, Statistics and Their Applications,}
\newblock Steland, A., Szajowski, K., and Rafajlowicz, E.,
\newblock Heidelberg: Springer Proceedings in Mathematics and Statistics, 139--147.

\bibitem[Prause and Steland(2016)]{PrauseSteland2016}
Prause, A. and Steland, A. (2016).
\newblock {Consistent Estimation of the asymptotic Variance of a correlated Random Field,}
\newblock {\em{Submitted.}}

\bibitem[Sard(1963)]{Sard}
Sard, A. (1963).
\newblock {\em{L}inear Approximation},
\newblock {{Number~9 in Mathematical Surveys, American Mathematical Society.}}

\bibitem[Straf(1972)]{Straf}
Straf, M. L. (1972).
\newblock {Weak Convergence of stochastic Processes with several Paramters.}
\newblock {\em{Proceedings of the Sixth Berkeley Symposium on Mathematical Statistics and Probability}
  2: 187--221.}

\bibitem[Wang and Woodroofe(2013)]{WangWoodroofe}
Wang, Y. and Woodroofe, M. (2013).
\newblock {A new Condition for the Invariance Principle for stationary Random
  Fields,}
\newblock {\em{Statistica Sinica}} 23: 1673--1696.

\bibitem[Wichura(1969)]{Wichura}
Wichura, M. J. (1969).
\newblock {Inequalities with Applications to the weak Convergence of Random
  Processes with multi-dimensional Time Parameters,}
\newblock {\em{The Annals of Mathematical Statistics}} 40: 681--687.
  
\bibitem[Wu(2009)]{Wu}
Wu, W. B. (2009).
\newblock {Recursive Estimation of time-average Variance Constants,}
\newblock {\em{The Annals of Applied Probability}} 19: 1529--1552.

\bibitem[Young(1917)]{Young}
Young, W. H. (1917).
\newblock {On multiple Integration by Parts and the second Theorem of the Mean,}
\newblock {\em{Proceedings of the London Mathematical Society}}
  s2-16: 273--293.


\end{thebibliography}
\end{document}